\title{Optimal control of martingales in a radially symmetric environment}
\author{Alexander M.\ G.\ Cox}
\address{alexander, Bath, U.K.}
\email{a.m.g.cox@bath.ac.uk}
\author{Benjamin A.\ Robinson}
\address{Universit\"at Wien, Vienna, Austria}
\email{ben.robinson@univie.ac.at}
\date{\today}
\thanks{BR is supported by a scholarship from the EPSRC Centre for Doctoral Training in Statistical Applied Mathematics at Bath (SAMBa), under the project EP/L015684/1, and by the Austrian Science Fund (FWF) project Y782-N25.}
\begin{document}
	\maketitle
	
\begin{abstract}
	We study a stochastic control problem for continuous multidimensional martingales with fixed quadratic variation. In a radially symmetric environment, we are able to find an explicit solution to the control problem and find an optimal strategy. We show that it is optimal to switch between two strategies, depending only on the radius of the controlled process. The optimal strategies correspond to purely radial and purely tangential motion. It is notable that the value function exhibits smooth fit even when switching to tangential motion, where the radius of the optimal process is deterministic. Under sufficient regularity on the cost function, we prove optimality via viscosity solutions of a Hamilton-Jacobi-Bellman equation. We extend the results to cost functions that may become infinite at the origin. Extra care is required to solve the control problem in this case, since it is not clear how to define the optimal strategy with deterministic radius at the origin. Our results generalise some problems recently considered in Stochastic Portfolio Theory and Martingale Optimal Transport.
\end{abstract}

\section{Introduction}

In this paper we study a stochastic control problem for continuous multidimensional martingales with fixed quadratic variation. We work in a radially symmetric environment, where we are able to find optimal strategies and give the value function explicitly. We find that an optimal strategy is to switch between two behaviour regimes depending only on the current radius of the controlled martingale. Under one of the optimal strategies, which we will call \textit{tangential motion}, the controlled martingale has a deterministically increasing radius. This property leads to two notable features of the control problem. First, we make the observation that the value function exhibits smooth fit everywhere. When it is optimal to switch into the regime of tangential motion, continuous fit is sufficient to uniquely specify the value function, since the radius is deterministic here. Therefore it is surprising that smooth fit holds. Moreover, it is not obvious how to define tangential motion at the origin. As a result, solving the control problem at the origin requires extra care and we only find approximate optimisers here. We will see that the value function can remain finite when the cost function is allowed to be infinite at the origin. Under a particular growth condition on the cost function, approximation arguments break down and it is necessary to understand how to define tangential motion at the origin.

\subsection{Problem statement}

Fix a domain $D \subseteq \RR^d$, for some $d \geq 2$. We study the control problem of finding
\begin{equation}
	\inf_{\PP}\EE^{\PP} \left[\int_0^\tau f(X_t) \D t + g(X_\tau) \right],
\end{equation}
where $\tau$ is the first exit time of $X$ from $D$, and the infimum is taken over a set of probability measures under which $X$ is a continuous martingale with quadratic variation given by
\begin{equation}
	\D \, \langle X \rangle_t = \D t.
\end{equation}
We specialise to the \emph{radially symmetric} case, taking $D$ to be a $d$-dimensional ball, $f$ a function of the form $f(x) = \tilde{f}(\abs{x})$, and $g$ constant --- without loss of generality, we will assume that $g \equiv 0$. This structure allows us to work with the radius of the controlled processes.

\subsection{Related literature}
Problems of stochastic control for martingales with fixed quadratic variation have appeared recently in the context of Stochastic Portfolio Theory, in the two papers \cite{larsson_minimum_2020} and \cite{larsson_relative_2020} of Larsson and Ruf. In \cite{larsson_minimum_2020} the authors consider the problem of finding the greatest almost sure lower bound on the exit time of a martingale from some domain. They apply this control problem in \cite{larsson_relative_2020} to find the minimal time horizon over which relative arbitrage can be achieved for a market with at least two stocks. One of the strategies that we find to be optimal in the present paper is also studied in \cite{larsson_relative_2020}. While the control set in \cite{larsson_minimum_2020} and \cite{larsson_relative_2020} is the same as in the problem that we study, we consider a significantly different class of cost functions. In \cite[Theorem 1.1]{larsson_minimum_2020}, the value function is characterised as a viscosity solution of a Hamilton-Jacobi-Bellman (HJB) equation on a general compact set. In contrast to \cite{larsson_minimum_2020}, the HJB equation \eqref{eq:hjb-intro} for our control problem has no first order term, and our viscosity solution characterisation \Cref{thm:unique-viscosity} requires a strong notion of convexity on the domain. This convexity condition is of course satisfied when the domain is a ball in $\RR^d$; this is the case for the explicit results that we obtain in this paper, and so we do not investigate extending \Cref{thm:unique-viscosity} to more general domains here.

The HJB equation that arises from the control problem in this paper is
\begin{equation}\label{eq:hjb-intro}
	- \frac{1}{2} \inf_{\sigma \in U} \trace\left(D^2 u \sigma \sigma^\top\right) = f,
\end{equation}
where $U := \left\{\sigma \in \RR^{d, d} \colon \trace(\sigma \sigma^\top) = 1\right\}$. We can see this by considering a martingale that is adapted to the natural filtration of some Brownian motion $B$ and has the representation $\D X_t = \sigma_t \D B_t$. Then, under the quadratic variation constraint $\D \, \langle X \rangle_t = \D t$, we have $\sigma_t \in U$, for any $t \geq 0$. This HJB equation takes a similar form to the Black-Scholes-Barenblatt (BSB) equation, as studied in \cite{vargiolu_existence_2001}. Compared with the PDE \eqref{eq:hjb-intro}, the BSB equation has an additional time derivative term, and the infimum can be taken over a more general compact control set. The BSB equation is an HJB equation corresponding to a time-inhomogeneous control problem of the type discussed in Section 3.3 of \cite{touzi_optimal_2013}. In \cite{gozzi_superreplication_2002}, the BSB equation is applied to find a super-hedging strategy for European multi-asset derivatives.

\subsection{Motivation --- martingale optimal transport}\label{sec:mot}

In one dimension, it is well known that any continuous martingale is a time-change of a standard Brownian motion. Martingales with fixed quadratic variation are a natural generalisation of Brownian motion to higher dimensions. Imposing this constraint will allow us to study the structure of the optimal martingales in the control problems that we consider.

Our motivation for studying the problems in this paper comes from a connection with Martingale Optimal Transport (MOT), motivated by the paper \cite{tan_optimal_2013} of Tan and Touzi (see also \cite{lim_optimal_2020}). In \cite{tan_optimal_2013}, the authors formulated the martingale optimal transport problem through penalisation. Formally, MOT is the problem of finding a martingale $(M_t)_{t \in [0,1]}$ with marginal laws $M_0 \sim \lambda$ and $M_1 \sim \mu$ such that the joint distribution minimises some given quantity, for example $\EE\left[ |M_1-M_0|\right]$. In this setting, \cite{tan_optimal_2013} showed that the one-dimensional MOT problem could be reformulated as an optimal stopping problem for Brownian motion, where the connection to the Brownian motion is established by requiring $B_0 \sim \lambda$ and $B_{\tau} \sim \mu$ for some stopping time $\tau$. The penalisation arises as a cost function of the form $H(B_\tau)$ appearing in the optimisation criterion, or equivalently, via an It\^o argument, a cost function of the form $\int_0^\tau H''(B_s) \D s$.

For the MOT problem in general dimensions, the structure of the transports is much more complex than in the one-dimensional case, as described by De March \cite{de_march_local_2018}, De March and Touzi \cite{de_march_irreducible_2019}, and Ghoussoub, Kim and Lim \cite{ghoussoub_structure_2019}. Lim \cite{lim_optimal_2020} also considers MOT in higher dimensions, focussing on radially symmetric marginals, as we do here. 

We provide the following motivating example, in which we heuristically pass from an MOT problem to a control problem of the form that we study in this paper.

\begin{ex}
	Fix $d \in \NN$ and let $\lambda, \mu$ be radially symmetric probability measures on $\RR^d$. Consider the MOT problem of finding
	\begin{equation}
		MT(\lambda, \mu) := \inf_\tau \inf_{\substack{M \; \text{martingale}\\M_0 \sim \lambda, \; M_1 \sim \mu}} \EE [(H(M_\tau) - H(M_0))^2],
	\end{equation}
	for some function $H: \RR^d \to \RR$ depending only on the radius, where $\tau$ is a stopping time.
	
	By penalisation, we can rewrite
	\begin{equation}\label{eq:mot-penalised}
		\begin{split}
			MT(\lambda, \mu) & = \inf_\tau \inf_{\substack{M\; \text{martingale}\\M_0 \sim \lambda}} \sup_{f \in C_b(\RR^d)} \left\{ \EE [|H(M_\tau) - H(M_0)|^2 + f(M_\tau) ] - \int f(x) \mu (\D x)\right\}\\
			& =  \sup_{f \in C_b(\RR^d)} \left\{ \inf_\tau \inf_{\substack{M\; \text{martingale}\\M_0 \sim \lambda}} \EE [|H(M_\tau) - H(M_0)|^2 + f(M_\tau) ]  - \int f(x) \mu (\D x)\right\},
		\end{split}
	\end{equation}
	assuming that a min-max principle holds.
	
	Let $\mathcal U := \{U\text{-valued progressive processes}\}$ --- in particular, for any $\sigma \in \mathcal U$, we have $\trace(\sigma_t \sigma^\top_t) = 1$, for all $t \geq 0$. We restrict to martingales of the form $X^\sigma_t = X^\sigma_0 + \int_0^t \sigma_s \D B_s$, for some $\sigma \in \mathcal U$, with $X^\sigma_0 \sim \lambda$, and $B$ a $d$-dimensional Brownian motion. For a fixed $f$ and $\tau$, we are then interested in the problem of finding
	\begin{equation}
		V(\lambda) := \inf_{\substack{\sigma \in \mathcal{U}\\X^\sigma_0 \sim \lambda}}\EE[|H(X^\sigma_\tau) - H(X^\sigma_0)|^2 + f(X^\sigma_\tau)].
	\end{equation}
	By radial symmetry of the problem, we need only consider radially symmetric test functions $f$. Applying It\^o's formula, and writing the radius process as $R^\sigma_t = |X^\sigma_t|$, $t \geq 0$, we find functions $\tilde h, \tilde g: \RR_+ \to \RR$ such that
	\begin{equation}\label{eq:control-mot}
		V(\lambda) = \inf_{\substack{\sigma \in \mathcal{U}\\X^\sigma_0 \sim \lambda}}\EE\left[\int_0^\tau \left\{\tilde{h}(R^\sigma_t) \trace (\sigma_t \sigma^\top_t) + \tilde g(R^\sigma_t) \trace(X^\sigma_t {X^\sigma}^\top_t \sigma_t \sigma^\top_t)\right\}\D t\right].
	\end{equation}
	The corresponding HJB equation for this control problem is then
	\begin{equation}\label{eq:hjb-mot}
		\inf_{\sigma \in U} \trace(\sigma \sigma^\top [ \partial_{xx} u(x) - \tilde g(|x|) xx^\top ]) = \tilde h(|x|).
	\end{equation}
	
	In the present paper, we specialise to the case of $\tilde g \equiv 0$, so that we arrive at a control problem of the form
	\begin{equation}\label{eq:control-simple}
		\inf_{\substack{\sigma \in \mathcal{U}\\X^\sigma_0 \sim \lambda}}\EE\left[\int_0^\tau \tilde{h}(R^\sigma_t)\D t\right].
	\end{equation}
	In order to solve this control problem, we appeal to the theory of viscosity solutions for the HJB equation \eqref{eq:hjb-mot} with $\tilde g \equiv 0$ in \Cref{sec:proof-optimality}. In \Cref{app:dpp-comparison}, we prove a comparison result for this PDE, which to our knowledge is not contained in standard results in the literature.
	
	For a general $\tilde g$, the equation \eqref{eq:hjb-mot} does not fit into the framework of \Cref{app:dpp-comparison}. However, by modifying our proof of \Cref{thm:unique-viscosity}, we believe that it would be possible to extend these results to \eqref{eq:hjb-mot} in full generality.
	Moreover, we expect that an optimal control for \eqref{eq:control-mot} is to switch between radial and tangential motion, as we prove for \eqref{eq:control-simple}; see \Cref{rem:general-control}.
\end{ex}

Herein, we focus specifically on the challenge of understanding the optimal martingales for the simplified control problem \eqref{eq:control-simple}, as a first step towards the full problem of finding $V$ in \eqref{eq:control-mot} and the penalised value $MT(\lambda, \mu)$ in \eqref{eq:mot-penalised} above.

\subsection{Optimal behaviour}
A key result of this paper is to show that, under sufficient regularity on the cost function, an optimal strategy can be constructed by switching between the following two behaviours. We say that a martingale $X$ follows \emph{radial motion} if it can be written as 
\begin{equation}
	X_t = \frac{x}{\abs{x}}W_t, \quad t \geq 0,
\end{equation}
for some $x \in D \setminus \{0\}$ and $W$ a one-dimensional Brownian motion, so that $X$ acts as a one-dimensional Brownian motion on the line connecting its starting point to the origin, as illustrated in \Cref{fig:rad-tang}. Such a process maximises the expected time spent close to the origin and is therefore optimal for any cost function whose radial part is monotonically increasing. On the other hand, we say that $X$ follows \emph{tangential motion} in dimension two if it solves the SDE
\begin{equation}\label{eq:intro-tangential-sde}
	\D X_t = \frac{X^\perp_t}{\abs{X_t}} \D W_t,
\end{equation}
where $W$ is a one-dimensional Brownian motion, and we define a vector $x^\perp \in \RR^2$ orthogonal to $x \in D \setminus \{0\}$ by $x^\perp = (-x_2, x_1)^\top$. In higher dimensions, tangential motion has analogous behaviour to the two-dimensional case, but the formal definition is more delicate, as described in \Cref{def:tangential}. As we discuss below and illustrate in \Cref{fig:rad-tang}, tangential motion has a deterministically increasing radius. As a result, such a process minimises the expected time spent close to the origin and is optimal for cost functions with monotonically decreasing radial part. Tangential motion in dimension two already appears in \cite{fernholz_volatility_2018} and \cite{larsson_relative_2020}, and it is shown to be optimal for a simple example of a control problem in \cite[Example 1.6]{larsson_minimum_2020} --- see also \Cref{ex:step-decr} below.

In our main results, \Cref{prop:radial-symmetric-value} and \Cref{thm:value-relaxed}, we solve the control problem explicitly and give conditions under which switching between radial and tangential motion is optimal. Our approach to proving \Cref{prop:radial-symmetric-value} is first to construct a candidate value function by making the ansatz that a switching strategy of the above form is optimal, and then to verify that this is the correct value function by using the theory of viscosity solutions for the HJB equation. In \Cref{thm:value-relaxed} we extend \Cref{prop:radial-symmetric-value} by a series of approximation arguments. We will find optimal controls in a weak sense and show, in all but one case, that weak and strong formulations are equivalent.

\subsection{A switching problem with smooth fit}
The most interesting optimal behaviour is so-called tangential motion, described by the SDE \eqref{eq:intro-tangential-sde}. This behaviour is studied by Fernholz, Karatzas and Ruf in \cite{fernholz_volatility_2018}, and by Larsson and Ruf in \cite{larsson_relative_2020}. It is observed that, for a solution $X$ of \eqref{eq:intro-tangential-sde} with initial condition $x \in D$, the radius of $X$ is the deterministically increasing process
\begin{equation}
	t \mapsto \abs{X_t} = \sqrt{\abs{x} + t}.
\end{equation}
This property implies optimality of tangential motion for cost functions whose radial part is monotonically decreasing. For more general cost functions, we will show that an optimal strategy can be found by switching between this behaviour and radial motion, as described above. Due to the radial symmetry of the control problem, the optimal strategy at any time depends only on the current radial position of the controlled process. Therefore we have a one-dimensional switching problem between two regimes. It is common in such switching problems to observe smooth fit criteria on the free boundaries, and indeed, we do observe such behaviour in our case. Notably, we can only justify smooth fit \emph{heuristically} in one switching regime. 
  To determine the optimal radius at which to switch from radial to tangential motion, we only need to impose continuous fit for the value function. However, interestingly, smooth fit also holds at such a point. We discuss this phenomenon further in \Cref{sec:smooth-fit}.

\subsection{Behaviour at the origin}
Another interesting feature of the control problem is the optimal behaviour at the origin. In the case that the cost function has increasing radial part at the origin, then we can define an optimal strategy by taking a one-dimensional Brownian motion in any given direction, analogously to radial motion described above. On the other hand, in the case of a decreasing cost at the origin, it seems that tangential motion, as defined by the SDE \eqref{eq:intro-tangential-sde}, should be optimal, since such a process minimises the expected time spent at the origin. However, it is not immediately clear how to define tangential motion started from the origin.

In \cite{fernholz_volatility_2018}, the authors prove that a weak solution of \eqref{eq:intro-tangential-sde} exists in dimension $d = 2$, with initial condition $X_0 = 0$. Using this result, we can solve the control problem in a weak sense, which we define in \Cref{sec:problem-formulation}, following \cite{el_karoui_capacities_2013-1}. Under sufficient conditions on the cost, El Karoui and Tan show in \cite{el_karoui_capacities_2013-1} that this weak formulation is equivalent to a strong formulation of the control problem. However, we do not always assume that such conditions hold. In \Cref{sec:exploding-cost}, we will recover this equivalence result for more general costs, using some approximation arguments, but we do not identify an optimal control started from the origin. Under the particular growth regime where the cost $f$ satisfies
\begin{equation}
	\int_0^r \tilde{f}(s) \D s = \infty \qandq \int_0^r s \tilde{f}(s) \D s < \infty, \quad \text{for all} \quad r > 0,
\end{equation}
our approximation schemes diverge, but the weak solution of \eqref{eq:intro-tangential-sde} gives rise to a finite expected cost. This leads us to consider the existence of strong solutions of \eqref{eq:intro-tangential-sde} started from the origin. We leave open this question of existence of strong solutions, and the question of equivalence of weak and strong control problems, and we will address these issues in a forthcoming paper.

\subsection{Outline of the paper}
In \Cref{sec:problem-formulation}, we define the control problems precisely and prove some preliminary results on properties of the controlled processes and equivalence of the strong and weak control problems.

In \Cref{sec:discontinuities}, we present two motivating examples and introduce radial and tangential motion as candidates for the optimal behaviour. In the simple examples of step function costs, we prove directly that either radial or tangential motion is optimal, depending on whether the step function is increasing or decreasing.

In \Cref{sec:general-rad-symm}, we solve the control problem explicitly for radially symmetric cost functions that are sufficiently regular. We first conjecture that an optimal strategy is to switch between radial and tangential motion, and we heuristically derive a one-dimensional switching problem in \Cref{sec:switching-problem}. We then identify optimal switching points in \Cref{sec:switching-points} and go on to construct a candidate for the value function in \Cref{sec:construction}. We observe smooth fit at the points at which the conjectured optimal radius process switches to the deterministic regime of tangential motion, and we discuss this surprising phenomenon in \Cref{sec:smooth-fit}. In \Cref{sec:proof-optimality}, we prove \Cref{prop:radial-symmetric-value}, which states that the value function is equal to the candidate function that we constructed and that switching between radial and tangential motion is indeed optimal. To prove this result, we show that the candidate function satisfies a Hamilton-Jacobi-Bellman equation in the viscosity sense, and use the facts that the value function is also a viscosity solution of this equation, and that such a solution is unique. The required results on viscosity solutions are presented without proof in \Cref{app:dpp-comparison}.

In \Cref{sec:exploding-cost}, we relax the regularity conditions on the cost function, allowing for cost functions that are infinite at the origin. In this case, the theory of viscosity solutions in no longer applicable, and we do not know a priori that the weak and strong formulations of the control problem are equivalent. Under various growth conditions on the cost function, we use approximation arguments to show that all formulations of the control problem are equivalent and that the value function takes the same form as the candidate constructed in \Cref{sec:construction}. We also identify the conditions under which the value function remains finite. These results are summarised in \Cref{thm:value-relaxed}. We conclude by discussing one exceptional case under a specific growth condition in dimension $d = 2$. Here we find the weak value function, but equivalence with the strong formulation of the control problem is left open.

\section{Problem formulation}\label{sec:problem-formulation}
Fix $d \in \NN$. We introduce the control set
\begin{equation}
	U := \left\{\sigma \in \RR^{d,d} \colon \trace(\sigma\sigma^\top) = 1\right\}.
\end{equation}
Let $D \subset \RR^d$ be a domain and define the functions $f:D \to \RR$ and $g:\partial D \to \RR$, which we call the \emph{running cost} and \emph{boundary cost}, respectively. We make the following assumptions.

\begin{ass}\label{ass:main}
Suppose that
	\begin{enumerate}
		\item The domain $D$ is bounded;
		\item The cost functions $f$ and $g$ are upper semicontinuous;
		\item The running cost $f$ is bounded above; i.e. $f \leq M$, for some $M \geq 0$;
		\item The boundary cost $g$ is bounded above; i.e. $g \leq K$ for some $K \geq 0$.
	\end{enumerate}	
\end{ass}
Note that, in later sections, we will also impose radial symmetry on the problem.

We now introduce both weak and strong formulations of the control problem. We will show in \Cref{prop:weak-strong} that, under \Cref{ass:main}, the weak and strong formulations are equivalent. In \Cref{sec:exploding-cost}, we will relax our assumptions, so that \Cref{prop:weak-strong} no longer applies. However, we will show that equivalence of weak and strong formulations holds in all but one exceptional case.

\subsection{Strong formulation}
The strong formulation of the control problem is to find the strong value function $v^S: D \to \RR$, which we now define as in \cite{touzi_optimal_2013}. In order to define the value function, we introduce the set of controls, which will be $U$-valued processes, and we describe the dynamics of the controlled martingales via the stochastic integral \eqref{eq:sde} below.

Let $(\Omega_0, \mathcal{F}, \PP_0)$ be a probability space on which a $d$-dimensional Brownian motion $B$ is defined with natural filtration $\FF = (\mathcal{F}_t)_{t\geq 0}$.
\subsubsection*{Control:}
Define the set of controls $\mathcal{U}:=\left\{U\text{-valued }\FF\text{-progressively measurable processes} \right\}$.
\subsubsection*{Dynamics:}
For any $x \in D$ and $\nu = (\nu_t)_{t\geq0} \in \mathcal{U}$, define $X^\nu$ by the stochastic integral
\begin{equation}\label{eq:sde}
	X^\nu_t = x + \int_0^t \nu_s \D B_s, \quad t \geq 0.
\end{equation}
\begin{ex}\label{ex:lip-markov-strong}
	Let $\sigma: D \to U$ be Lipschitz. Then there is a unique strong solution $X^\sigma$ of the SDE
	\begin{equation}\label{eq:sde-lip}
		\D X_t = \sigma(X_t) \D B_t, \quad X_0 = x.
	\end{equation}
	Define $\nu_t = \sigma(X^\sigma_t)$, for all $t \geq 0$. Then $\nu \in \mathcal{U}$ and, for any $t \geq 0$, $X^\sigma_t = x + \int_0^t \nu_s \D B_s$.
\end{ex}

\subsubsection*{Value function:}
We define the \emph{strong value function} $v^S: D \to \RR$ by
\begin{equation}\label{eq:strong-value}
	v^S(x) := \inf_{\nu \in \mathcal{U}}\EE^x\left[\int_0^{\tau}f(X^\nu_s) \D s + g(X^\nu_{\tau^\nu})\right],
\end{equation}
where $\tau$ is the exit time of $X^\nu$ from the domain $D$, and $\EE^x$ denotes expectation with respect to the law of $X^\nu$, conditioned on $X^\nu_0 = x$.

\begin{remark}
	Note that, for any $\nu \in \mathcal{U}$, the quadratic variation of a controlled martingale $X^\nu$ is given by
	\begin{equation}
		\langle X^\nu \rangle_t = \int_0^t \trace(\nu_s \nu_s^\top) \D s = t,
	\end{equation}
	for any $t \geq 0$, by the definition of the control set $U$.
\end{remark}
	
\begin{defn}
	We say that a process $X$ has \emph{unit quadratic variation} if its quadratic variation is given by
	\begin{equation}
		\langle X \rangle_t = t, \quad \text{for all} \quad t \geq 0.
	\end{equation}
\end{defn}

A martingale with unit quadratic variation has the property that the expected exit time of the martingale from a ball is fixed. This gives a bound on the expected exit time from the domain $D$, as given in \cite[Lemma 1.7]{larsson_minimum_2020}, as follows.

\begin{prop}\label{rem:exit-time}
	Let $X$ be a continuous martingale with initial condition $X_0 = x \in D$, and suppose that $X$ has unit quadratic variation. Fix $R > 0$ and denote by $\tau_R$ the first exit time from $B_R(0) := \{y \in \RR^d \colon \abs{y} < R\}$. Then
	\begin{equation}\label{eq:exit-time}
		\EE^x [\tau_R] = R^2 - \abs{x}^2,
	\end{equation}
	Moreover, defining $\tau$ to be the first exit time from $D$, we have the bound
	\begin{equation}\label{eq:dom-exit-time}
		\EE^x[\tau] \leq \diam(D)^2 < \infty.
	\end{equation}
\end{prop}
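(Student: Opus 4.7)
The plan is to apply It\^o's formula to $\phi(x) = |x|^2$ and exploit the unit quadratic variation hypothesis to convert the problem into a standard optional stopping calculation.

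First I would compute $d |X_t|^2$. Since $\partial_{ij}\phi(x) = 2\delta_{ij}$, It\^o's formula gives
\begin{equation*}
	|X_t|^2 = |x|^2 + 2 \int_0^t X_s \cdot \D X_s + \sum_{i} \langle X^i \rangle_t = |x|^2 + 2 \int_0^t X_s \cdot \D X_s + t,
\end{equation*}
using $\sum_i \D\langle X^i\rangle_t = \trace(\D\langle X\rangle_t) = \D t$. Hence $M_t := |X_t|^2 - t - |x|^2$ is a continuous local martingale starting from zero.

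Next I would apply optional stopping at the bounded stopping time $\tau_R \wedge n$. Because $|X_{s}| \leq R$ for all $s \leq \tau_R$, the stopped local martingale $M_{\cdot \wedge \tau_R}$ is bounded on $[0,n]$, hence a true martingale on that interval. This yields
\begin{equation*}
	\EE^x\bigl[ \tau_R \wedge n \bigr] = \EE^x\bigl[ |X_{\tau_R \wedge n}|^2 \bigr] - |x|^2 \leq R^2 - |x|^2.
\end{equation*}
Letting $n \to \infty$ and using monotone convergence gives $\EE^x[\tau_R] \leq R^2 - |x|^2 < \infty$; in particular $\tau_R < \infty$ almost surely and, by continuity of $X$, $|X_{\tau_R}| = R$. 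Dominated convergence (the integrands are bounded by $R^2$) then upgrades the inequality to the equality $\EE^x[\tau_R] = R^2 - |x|^2$.

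For the second claim, I would simply translate: since $X$ has unit quadratic variation, so does $\tilde{X}_t := X_t - x$, which starts at the origin. Because $D \subseteq B_{\diam(D)}(x)$, the exit time $\tau$ of $X$ from $D$ is dominated by the exit time $\tilde{\tau}_R$ of $\tilde{X}$ from $B_R(0)$ with $R = \diam(D)$. Applying the first part to $\tilde{X}$ gives $\EE^x[\tau] \leq \EE^x[\tilde{\tau}_R] = \diam(D)^2$. I do not foresee a serious obstacle here; the only mild subtlety is justifying the passage from the local martingale property of $M$ to genuine optional stopping, which is resolved by localising at $\tau_R \wedge n$ where $M$ is bounded.
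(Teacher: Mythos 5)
Your proof is correct and follows essentially the same Itô-plus-optional-stopping route as the paper's. The only difference is that you are more careful about the localization at $\tau_R \wedge n$ (which the paper glosses over) and about the translation $\tilde X = X - x$ needed for the second estimate (where the paper silently re-centers the ball at $x$), both of which are worthwhile clarifications but do not change the underlying argument.
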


\begin{proof}
 Applying It\^o's formula to $\abs{X_{\tau_R}}^2$ and taking expectations, we find that
 \begin{equation}
 	\EE^x\left[\abs{X_{\tau_R}}^2\right] - \abs{x}^2 = \EE^x \left[\langle X \rangle_{\tau_R}\right] = \EE^x[\tau_R],
 \end{equation}
 since $X$ is a martingale and has unit quadratic variation. Therefore, by continuity of the paths of $X$, we have
 \begin{equation}
 	\EE^x[\tau_R] = R^2 - \abs{x}^2.
 \end{equation}
Now set $R = \diam(D)$ so that $D \subseteq B_R(x)$. Then the inequality $\tau \leq \tau_R$ holds pointwise and, in particular,
\begin{equation}
	\EE^x[\tau] \leq \EE^x[\tau_R] = \diam(D)^2 < \infty,
\end{equation}
as required.
\end{proof}

\subsection{Weak formulation}
We now introduce the weak formulation of the control problem, following El Karoui and Tan in \cite{el_karoui_capacities_2013-1}. The problem is to find the weak value function $v^W: D \to \RR$, which we define below. In the weak formulation, the controls will take values in a set of probability measures, and the dynamics of the controlled martingales will be described as solutions of a local martingale problem.

Define the space of continuous paths $\Omega := C([0, \infty), \RR^d)$ and denote the set of Borel measurable functions $\nu: \RR_+ \to U$ by $\mathcal{B}(\RR_+, U)$. Then set $\overline{\Omega} = \Omega \times \mathcal{B}(\RR_+, U)$ and denote an element of $\overline{\Omega}$ by $\overline{\omega} = (\omega, u)$. Define the canonical process $\overline{X} = (X, \nu)$ on $\overline{\Omega}$ by $X_t(\overline{\omega}) = \omega_t$, for each $t \geq 0$, and $\nu(\overline{\omega}) = u$. We define the canonical filtration as in \cite{el_karoui_capacities_2013-1}. For $\phi \in C_b(\RR_+ \times U)$, $s \geq 0$, define
\begin{equation}
	M_s(\phi) := \int_0^s \phi(r, \nu_r) \D r.
\end{equation}
Then define the canonical filtration $\overline{\FF} = (\overline{\mathcal{F}}_t)_{t \geq 0}$ by
\begin{equation}
	\overline{\mathcal{F}}_t := \sigma\left\{(X_s, M_s(\phi)) \colon \phi \in C_b(\RR_+ \times U), s \leq t\right\}, \quad t \geq 0.
\end{equation}

\subsubsection*{Control:}
Let $\mathbb{M}$ be the set of probability measures on the set $\overline{\Omega}$. For each $x \in D$, let
\begin{equation}
	\mathbb{M}_x = \left\{\PP \in \mathbb{M} \colon \PP(X_0 = x) = 1\right\}.
\end{equation}
\subsubsection*{Dynamics:}
For $x \in D$, define
\begin{equation}
\begin{split}
	\mathcal{P}_x := \{\PP \in \mathbb{M}_x \, \colon \quad & t \mapsto \phi(X_t) - \phi(X_0) - \frac{1}{2}\int_0^t \trace\left(D^2\phi(X_s)\nu_s \nu_s^\top\right)\D s\\
	& \qquad \text{is a }(\overline{\FF}, \PP)\text{-local martingale for all }\phi\in C^2(\RR^d)\},
\end{split}
\end{equation}
and let $\tau = \inf\left\{t \geq 0 \colon X_t \notin D\right\}$.

\subsubsection*{Value function:}
Define the \emph{weak value function} $v^W: D \to \RR$ by
\begin{equation}\label{eq:weak-value}
	v^W(x) = \inf_{\PP \in \mathcal{P}_x} \EE^\PP\left[\int_0^{\tau}f(X_s) \D s + g(X_{\tau})\right],
\end{equation}
where $\EE^\PP$ denotes expectation with respect to the measure $\PP$.

\begin{remark}\label{rem:weak-control}
	A measure $\PP \in \mathcal{P}_x$ is a solution of a local martingale problem, as defined in Definition 4.5 of \cite[Chapter 5]{karatzas_brownian_1998}. As shown in Problem 4.3 and Proposition 4.6 of \cite[Chapter 5]{karatzas_brownian_1998}, there is a correspondence between solutions of a local martingale problem and weak solutions of an SDE. In our set up, a measure $\PP \in \mathcal{P}_x$ corresponds to a weak solution of an SDE $\D X_t = \sigma_t(X) \D W_t$ with initial distribution $\delta_x$, for some $U$-valued process $\sigma$. To save notation, we may also denote such a weak solution by $X^\sigma$ and refer to $\sigma$ as the control.
\end{remark}

We will now show that, under \Cref{ass:main}, the weak and strong value functions are equal, by referring to Theorem 4.5 of \cite{el_karoui_capacities_2013-1}.

\begin{prop}\label{prop:weak-strong}
	Suppose that \Cref{ass:main} holds. Then the weak and strong formulations of the control problem are equivalent; i.e.\ $v^S = v^W$ in $D$.
\end{prop}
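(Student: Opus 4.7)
The plan is to prove the two inequalities $v^W \leq v^S$ and $v^S \leq v^W$ separately. The first is elementary: every strong control induces a weak control via a pushforward, with the same cost. The second is the substantive direction and follows, as indicated just above the statement, from \cite[Theorem 4.5]{el_karoui_capacities_2013-1} once its hypotheses are verified in our setting.

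\textbf{Pushforward inequality.} Fix $x \in D$ and $\nu \in \mathcal{U}$ on the reference space $(\Omega_0, \mathcal{F}, \PP_0)$. Define $\Phi: \Omega_0 \to \overline{\Omega}$ by $\omega \mapsto (X^\nu(\omega), \nu(\omega))$ and set $\PP := \Phi_\ast \PP_0$. Clearly $\PP(X_0 = x) = 1$. For any $\phi \in C^2(\RR^d)$, It\^o's formula applied to \eqref{eq:sde} gives
\[
    \phi(X^\nu_t) - \phi(x) - \frac{1}{2} \int_0^t \trace\!\left(D^2\phi(X^\nu_s)\nu_s\nu_s^\top\right)\D s = \int_0^t \nabla\phi(X^\nu_s)^\top \nu_s \, \D B_s,
\]
which is a local $\PP_0$-martingale in $\FF$; pushing forward, the analogous expression in the canonical coordinates of $\overline{X}$ is a local $\PP$-martingale in $\overline{\FF}$, so $\PP \in \mathcal{P}_x$. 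The cost functional $\int_0^\tau f(X_s)\,\D s + g(X_\tau)$ depends only on the path component of $\overline X$, so its expectation under $\PP$ equals its expectation under $\PP_0$ when written in terms of $X^\nu$. Taking the infimum over $\nu$ yields $v^W(x) \leq v^S(x)$.

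\textbf{Reverse inequality and main obstacle.} For the substantive direction I would apply \cite[Theorem 4.5]{el_karoui_capacities_2013-1} after checking: (i) $U$ is a compact Borel subset of $\RR^{d,d}$, namely the unit Frobenius-sphere; (ii) the diffusion coefficient $\sigma \mapsto \sigma \sigma^\top$ is continuous in the control and there is no drift term; (iii) $f$ and $g$ are upper semicontinuous and bounded above, which is \Cref{ass:main}; and (iv) the exit time $\tau$ has uniformly bounded expectation, which follows from \Cref{rem:exit-time} together with the observation that for every $\PP \in \mathcal{P}_x$ the canonical $X$ is a continuous local martingale with unit quadratic variation (take $\phi(y) = y_i$ and $\phi(y) = y_i y_j$ in the martingale problem to read off this property). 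The main obstacle is matching their framework, typically formulated on a deterministic horizon, to our exit-time problem: my plan is to truncate at $\tau \wedge T$, apply their equivalence to the truncated problem on $[0,T]$, and then pass to the limit $T \to \infty$, where the exchange of limit and infimum is controlled by $f \leq M$, $g \leq K$ and the a priori bound $\EE[\tau] \leq \diam(D)^2 < \infty$. Combining the two inequalities yields $v^S = v^W$ on $D$.
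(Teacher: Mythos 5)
Your proposal takes essentially the same route as the paper: the crux in both is an application of \cite[Theorem 4.5]{el_karoui_capacities_2013-1} after verifying that the cost functional is upper semicontinuous and admits an upper bound that is uniformly integrable over $\mathcal{P}_x$, with the latter coming from the exit-time estimate of \Cref{rem:exit-time}. Two small remarks on where you diverge. First, the pushforward inequality $v^W \le v^S$ is not needed as a separate step, since the El Karoui--Tan result is cited in the paper as delivering the equality directly (it subsumes the easy direction), so that paragraph is harmless but redundant. Second, the "main obstacle" you flag — a deterministic horizon versus a random exit time — does not arise in the paper's reading of the theorem: the conditions are checked on the single random variable $\Phi(\omega) = \int_0^{\tau(\omega)} f(X_s(\omega))\,\D s + g(X_{\tau(\omega)}(\omega))$ rather than on the integrand, and the El Karoui--Tan framework is applied with $\Phi$ itself as the reward; no truncation at $\tau \wedge T$ and passage to the limit is carried out. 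Your truncation plan would plausibly also work, but it addresses a difficulty the paper does not encounter, and it would force you to re-justify uniform integrability of the tail $\int_{\tau\wedge T}^\tau f\,\D s$ uniformly in the control, which the paper sidesteps by bounding $\Phi \le M\tau + K$ once and for all. Your item (iv), that every $\PP \in \mathcal{P}_x$ makes $X$ a continuous martingale with $\langle X\rangle_t = t$ via the test functions $y_i$ and $y_iy_j$, is a useful explicit justification of a fact the paper invokes implicitly when it cites \Cref{rem:exit-time} under arbitrary $\PP \in \mathcal{P}_x$.
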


\begin{proof}
	We apply Theorem 4.5 of \cite{el_karoui_capacities_2013-1}, which gives conditions for equality of the weak and strong value functions. Define a function $\Phi: \Omega \to \RR$ by
	\begin{equation}
		\Phi(\omega) = \int_0^{\tau(\omega)} f(X_s(\omega)) \D s + g(X_{\tau(\omega)}(\omega)),
	\end{equation}
	and fix $x \in D$. Then, by Theorem 4.5 of \cite{el_karoui_capacities_2013-1}, it is sufficient to show that $\Phi$ is upper semicontinuous and bounded above by some random variable $\xi$ that is uniformly integrable under the family of probability measures $\mathcal{P}_x$.
	
	Under our assumptions, $f: D \to \RR$ and $g: \partial D \to \RR$ are upper semicontinuous and so $\Phi$ is also upper semicontinuous. Since we have also assumed that $f$ and $g$ are bounded above, we have the bound
	\begin{equation}
		\Phi(\omega) \leq M \tau(\omega) + K =: \xi(\omega).
	\end{equation}
	Fix $\PP \in \mathcal{P}_x$ and let $(X, \nu)$ have joint law $\PP$. Then the process $X$ has unit quadratic variation, and so by \Cref{rem:exit-time}, $\EE^\PP [\tau] \leq \diam(D)^2$. Hence
	\begin{equation}
		\EE^\PP [\xi] \leq M \diam(D)^2 + K < \infty,
	\end{equation}
	independently of the choice of measure $\PP$. Therefore $\xi$ is uniformly integrable under $\mathcal{P}_x$.
	
	We apply Theorem 4.5 of \cite{el_karoui_capacities_2013-1} to conclude that $v^S(x) = v^W(x)$.
\end{proof}

With the result of \Cref{prop:weak-strong} in hand, we will write $v = v^W = v^S$ and refer to $v$ as the \emph{value function}. We henceforth choose to work with the strong formulation of the control problem, unless we explicitly refer to the weak formulation.

In the following section, we consider two simple examples of minimising and maximising the expected time spent in a ball about the origin.

\section{Occupation times}\label{sec:discontinuities}

From now on, we specialise to radially symmetric cost functions defined on a ball. Fix $d \geq 2$, $R > 0$ and let the domain be $D = B_R(0) \subset \RR^d$. Without loss of generality, we fix the boundary cost $g \equiv 0$. In this section, we will consider two examples that illustrate the optimality of the controlled processes that we call radial and tangential motion, which we later show to be optimal for more general cost functions. Note that, since the cost functions in this section have a discontinuity in $D \setminus \{0\}$, these examples are not covered by the more general results in this paper. However, some of the observations made in the proofs of optimality for these examples are used in later sections.

Since the problem is radially symmetric, we expect the value function $v$ to depend only on the radius. In fact, in the following examples, it will be convenient to work with the squared radius of any controlled process. We now derive an SDE for this squared radius process.

\begin{lemma}\label{lem:squared-radius}
	Let $x \in D$, $\sigma \in \mathcal{U}$, and define $X^\sigma$ by the stochastic integral
	\begin{equation}
		X^\sigma_t = x + \int_0^t\sigma_s \D B_s, \quad t \geq 0.
	\end{equation}
	Define the squared radius process $Z^\sigma$ by $Z^\sigma_t := \abs{X^\sigma_t}^2$, $t \geq 0$. Then $Z^\sigma$ satisfies the SDE
	\begin{equation}\label{eq:squared-radius}
		\D Z^\sigma_t = 2 X_t^{\top}\sigma_t \D B_t + \D t, \quad Z^\sigma_0 = \abs{x}^2.
	\end{equation}
\end{lemma}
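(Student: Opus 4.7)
The plan is a direct application of It\^o's formula to the smooth function $\phi: \RR^d \to \RR$ defined by $\phi(x) = |x|^2 = \sum_{i=1}^d x_i^2$, composed with the semimartingale $X^\sigma$. I would first record the derivatives, namely $\nabla \phi(x) = 2x$ and $D^2 \phi(x) = 2 I_d$, where $I_d$ is the $d \times d$ identity matrix.

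Next, since $X^\sigma$ is the It\^o process $dX^\sigma_t = \sigma_t \, dB_t$, It\^o's formula yields
\begin{equation*}
	dZ^\sigma_t = d\phi(X^\sigma_t) = \nabla\phi(X^\sigma_t)^\top \sigma_t \, dB_t + \tfrac{1}{2} \trace\bigl(D^2\phi(X^\sigma_t)\, \sigma_t \sigma_t^\top \bigr) \, dt = 2 (X^\sigma_t)^\top \sigma_t \, dB_t + \trace(\sigma_t \sigma_t^\top) \, dt.
\end{equation*}
The final step is to invoke the defining constraint on the control set $U$, namely $\trace(\sigma_t \sigma_t^\top) = 1$ for every $t \geq 0$, which collapses the finite-variation term to $dt$ and gives the claimed SDE. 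The initial condition $Z^\sigma_0 = |x|^2$ is immediate from $X^\sigma_0 = x$.

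There is no real obstacle here: $\phi$ is smooth so no localisation is needed, $X^\sigma$ is a genuine It\^o process by construction, and the only substantive ingredient beyond It\^o's formula is the trace-one normalisation built into the definition of $\mathcal{U}$.
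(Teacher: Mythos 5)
Your proof is correct and follows exactly the same route as the paper: apply It\^o's formula to $x \mapsto |x|^2$ and use the trace-one constraint $\trace(\sigma_t\sigma_t^\top) = 1$ to reduce the drift to $\D t$. You merely spell out the gradient and Hessian explicitly, which the paper leaves implicit.
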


\begin{proof}
	We apply It\^o's formula, along with the constraint that $\sigma_t \in U$, to find
	\begin{equation}
	\begin{split}
		\D Z^\sigma_t & = 2 {X^\sigma}^{\top}\sigma_t \D B_t + \trace(\sigma_t\sigma^{\top}_t)\D t\\
		& = 2 {X^\sigma}^{\top}\sigma_t \D B_t + \D t.\qedhere
	\end{split}
	\end{equation}
\end{proof}

\subsection{Tangential motion}\label{sec:tangential}

We first consider the following example of minimising the expected time spent in a ball about the origin that is contained in $D$. We note the similarities with \cite[Example 1.6]{larsson_minimum_2020}, for which the same control is shown to be optimal.

\begin{ex}\label{ex:step-decr}	
	Let $\rho \in (0, R)$ and define $f: D \to \RR$ by
	\begin{equation}
		f(x) = \begin{cases}
	 	0, & \abs{x} \leq \rho,\\
	 	-1, & \abs{x} \in (\rho, R).
	 \end{cases}
	\end{equation}
	We seek the value function
	\begin{equation}
		v(x) = \inf_{\sigma \in \mathcal{U}}\EE^x\left[\int_0^{\tau}f(X^{\sigma}_s)\D s\right] = \inf_{\sigma \in \mathcal{U}}\EE^x\left[\int_0^{\tau}-\ind{\abs{X^{\sigma}_s}\in(\rho, R)}\D s\right].
	\end{equation}
	That is, we wish to maximise the expected time that the radius process $\abs{X^{\sigma}}$ spends in the interval $(\rho, R)$. The cost function is shown in \Cref{fig:cost-decr}.
\end{ex}

\begin{figure}[h]
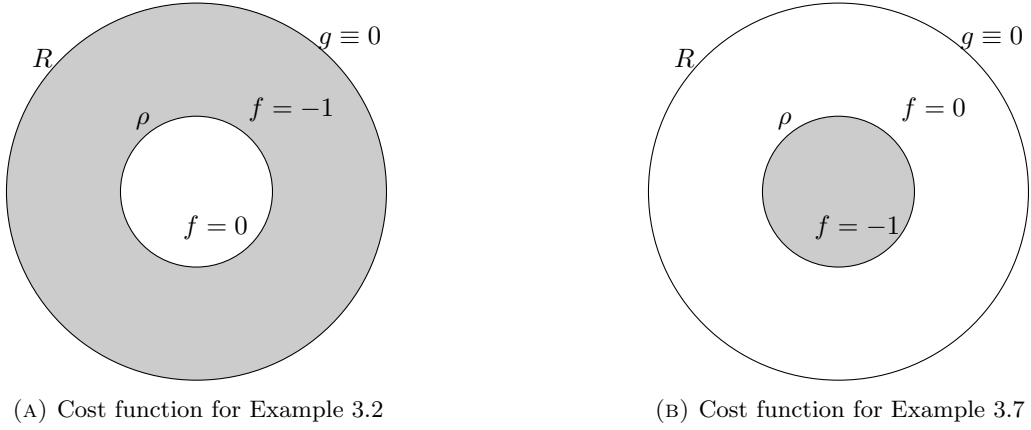

	\centering
	\begin{subfigure}{0.49\textwidth}
		\centering
		\circles
		\caption{Cost function for \Cref{ex:step-decr}}
		\label{fig:cost-decr}\par
	\end{subfigure}
	\begin{subfigure}{0.49\textwidth}
		\centering
		\circlesdecr
		\caption{Cost function for \Cref{ex:step-incr}}
		\label{fig:cost-incr}\par
	\end{subfigure}
	\caption{Cost functions for the examples of \Cref{sec:discontinuities}}
\end{figure}

We conjecture that, on the boundary $\{x \in D \colon \abs{x} = \rho\}$, any optimal control for \Cref{ex:step-decr} must enforce motion tangential to this internal boundary. We now define a process that exhibits this behaviour. 

\begin{defn}[Tangential motion]\label{def:tangential}
	$\mathit{d = 2}$: Fix $x = (x_1, x_2)^\top \in D \setminus \{0\}$, define an orthogonal vector $x^\perp := (-x_2, x_1)$, and define the function $\sigma^0: D \to \RR^{2, 2}$
	\begin{equation}\label{eq:tangential-function}
		\sigma^0(x) := \frac{1}{\abs{x}}
		\begin{bmatrix}
			x^\perp; & 0
		\end{bmatrix}.
	\end{equation}
	Fix $x \in D \setminus\{0\}$ and suppose that $X^{\sigma^0}$ is the strong solution of the SDE
	\begin{equation}
		\D X_t = \sigma^0(X_t)\D B_t, \quad X_0 = x;
	\end{equation}
	see \Cref{rem:tang-strong} for existence and uniqueness.
	To save notation, also denote by $\sigma^0 \in \mathcal U$ the control process defined by $\sigma^0_t := \sigma^0(X^{\sigma^0}_t)$, $t \geq 0$, so that
	\begin{equation}
		X^{\sigma^0}_t = x + \int_0^t \sigma^0_s \D B_s, \quad t \ge 0.
	\end{equation}
	We say that the process $X^{\sigma^0}$ follows \emph{tangential motion}.
	
	We cannot define $\sigma^0$ in the same way for $d \geq 3$, since it is not possible to continuously define an orthogonal vector in general dimensions. We therefore adapt the definition to obtain a process with analogous behaviour to the two-dimensional case.
	
	$\mathit{d \geq 3}$: Define the control $\sigma^0 \in \mathcal{U}$ locally as follows. Fix $x \in D \setminus \{0\}$ and a halfspace $H_x \ni x$. Set $X^{\sigma^0}_0 = x$. Then, for all $y \in H_x$, we can define continuously an orthogonal vector $y^\perp \in \RR^d \setminus\{0\}$ such that $y^\top y^\perp = 0$ and $|y^\perp| = |y|$. Define the function $\sigma^0: D \cap H_x \to \RR^{d, d}$ by $\sigma^0(y) := \frac{1}{|y|}\begin{bmatrix}
			y^\perp; & 0 & \dots; & 0
	\end{bmatrix}$, and let $X^{\sigma^0}$ be the strong solution of
	\begin{equation}\label{eq:tang-high-dim}
		\D X_t = \sigma^0(X_t)\D B_t,
	\end{equation}
	for $s \in (0, \tau_x)$, where $\tau_x := \inf \{s \ge 0 \colon \; X^{\sigma^0}_s \notin H_x\}$. Note that, once again, justification for existence and uniqueness of strong solutions is given in \Cref{rem:tang-strong}. Also define the control $\sigma^0$ by $\sigma^0_s = \sigma^0(X^{\sigma^0}_s)$ for $s \in (0, \tau_x)$. Now fix a halfspace $H_{X^{\sigma^0}_{\tau_x}} \ni X^{\sigma^0}_{\tau_x}$ on which we can define continuously an orthogonal vector once again. Let us redefine the function $\sigma^0:D \cap H_{X^{\sigma^0}_{\tau_x}} \to \RR^{d, d}$ accordingly on this halfspace and let $X^{\sigma^0}$ be the strong solution of \eqref{eq:tang-high-dim} until first exit of the halfspace. Again define the control $\sigma^0$ on this time interval by $\sigma^0_s = \sigma^0(X^{\sigma^0}_s)$.
	
	Continuing this construction iteratively for all times $s \leq \tau_D$, we define a control $\sigma^0 \in \mathcal U$ and a process $X^{\sigma^0}$ which we say follows \emph{tangential motion in dimension $d \geq 3$}.
	\end{defn}

Note that tangential motion is not defined at the origin. In later sections we will need to take care when considering cases where tangential motion is optimal close to the origin.

For $\sigma^0$ defined in \Cref{def:tangential}, we can find a formula for the squared radius process $Z^{\sigma^0}$ via \Cref{lem:squared-radius}, as follows.

\begin{lemma}\label{lem:deterministically-increasing}
	Suppose that $X^{\sigma^0}$ follows tangential motion, as defined in \Cref{def:tangential}, with $X^{\sigma^0}_0 = x \neq 0$. Then the radius process is deterministically increasing and, for any $t \geq 0$,
	\begin{equation}
		Z^{\sigma^0}_t = \abs{X^{\sigma^0}_t}^2 = \abs{x} + t.
	\end{equation}
\end{lemma}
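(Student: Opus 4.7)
The plan is to apply the squared radius SDE from \Cref{lem:squared-radius} and exploit the orthogonality built into the definition of $\sigma^0$ to eliminate the martingale part. First, by \Cref{lem:squared-radius}, for $\sigma^0$ defined as in \Cref{def:tangential} we have
\begin{equation}
\D Z^{\sigma^0}_t = 2 (X^{\sigma^0}_t)^\top \sigma^0_t \D B_t + \D t, \qquad Z^{\sigma^0}_0 = |x|^2,
\end{equation}
so it suffices to show that the drift-free term vanishes identically, i.e.\ that $(X^{\sigma^0}_t)^\top \sigma^0_t = 0$ for all $t$ (before exiting $D$).

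For $d = 2$, I would compute directly: writing $X^{\sigma^0}_t = y$, the matrix $\sigma^0(y)$ has first column $y^\perp/|y|$ and zero elsewhere, so $y^\top \sigma^0(y)$ is a row vector whose only potentially nonzero entry is $y^\top y^\perp / |y|$. By the explicit definition $y^\perp = (-y_2, y_1)^\top$, we have $y^\top y^\perp = 0$, so the stochastic integral vanishes and $\D Z^{\sigma^0}_t = \D t$. Integrating gives $Z^{\sigma^0}_t = |x|^2 + t$.

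For $d \geq 3$, the same computation works locally on each halfspace $H$ used in the iterative construction, since by definition the map $y \mapsto y^\perp$ on $H$ satisfies $y^\top y^\perp = 0$; hence again $(X^{\sigma^0}_t)^\top \sigma^0_t = 0$ on each piece. To conclude globally I would argue by induction over the successive halfspace exits $\tau_{x} \leq \tau_{X^{\sigma^0}_{\tau_x}} \leq \cdots$: on the first interval $[0, \tau_x)$ the identity $Z^{\sigma^0}_t = |x|^2 + t$ follows as in $d=2$, and by continuity it extends to $t = \tau_x$; then at time $\tau_x$ we restart with the new halfspace, and the increment $Z^{\sigma^0}_t - Z^{\sigma^0}_{\tau_x} = t - \tau_x$ follows by the same argument. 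Since each halfspace exit time is strictly positive almost surely (the radius is bounded away from $0$ on compact time intervals away from a zero of the starting radius, and the Brownian driver leaves any halfspace only after positive time), the countable concatenation covers $[0, \tau_D)$.

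The only mild obstacle is the bookkeeping for $d \geq 3$: one must check that the piecewise construction does indeed yield a well-defined $\mathcal{U}$-valued control for all $t < \tau_D$, which follows because strong solutions exist on each piece (as noted in \Cref{rem:tang-strong}) and the radius is bounded away from the origin once $|x| > 0$, as the identity being proved itself shows $|X^{\sigma^0}_t| = \sqrt{|x|^2 + t} > 0$.
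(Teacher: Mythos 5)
Your proof is correct and follows essentially the same approach as the paper: invoke \Cref{lem:squared-radius} and show the martingale term vanishes because $(X^{\sigma^0}_t)^\top\sigma^0_t = 0$ by the orthogonality built into \Cref{def:tangential}. You supply more explicit bookkeeping for the piecewise halfspace construction when $d \ge 3$, which the paper's proof treats more tersely, but the underlying argument is identical.
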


\begin{proof}
	For $t \geq 0$, provided that $\big \lvert X^{\sigma^0}_t\big\rvert \neq 0$, we see that there is some orthogonal vector $\left(X^{\sigma^0}_t\right)^\perp$, as defined in \Cref{def:tangential}, such that
	\begin{equation}
		\left(X^{\sigma^0}_t\right)^{\top}\sigma^0_t = \frac{1}{\abs{X^{\sigma^0}_t}}
		\begin{bmatrix}
	 		\left(X^{\sigma^0}_t\right)^\top \left(X^{\sigma^0}_t\right)^\perp, & 0, & \dotsc, & 0
		\end{bmatrix} = 
		\begin{bmatrix}
	 		0, & \dotsc, & 0
	 	\end{bmatrix}.
	\end{equation}
	Therefore, by \Cref{lem:squared-radius}, $Z^{\sigma^0}$ satisfies
	\begin{equation}
		\D Z^{\sigma^0}_t = \D t.	
	\end{equation}
	Let $\xi = \abs{x}^2 \neq 0$, so that $Z^{\sigma^0}_0 = \xi$. Then $Z^{\sigma^0}$ is the deterministically increasing process given by
	\begin{equation}
	\begin{split}
		t \mapsto Z^{\sigma^0}_t = \xi + t. \qedhere
	\end{split}
	\end{equation}
\end{proof}

\begin{remark}\label{rem:tang-strong}
	As a consequence of the above lemma, supposing that $X^{\sigma^0}_0 \neq 0$, we have that $\abs{X^{\sigma^0}_t} > 0$ for all $t \geq 0$. Note also that each of the functions $\sigma^0$ in \eqref{eq:tangential-function} and \eqref{eq:tang-high-dim} is Lipschitz continuous in the set on which it is defined. By adapting standard results of It\^o on existence and uniqueness of strong solutions of SDEs with Lipschitz coefficients, we can show that existence and uniqueness holds for \eqref{eq:tangential-function} and \eqref{eq:tang-high-dim}. Therefore the control $\sigma^0$ in \Cref{def:tangential} is well-defined.
\end{remark}

The observation that the process $X^{\sigma^0}$ has deterministically increasing radius was made by Fernholtz, Karatzas and Ruf in Section 6.2 of \cite{fernholz_volatility_2018} and again by Larsson and Ruf in Section 4.2 of \cite{larsson_relative_2020}, where they consider a problem of relative arbitrage.

In \Cref{fig:tangential-motion}, we show a simulated trajectory of a process following tangential motion in dimension $d = 2$. We note that a similar simulation is produced in Figure 2 of \cite{larsson_relative_2020}.

Having defined tangential motion and proved a key property of this process, we now construct a candidate for the value function in \Cref{ex:step-decr}.

Fix $\xi \geq \rho^2$. Then we conjecture that the control $\sigma^0$ defined in \Cref{def:tangential} is optimal, and we compute the expected cost
\begin{equation}
\begin{split}
	\EE^x\left[\int_0^{\tau}-\mathds{1}_{\{\abs{X^{\sigma^0}_s}\in(\rho, R)\}}\D s\right] & = \EE^\xi\left[\int_0^{\tau}-\mathds{1}_{\{Z^{\sigma^0}_s\in(\rho^2, R^2)\}}\D s\right] = - \int_0^\infty \mathds{1}_{\left\{s \in (0, R^2 - \xi)\right\}}\D s\\
	& = - \int_{0}^{R^2 - \xi}\D s = \xi - R^2.
\end{split}
\end{equation}

Now suppose that $\xi < \rho^2$. This includes the case where the process starts at the origin, where the control $\sigma^0$ is not well-defined. However, since the cost is zero in the ball $\{x \in \RR^d \colon \abs{x} < \rho\}$, we will see that any strategy is optimal in this region. For a fixed $r \in (\sqrt{\xi}, \rho)$ and an arbitrary $\sigma \in \mathcal{U}$, define the control $\sigma^\star$ by
\begin{equation}
	\sigma^\star_t =
	\begin{cases}
		\sigma_t, & \abs{X^{\sigma^\star}_t} < r,\\
		\sigma^0_t, & \abs{X^{\sigma^\star}_t} \in [r, R).
	\end{cases}
\end{equation}
Then we compute the expected cost
\begin{equation}
\begin{split}
	\EE^x\left[\int_0^{\tau}-\mathds{1}_{\{\abs{X^{\sigma^\star}_s}\in(\rho, R)\}}\D s\right] & = \EE^\xi\left[\int_0^{\tau}-\mathds{1}_{\{Z^{\sigma^\star}_s\in(\rho^2, R^2)\}}\D s\right] = \EE^{r^2}\left[\int_0^{\tau}-\mathds{1}_{\{Z^{\sigma^0}_s\in(\rho^2, R^2)\}}\D s\right]\\
	& = - \int_{r^2 - \xi}^\infty \mathds{1}_{\left\{s \in (\rho^2 - \xi, R^2 - \xi)\right\}}\D s = - \int_{\rho^2}^{R^2}\D s = \rho^2 - R^2.
\end{split}
\end{equation}

This calculation gives us a conjecture for the value function in \Cref{ex:step-decr}. Using the It\^o-Tanaka formula, we will show that our candidate function satisfies a dynamic programming principle, as described in \Cref{app:dpp-comparison}, and we can then deduce that this function must be the value function.

\begin{prop}\label{prop:step-value}
Let $w: [0, R^2) \to \RR$ be defined by
\begin{equation}
		w(\xi) =
		\begin{cases}
				\rho^2 - R^2, & \xi \leq \rho^2,\\
				\xi - R^2, & \xi \in (\rho^2, R^2),
		\end{cases}
\end{equation}
and define $\overline{v}: D \to \RR$ by $\overline{v}(x) = w(\abs{x}^2)$, for $x \in D$. Then the value function for \Cref{ex:step-decr} is given by $v = v^S = v^W = \overline{v}$.
\end{prop}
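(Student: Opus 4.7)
My plan is to establish the two inequalities $v \le \overline{v}$ and $v \ge \overline{v}$ separately, then conclude $v^S = v^W$ by invoking \Cref{prop:weak-strong} (the cost $f = -\mathds{1}_{\{\abs{\cdot}\in(\rho,R)\}}$ is upper semicontinuous and bounded, and $g \equiv 0$, so \Cref{ass:main} is in force).

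The upper bound $v \le \overline{v}$ follows directly from the two explicit constructions already carried out in the paragraphs immediately preceding the proposition: for $\abs{x}^2 \ge \rho^2$ the tangential control $\sigma^0$ from \Cref{def:tangential} yields expected cost $\abs{x}^2 - R^2 = w(\abs{x}^2)$ by \Cref{lem:deterministically-increasing}; for $\abs{x}^2 < \rho^2$ the switching control $\sigma^\star$ (arbitrary below radius $r$, then tangential) yields expected cost $\rho^2 - R^2 = w(\abs{x}^2)$, uniformly in the choice of $r \in (\abs{x}, \rho)$ and the interior control $\sigma$. So for every $x \in D$, $v(x) \le \overline{v}(x)$.

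The main work is the matching lower bound. Fix an arbitrary $\sigma \in \mathcal{U}$ and consider $Z^\sigma_t = \abs{X^\sigma_t}^2$, which by \Cref{lem:squared-radius} satisfies $\D Z^\sigma_t = 2 (X^\sigma_t)^\top \sigma_t \D B_t + \D t$. The function $w$ is convex and piecewise linear on $[0, R^2)$ with left derivative $w'_-(\xi) = \mathds{1}_{\{\xi > \rho^2\}}$ and second derivative, in the sense of distributions, equal to the Dirac mass $\delta_{\rho^2}$. The It\^o--Tanaka formula for convex functions (e.g.\ Revuz--Yor, Chapter VI) therefore gives
\begin{equation}
w(Z^\sigma_{\tau\wedge t}) = w(\abs{x}^2) + \int_0^{\tau\wedge t} 2 w'_-(Z^\sigma_s)(X^\sigma_s)^\top \sigma_s \D B_s + \int_0^{\tau\wedge t}\mathds{1}_{\{Z^\sigma_s > \rho^2\}} \D s + \tfrac{1}{2} L^{\rho^2}_{\tau\wedge t}(Z^\sigma),
\end{equation}
where $L^{\rho^2}(Z^\sigma) \ge 0$ is the semimartingale local time. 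Taking expectations, the stochastic integral is a genuine martingale since $\tau$ has finite expectation by \Cref{rem:exit-time} and the integrand is bounded by $2R$. Sending $t \to \infty$ and using $Z^\sigma_\tau = R^2$ with $\lim_{\xi \nearrow R^2} w(\xi) = 0$, dominated and monotone convergence yield
\begin{equation}
0 = w(\abs{x}^2) + \EE^x\left[\int_0^\tau \mathds{1}_{\{\abs{X^\sigma_s}\in(\rho,R)\}} \D s\right] + \tfrac{1}{2}\EE^x[L^{\rho^2}_\tau(Z^\sigma)].
\end{equation}
Rearranging and using non-negativity of the local time gives
\begin{equation}
\EE^x\left[\int_0^\tau f(X^\sigma_s) \D s\right] = w(\abs{x}^2) + \tfrac{1}{2}\EE^x[L^{\rho^2}_\tau(Z^\sigma)] \ge \overline{v}(x),
\end{equation}
and taking the infimum over $\sigma \in \mathcal{U}$ concludes $v \ge \overline{v}$.

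The step I expect to require the most care is the It\^o--Tanaka application and the passage to the limit $t \to \infty$: one must verify that the stochastic integral is a true (not merely local) martingale on the unbounded random interval $[0,\tau]$ and justify exchanging limit and expectation in the local time term, but both reduce to the uniform bound $\EE^x[\tau] \le \diam(D)^2$ from \Cref{rem:exit-time} together with the boundedness of $w'_-$ and of $X^\sigma$ on $[0,\tau]$. The observation that makes the whole argument work is the nonnegativity of the local time, which encodes the fact that any control other than tangential motion necessarily ``wastes'' time crossing the threshold $\abs{x} = \rho$ and so cannot improve on $\overline{v}$.
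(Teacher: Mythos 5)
Your proof is correct and follows essentially the same route as the paper: the lower bound via It\^o--Tanaka on $w(Z^\sigma)$ with non-negativity of the local time at $\rho^2$, the upper bound via the tangential control (with an arbitrary control below radius $r$ and a limit argument when starting at the origin), and Proposition \ref{prop:weak-strong} for $v^S = v^W$. The only cosmetic difference is that you organize the argument as two inequalities $v \le \overline{v}$ and $v \ge \overline{v}$, whereas the paper phrases it as verifying the submartingale/martingale characterization of Remark \ref{rem:dpp}; these are the same computation.
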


\begin{figure}[h]
  \centering
  \includegraphics[width = 0.4\textwidth]{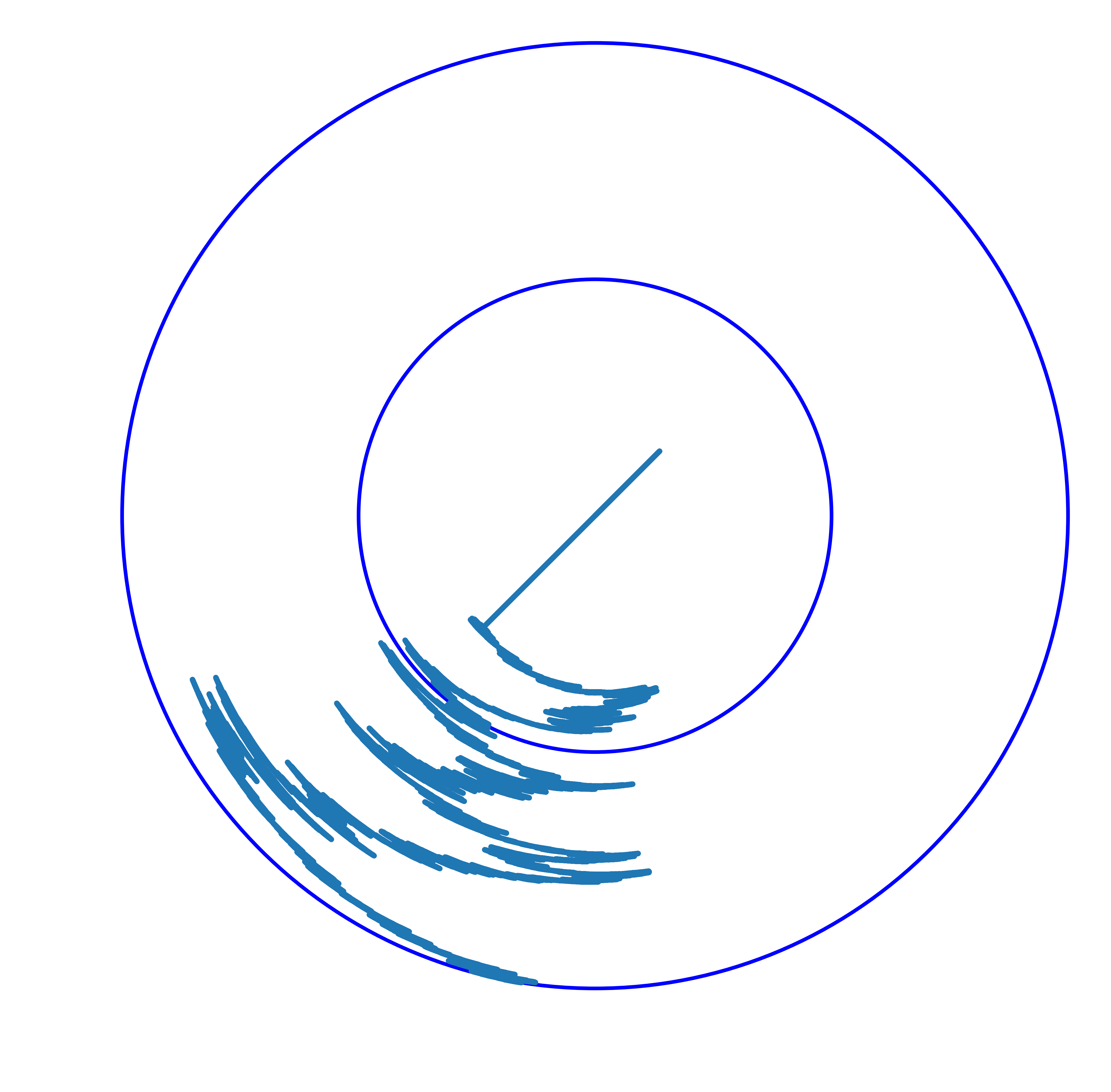}
  \caption{A possible trajectory for an optimal strategy in \Cref{ex:step-decr}}
\end{figure}

\begin{proof}
	We first show that $\overline{v}$ satisfies the form of the dynamic programming principle given in \Cref{rem:dpp}.
	
	Define $\tilde{f}: [0, R^2) \to \RR$ such that $f(x) = \tilde{f}(\abs{x})$, for $x \in D$; i.e.
	\begin{equation}
		\tilde{f}(\xi) = - \ind{\xi \in (\rho^2, R^2)}, \quad \xi \in [0, R^2).
	\end{equation}
	We seek to prove that $w(Z^\sigma_t) + \int_0^t \tilde{f}(Z^\sigma_s)\D s$ is a submartingale for all $\sigma \in \mathcal{U}$, and that $w(Z^{\sigma^\star}_t) + \int_0^t \tilde{f}(Z^{\sigma^\star}_s)\D s$ is a martingale for an optimal strategy $\sigma^{\star}\in \mathcal{U}$.
	
	Let $\sigma \in \mathcal{U}$. We note that $w$ is not continuously differentiable at $\xi = \rho^2$, so we apply the It\^o-Tanaka formula to write down an SDE for $w(Z^\sigma_t)$. We calculate that the left derivative of $w$ is given by
	\begin{equation}
		w_{-}^{\prime}(\xi) =
		\begin{cases}
			0, & \text{for} \quad \xi \leq \rho^2,\\
			1, & \text{for} \quad \xi \in (\rho^2, R^2),	
		\end{cases}
	\end{equation}
	and the distributional derivative of $w^\prime_-$ is
	\begin{equation}
		w^{\prime\prime}(\D a) = \delta_{\rho^2} (\D a).
	\end{equation}
	Hence, by the It\^o-Tanaka formula,
	\begin{equation}\label{eq:ex-ito-tanaka}
	\begin{split}
		w(Z^\sigma_t) - w(\xi) + \int_0^t \tilde{f}(Z^\sigma_s)\D s & = 2 \int_0^t \mathds{1}_{\{Z^\sigma_s > \rho^2\}} X_s^{\top} \sigma_s \D B_s + \int_0^t \mathds{1}_{\{Z^\sigma_s > \rho^2\}}\D s + \frac{1}{2}L_t^{\sigma, \rho^2} - \int_0^t \mathds{1}_{\{Z^\sigma_s > \rho^2\}}\D s\\
		& = 2 \int_0^t \mathds{1}_{\{Z^\sigma_s > \rho^2\}} X_s^{\top} \sigma_s \D B_s + \frac{1}{2}L_t^{\sigma, \rho^2},
	\end{split}
	\end{equation}
	where $L_t^{\sigma, \rho^2}$ denotes the local time of $Z^\sigma$ at $\rho^2$ at time $t$.
	Since local time is always non-negative, we have shown that
	\begin{equation}
		w(Z^\sigma_t) + \int_0^t\tilde{f}(Z^\sigma_s)\D s
	\end{equation}
	is a submartingale for any $\sigma \in \mathcal{U}$.
	
	Now we note that, for any $\sigma \in \mathcal{U}$,
	\begin{equation}
		\overline{v}(X_\tau^\sigma) = w(Z_\tau^\sigma) = w(R^2) = 0,
	\end{equation}
	by continuity of the paths of $X^\sigma$. Therefore, we can use the submartingale property and the optional sampling theorem to find that
	\begin{equation}
	\begin{split}
		\EE^x \left[\int_0^\tau f(X_s^\sigma) \D s \right] & = \EE^x \left[\int_0^\tau f(X_s^\sigma) \D s + \overline{v}(X^\sigma_\tau)\right]\\
		& = \EE^\xi \left[\int_0^\tau \tilde{f}(Z^\sigma_s)\D s + w(Z^\sigma_\tau)\right] \geq w(\xi) = \overline{v}(x).
	\end{split}
	\end{equation}
	
	Now, supposing that $\xi \neq 0$, consider the control $\sigma^\star = \sigma^0$, so that $Z^{\sigma^\star}_t = \xi + t$, for any $t \geq 0$. Then
	\begin{equation}
		\EE^\xi \left[- \int_0^\tau \ind{Z^{\sigma^\star}_s \in (\rho^2, R^2)} \D s\right] = - \int_0^\infty \ind{s \in (\rho^2 - \xi \wedge \rho^2, R^2 - \xi)} \D s =
		\begin{cases}
		 \rho^2 - R^2, & \xi \in (0, \rho^2],\\
		 \xi - R^2, & \xi \in (\rho^2, R^2).
		\end{cases}
	\end{equation}
	In the case that $\xi = 0$, fix $r \in (0, \rho)$ and $\sigma \in \mathcal{U}$, and take
	\begin{equation}
		\sigma^\star_t =
		\begin{cases}
			\sigma_t, & \abs{X^{\sigma^\star}_t} < r,\\
			\sigma^0_t, & \abs{X^{\sigma^\star}_t} \in [r, R).
		\end{cases}
	\end{equation}
	Then
	\begin{equation}
		\EE^0\left[ - \int_0^\tau \ind{Z^{\sigma^\star}_s \in (\rho^2, R^2)} \D s\right] = \EE^{r^2}\left[ - \int_0^\tau \ind{Z^{\sigma^\star}_s \in (\rho^2, R^2)} \D s\right] = \rho^2 -R^2.
	\end{equation}
	
	We conclude that, for any $x \in D$,
	\begin{equation}
		\overline{v}(x) = \inf_{\sigma \in \mathcal{U}}\EE^x \left[\int_0^\tau f(X_s^\sigma) \D s \right] = v^S(x) = v^W(x).
	\end{equation}
	Hence the conjectured function $\overline{v}$ is indeed the value function.
\end{proof}

\subsection{Radial motion}

We now turn to a second example of maximising the expected time spent in a ball around the origin.

\begin{ex}\label{ex:step-incr}
	Fix $\rho \in (0, R)$ and define the cost $f: D \to \RR$ by
	\begin{equation}
	f(x) =
	\begin{cases}
		-1, & \abs{x} < \rho\\
		0, & \abs{x} \in [\rho, R).
	\end{cases}
	\end{equation}
	We seek the value function
\begin{equation}
	v(x) = \inf_{\sigma\in \mathcal{U}} \EE^x\left[\int_0^\tau f(X^\sigma_s) \D s\right] = \inf_{\sigma \in \mathcal{U}}\EE^x\left[\int_0^\tau - \mathds{1}_{\left\{\abs{X^\sigma_s} < \rho\right\}}\D s \right].
\end{equation}
	That is, we wish to maximise the expected time that the martingale spends in the ball $B_\rho(0)$. The cost function is shown in \Cref{fig:cost-incr}.
\end{ex}

We propose that an optimal strategy is to run as a Brownian motion on the radius of the domain. We now define a process that follows this strategy.
\begin{defn}[Radial motion]\label{def:radial}
	Define a function $\sigma^1: D \to \RR$ by
	\begin{equation}\label{eq:radial-function}
		\sigma^1(x) =
		\begin{cases}
			\frac{1}{\abs{x}}
			\begin{bmatrix}
				x; & 0; & \cdots; & 0
			\end{bmatrix}, & x \neq 0,\\
			\begin{bmatrix}
				e_1; & 0; & \cdots; & 0 
			\end{bmatrix}, & x = 0,
		\end{cases}
	\end{equation}
	where $e_1$ is the unit vector in the first coordinate direction. Fix $x \in D$ and define $\sigma^1$ to be the constant control given by $\sigma^1_t = \sigma(x)$, for all $t \geq 0$. Define $X^{\sigma^1}$ by
	\begin{equation}
		X^{\sigma^1}_t = x + \int_0^t \sigma^1_s \D B_s = x + \sigma^1(x) B_t, \quad t \geq 0.
	\end{equation}
	We say that the process $X^{\sigma^1}$ follows \emph{radial motion}.
\end{defn}

A simulated trajectory of a process following radial motion is shown in \Cref{fig:radial-motion}, along with the sample path of its radius in \Cref{fig:radial-motion-rad}.

\begin{figure}[h]
\centering
	\begin{subfigure}{0.4\textwidth}
	\centering
		\includegraphics[width = 0.8\textwidth]{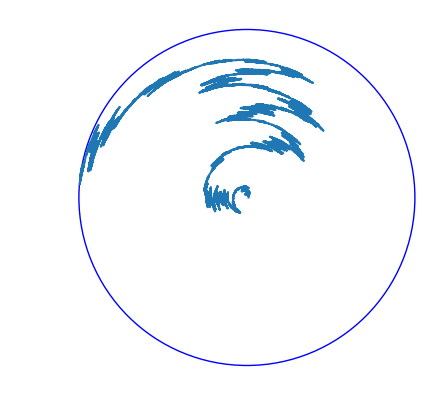}\vspace{-4.5ex}\caption{}\label{fig:tangential-motion}\par
	\end{subfigure}
	\begin{subfigure}{0.4\textwidth}
	\centering
		\includegraphics[width = 0.8\textwidth]{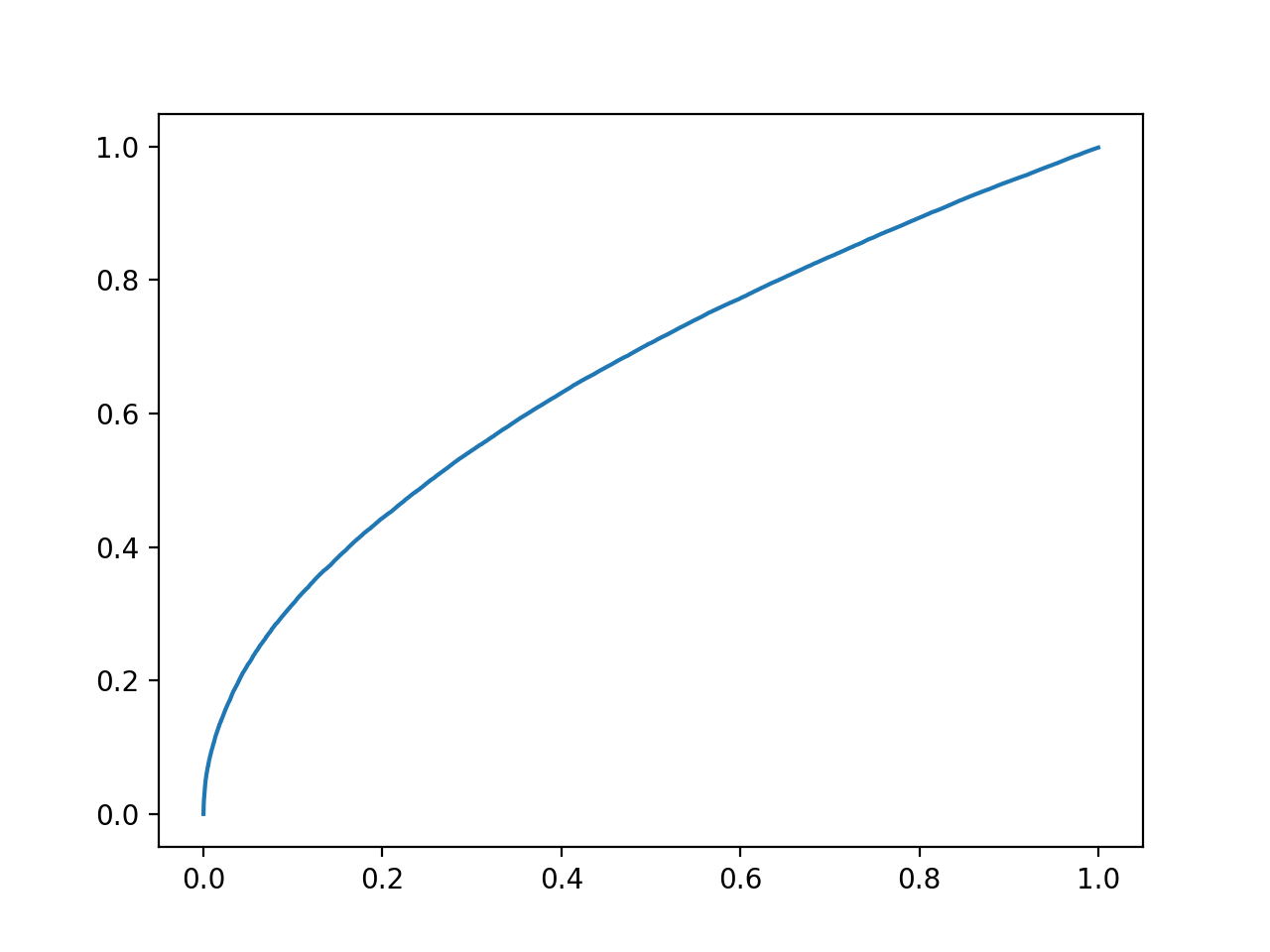}\vspace{1ex}\caption{}\label{fig:tangential-motion-rad}\par
	\end{subfigure}\\
	\begin{subfigure}{0.4\textwidth}
		\centering
			\includegraphics[width = 0.8\textwidth]{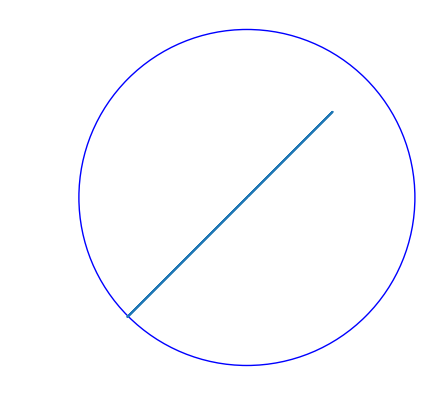}\vspace{-4.5ex}\caption{}\label{fig:radial-motion}\par
		\end{subfigure}
		\begin{subfigure}{0.4\textwidth}
		\centering
			\includegraphics[width = 0.8\textwidth]{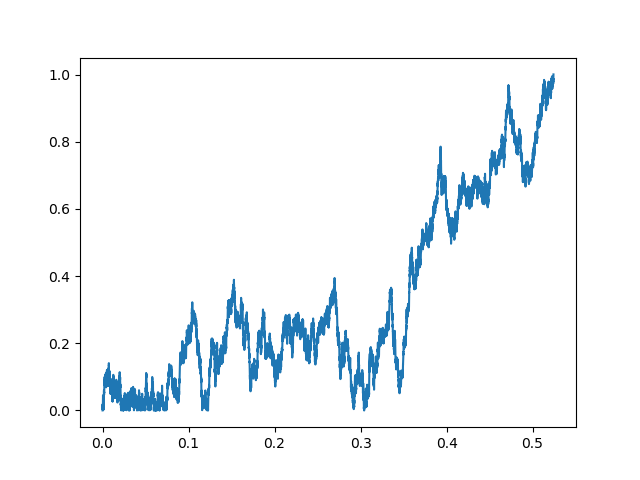}\vspace{1ex}\caption{}\label{fig:radial-motion-rad}\par
		\end{subfigure}
	\caption{Sample paths of processes $(X^{\sigma^0}_t)_{t \geq 0}$ following tangential motion and $(X^{\sigma^1}_t)_{t \geq 0}$  following radial motion are shown in (\textsc{a}) and (\textsc{c}), respectively. The corresponding radius processes are shown in (\textsc{b}) and (\textsc{d}) respectively.}\label{fig:rad-tang}
\end{figure}

Let $W$ be the first component of $B$, and note that $W$ is a one-dimensional Brownian motion. Then, defining $\sigma^1$ as in \Cref{def:radial}, we see that
\begin{equation}
	X^{\sigma^1}_t =
	\begin{cases}
 		x + \int_0^t \sigma^1_s \D B_s = x + W_t \frac{x}{\abs{x}}, & x \neq 0,\\
 		W_t e_1, & x = 0.
 	\end{cases}
\end{equation}
Hence $\big\lvert X^{\sigma^1}_t\big \rvert = \abs{\abs{x} + W_t}$, and so
\begin{equation}
	\EE^x\left[\int_0^\tau -\mathds{1}_{\left\{\abs{X^{\sigma^1}_t} < \rho\right\}} \right] = \EE^{\abs{x}}\left[-\int_0^{\tau_{R}} \mathds{1}_{\left\{W_t \in (-\rho, \rho)\right\}} \right],
\end{equation}
where $\tau_{R} = \inf \left\{t \geq 0 \colon \abs{W_t} = R\right\}$.

We can compute this expected cost by using the Green's function and speed measure for one-dimensional Brownian motion, as defined in Section 3 of \cite[Chapter VII]{revuz_continuous_1999}. The speed measure $m$ is given by
$\int m(\D y) = 2 \int \D y$, and the Green's function $G$ on the interval $[-\rho, \rho]$ is given by
\begin{equation}
	G(r, y) =
	\begin{cases}
		\frac{\left(y + R\right)\left(R - r\right)}{2 R}, & y \leq r,\\
		\frac{\left(r + R\right)\left(R - y\right)}{2 R}, & y \geq r,
	\end{cases}
\end{equation}
for $r, y \in [-\rho, \rho]$. By Corollary 3.8 of \cite[Chapter VII]{revuz_continuous_1999}, we have
\begin{equation}
\begin{split}
	\EE^{\abs{x}}\left[-\int_0^{\tau_{R}} \mathds{1}_{\left\{W_t \in (-\rho, \rho)\right\}} \right] & = -\int_{-\rho}^{\rho}G(\abs{x}, y) m(\D y) =
	\begin{cases}
		\abs{x}^2 + \rho^2 - 2\rho R, & \abs{x} < \rho,\\
		2 \rho \abs{x} - 2 \rho R, & \abs{x} \geq \rho.
	\end{cases}
\end{split}
\end{equation}
This gives us a candidate for the value function in \Cref{ex:step-incr}. Again, we present this function in terms of the radius squared. We can then apply It\^o's formula, using the SDE for the squared radius process that we derived in \Cref{lem:squared-radius}.

\begin{prop}\label{prop:step-incr}
	Let $w: [0, R^2) \to \RR$ be defined by
	\begin{equation}
		w(\xi) =
		\begin{cases}
			\xi + \rho^2 - 2\rho R, & \xi \leq \rho^2,\\
			2\rho \xi^{\frac{1}{2}} - 2\rho R, & \xi \in (\rho^2, R^2),
		\end{cases}
	\end{equation}
	and define $\overline{v}: D \to \RR$ by $\overline{v}(x) = w(\abs{x}^2)$, for $x \in D$. Then the value function for \Cref{ex:step-incr} is given by $v = v^S = v^W = \overline{v}$.
\end{prop}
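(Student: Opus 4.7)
The plan is to follow the same strategy as in the proof of \Cref{prop:step-value}: construct a process of the form $w(Z^\sigma_t) + \int_0^t \tilde f(Z^\sigma_s)\,\D s$, show it is a submartingale for every $\sigma \in \mathcal U$ and a martingale for the conjectured optimiser, and then invoke optional sampling. The upper bound $v(x) \le \overline v(x)$ is already in hand from the Green's function computation preceding the proposition, which shows that the radial control $\sigma^1$ of \Cref{def:radial} attains the cost $\overline v(x)$. It therefore suffices to prove the lower bound $v^S(x) \ge \overline v(x)$; the equality $v^W = v^S$ then follows from \Cref{prop:weak-strong}.

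Write $\tilde f(\xi) := -\ind{\xi < \rho^2}$ and $M_t := w(Z^\sigma_t) + \int_0^t \tilde f(Z^\sigma_s)\,\D s$. In contrast to \Cref{prop:step-value}, here $w \in C^1([0,R^2))$, since the two pieces of $w$ agree in value and derivative at $\rho^2$; the second derivative exists only in a distributional sense, with $w'' \equiv 0$ on $(0,\rho^2)$ and $w''(\xi) = -\tfrac12 \rho\,\xi^{-3/2}$ on $(\rho^2,R^2)$. No local time term therefore appears in the expansion, and the generalised It\^o formula combined with \Cref{lem:squared-radius} shows that the drift of $M$ is
\begin{equation}
w'(Z^\sigma_t) + 2 a_t\, w''(Z^\sigma_t) + \tilde f(Z^\sigma_t), \qquad a_t := {X^\sigma_t}^\top \sigma_t \sigma_t^\top X^\sigma_t.
\end{equation}
On $\{Z^\sigma_t < \rho^2\}$ this equals $1 + 0 - 1 = 0$; on $\{Z^\sigma_t > \rho^2\}$ it equals $\rho (Z^\sigma_t)^{-3/2}(Z^\sigma_t - a_t)$, which is nonnegative because the constraint $\trace(\sigma_t \sigma_t^\top) = 1$ forces $a_t = \abs{\sigma_t^\top X^\sigma_t}^2 \le \abs{X^\sigma_t}^2 = Z^\sigma_t$ (Cauchy--Schwarz on the Frobenius inner product). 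Under $\sigma^1$ the process stays on the ray through $x$, so $\sigma^1(\sigma^1)^\top$ is the rank-one projection onto that ray and the bound is saturated, making $M$ a true martingale.

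After localisation, taking expectations at $\tau$ and using $\EE^x[\tau] < \infty$ from \Cref{rem:exit-time} together with the boundary value $w(R^2) = 0$, optional sampling gives
\begin{equation}
\EE^x\left[\int_0^\tau f(X^\sigma_s)\,\D s\right] = \EE^\xi[M_\tau] \ge M_0 = \overline v(x)
\end{equation}
for every $\sigma \in \mathcal U$, with equality for $\sigma = \sigma^1$. This establishes $v^S = \overline v$, and hence $v^W = v^S = \overline v$ by \Cref{prop:weak-strong}. The only delicate point is the use of the generalised It\^o formula for a merely $C^1$ function of a continuous semimartingale; since $w'$ is absolutely continuous and $w''$ is locally integrable, this is standard, with the absence of a local time contribution being exactly what distinguishes this proof from \Cref{prop:step-value}.
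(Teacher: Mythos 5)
Your proof is correct and follows essentially the same route as the paper's: both apply the (generalised) It\^o formula to $w(Z^\sigma_t) + \int_0^t \tilde f(Z^\sigma_s)\,\D s$, identify a nonnegative drift via the positive semi-definiteness of $\abs{x}^2 I - xx^\top$ (which you restate as $a_t = X_t^\top\sigma_t\sigma_t^\top X_t \le Z_t$ when $\trace(\sigma_t\sigma_t^\top)=1$), and conclude by optional sampling, with the radial control $\sigma^1$ saturating the bound. The only cosmetic difference is that you work with the scalar $a_t$ rather than keeping the trace of the matrix product explicit as the paper does.
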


\begin{proof}[Proof of \Cref{prop:step-incr}]
	Again we will show that $\overline{v}$ satisfies the form of the dynamic programming principle given in \Cref{rem:dpp}.
	
	Note first that $w$ is continuously differentiable and twice piecewise continuously differentiable, with
	
	\begin{equation}
		w^\prime (\xi) =
		\begin{cases}
			1, & \xi \leq \rho^2,\\
			\rho \xi^{- \frac{1}{2}}, & \xi \in (\rho^2, R^2),
		\end{cases}
	\quad \text{ and } \qquad
		w^{\prime \prime}(\xi) =
		\begin{cases}
			0, & \xi \leq \rho^2,\\
			- \frac{1}{2} \rho \xi^{- \frac{3}{2}}, & \xi \in (\rho^2, R^2).
		\end{cases}
	\end{equation}
	Hence we can apply It\^o's formula to $w(Z^\sigma_t)$, for any $\sigma \in \mathcal{U}$, recalling that $Z^\sigma_t = \abs{X^\sigma_t}^2$. Let $Z^\sigma_0 = \xi \in [0, R^2)$. Then, for $t > 0$,
	\begin{equation}\label{eq:ex-ito-square}
	\begin{split}
		w(Z^\sigma_t) - w(\xi) & = \int_0^t \mathds{1}_{\left\{Z^\sigma_s \leq \rho^2\right\}}\D Z^\sigma_s + \rho \int_0^t \mathds{1}_{\left\{Z^\sigma_s \in (\rho^2, R^2)\right\}}{(Z^\sigma_s)}^{-\frac{1}{2}}\D Z^\sigma_s\\
		& \quad - \frac{\rho}{4} \int_0^t \mathds{1}_{\left\{Z^\sigma_s \in (\rho^2, R^2)\right\}} {(Z^\sigma_s)}^{-\frac{3}{2}} \D \, \langle Z^\sigma \rangle_s.
	\end{split}
	\end{equation}
	Substituting in the SDE \eqref{eq:squared-radius} for $Z^\sigma$, we find that there is a square-integrable martingale $M^\sigma$ such that
	\begin{equation}
	\begin{split}
		w(Z^\sigma_t) - w(\xi) & = \int_0^t \D M^\sigma_s + \int_0^t \mathds{1}_{\left\{Z^\sigma_s \leq \rho^2\right\}}\D s + \rho \int_0^t \mathds{1}_{\left\{Z^\sigma_s \in (\rho^2, R^2)\right\}}{(Z^\sigma_s)}^{-\frac{1}{2}}\D s\\
		& \quad - \rho \int_0^t \mathds{1}_{\left\{Z^\sigma_s \in (\rho^2, R^2)\right\}}{(Z^\sigma_s)}^{-\frac{3}{2}} \trace\left(X^\sigma_s {X^\sigma_s}^\top \sigma_s \sigma_s^\top\right) \D s\\
		& = \int_0^t \D M^\sigma s - \int_0^t f(X^\sigma_s) \D s + \rho \int_0^t \mathds{1}_{\left\{\abs{X^\sigma_s} \in (\rho, R)\right\}} \abs{X^\sigma_s}^{-3}\trace\left(\left[\abs{X^\sigma_s}^2 I - X^\sigma_s {X^\sigma_s}^\top\right]\sigma_s \sigma_s^\top\right) \D s,
	\end{split}
	\end{equation}
	where $I$ denotes the identity matrix. Noting that the matrix $\abs{x}^2 I - xx^\top$ is positive semi-definite for any $x \in \RR^d$, we see that the final integral in the above equation is always non-negative, and so
	\begin{equation}
		\overline{v}(X^\sigma_t) + \int_0^t f(X^\sigma_s) \D s
	\end{equation}
	is a submartingale for any $\sigma \in \mathcal{U}$.
	
	Now take $\sigma = \sigma^1$ and let $W$ be the first component of the Brownian motion $B$. Then, from the SDE \eqref{eq:squared-radius} for the squared radius process, we see that $Z := Z^{\sigma^1}$ is a one-dimensional squared Bessel process satisfying
	\begin{equation}
		\D Z_t = 2\sqrt{Z_t} \D W_t + \D t.
	\end{equation}
	Substituting this SDE for $Z$ into our calculation \eqref{eq:ex-ito-square}, and defining $X := X^{\sigma^1}$, we find that there is a square-integrable martingale $M$ such that, for any $t > 0$,
	\begin{equation}
	\begin{split}
		w(Z_t) - w(\xi) & = \int_0^t \D M_s + \int_0^t \mathds{1}_{\left\{Z_s \leq \rho^2\right\}}\D s + \rho \int_0^t \mathds{1}_{\left\{Z_s \in (\rho^2, R^2)\right\}}Z_s^{-\frac{1}{2}}\D s - \frac{\rho}{4}\int_0^t \mathds{1}_{\left\{Z_s \in (\rho^2, R^2)\right\}}Z_s^{-\frac{3}{2}} \cdot 4 Z_s \D s\\
		& = \int_0^t \D M_s - \int_0^t f(X_s)\D s.
	\end{split}
	\end{equation}
	Hence $\overline{v}(X_t) + \int_0^t f(X_s) \D s$ is a martingale.
	
	From the above submartingale and martingale properties, we can conclude in a similar manner as in the proof of \Cref{prop:step-value} that $v = \overline v$.
\end{proof}	

For the step cost functions considered above, optimal controls involve tangential and radial motion, as defined in \Cref{def:tangential} and \Cref{def:radial}, respectively. A consequence of \Cref{prop:radial-symmetric-value} in the next section will be that either tangential or radial motion is optimal for any continuous monotone cost function. More generally, we will show that, for a continuous radially symmetric cost function with sufficient regularity, an optimal control is to switch between radial and tangential motion.

\section{Explicit solution for radially symmetric costs}\label{sec:general-rad-symm}
	
In this section, we consider the control problem for more general radially symmetric cost functions. We make the ansatz that the optimal strategy is to switch between two extreme behaviours in the control set, namely the strategies of tangential and radial motion defined in \Cref{def:tangential} and \Cref{def:radial}, respectively. In this way, we reduce the control problem to a one-dimensional optimal switching problem for the radius process. We use the principles of smooth and continuous fit to identify the optimal switching points, and we provide an algorithm to construct a candidate for the value function. We are able to write this function explicitly in \Cref{def:candidate-value}. We refer to the theory of viscosity solutions, which we summarise in \Cref{app:dpp-comparison}, in order to verify that the candidate function is indeed equal to the value function.

We make the following assumptions.

	\begin{ass}\label{ass:rad-symm}
	Suppose that
		\begin{enumerate}
		\item The domain is $D = B_R(0) \subset \RR^d$, for some $R > 0$ and $d \geq 2$;
		\item The cost function $f$ is radially symmetric; i.e.\ $f(x) = \tilde{f}(\abs{x})$, for some function $\tilde{f}: [0, R) \to \RR$;
		\item The boundary cost $g$ is constant --- without loss of generality, we suppose that $g \equiv 0$;
		\item The cost function $f$ is continuous;
		\item There exists $\eta > 0$ such that the cost function $\tilde{f}$ is monotone on the interval $(0, \eta)$;
		\item The one-sided derivative $\tilde{f}^\prime_+(r)$ exists for all $r > 0$ and changes sign only finitely many times;
		\item There exists $\delta > 0$ such that $\tilde{f}$ is continuously differentiable on $(0, \delta)$ and $\lim_{r\to0}r\tilde{f}^\prime(r)=0$.
	\end{enumerate}
	\end{ass}
	
	\begin{remark}
		In \Cref{sec:exploding-cost}, we will relax the fourth condition on continuity and the seventh condition on differentiability.
		
		We rule out the case that the cost function oscillates at the origin by imposing the fifth condition on monotonicity. We will see in the following sections that the fifth and sixth conditions allow us to find an optimal strategy that switches between two regimes finitely many times. We believe that we would still be able to solve the control problem explicitly if we relax the fifth and sixth conditions, but in this case an optimal strategy may not exist. To simplify our exposition, we do not treat this case here.
	\end{remark}
	
	\begin{remark}
		Throughout this paper, we consider control problems on a bounded domain $D$. One could also consider an infinite time horizon control problem, by adding some discount term to the running cost. Specifically, consider
		\begin{equation}
			\tilde v(t, x) := \inf_{\nu \in \mathcal U}\EE^x \left[\int_t^\infty e^{-(s - t)} f(X^\nu_s) \D s\right].
		\end{equation}
		In this case, rather than the HJB equation \eqref{eq:hjb-intro}, $\tilde{v}$ should be a viscosity solution of the PDE
		\begin{equation}
			\partial_t u - \frac{1}{2} \inf_{\sigma \in U} \trace(D^2 u \sigma \sigma^\top) = f - u,
		\end{equation}
		adapting \cite[Theorem 6.11, Theorem 7.4]{touzi_optimal_2013} and our results of \Cref{app:dpp-comparison}. We do not give further details here, nor do we try to identify the value function explicitly for this case.
	\end{remark}
	
	Recall the functions $\sigma^0$ and $\sigma^1$ defined in \Cref{def:tangential} and \Cref{def:radial}, which are associated to tangential and radial motion, respectively. We conjecture that, in the case that $\tilde{f}$ is increasing at the origin, there exists a sequence of points $0 = s_0 < r_1 < s_1 < \dotsc < R$ such that an optimal controlled process $X^{\sigma^\star}$ for the weak value function is defined as follows. Let $X^{\sigma^\star}$ be a weak solution of the SDE $\D X_t = \sigma^\star_t(X) \D B_t$, where
\begin{equation}\label{eq:conj-optimal-control-incr}
	\sigma^\star_t(X^{\sigma^\star}) = 
	\begin{cases}
			\sigma^1\big(X^{\sigma^\star}_0\big),
			& \abs{X^{\sigma^\star}_t} \in [0, r_1),\\
			\sigma^0\big(X^{\sigma^\star}_t\big),
			& \abs{X^{\sigma^\star}_t} \in [r_i, s_i], \quad i \geq 1,
			\vspace{1ex}\\
			\sigma^1\big(X^{\sigma^\star}_{\tau_{s_{i}}}\big),
			& \abs{X^{\sigma^\star}_t} \in (s_{i}, r_{i+1}), \quad i \geq 1,
	\end{cases}
\end{equation}
where, for each $i \geq 1$, we define the hitting time $\tau_{s_{i}} := \inf\left\{t \geq 0 \colon \abs{X^{\sigma^\star}_t} = s_i\right\}$. Note that $t \mapsto \abs{X^{\sigma^\star}_t}$ is deterministically increasing when $\abs{X^{\sigma^\star}_t} \in [r_i, s_i]$, for any $i \geq 1$, by \Cref{lem:squared-radius}. Therefore, if $\abs{X^{\sigma^\star}_0} \geq r_1$, then $\abs{X^{\sigma^\star}_t} \geq r_1$ for all $t \geq 0$. By \Cref{rem:weak-control}, $\sigma^\star$ defines a weak control.

Similarly, if $\tilde{f}$ is decreasing at the origin, we conjecture that there is a sequence of points $0 = r_0 < s_0 < r_1 < \dotsc < R$ such that an optimal controlled process $X^{\sigma^\star}$ for the weak value function is defined as follows. Let $X^{\sigma^\star}$ be a weak solution of the SDE $\D X_t = \sigma^\star_t(X) \D B_t$, where
\begin{equation}\label{eq:conj-optimal-control-decr}
	\sigma^\star_t(X^{\sigma^\star}) = 
	\begin{cases}
			\sigma^0\left(X^{\sigma^\star}_t\right), & \abs{X^{\sigma^\star}_t} \in (0, s_0],
			\vspace{1ex}\\
			\sigma^1\big(X^{\sigma^\star}_{\tau_{s_i}}\big),
			& \abs{X^{\sigma^\star}_t} \in (s_{i - 1}, r_{i}), \quad i \geq 1,
			\vspace{1ex}\\
			\sigma^0\left(X^{\sigma^\star}_t\right),
			& \abs{X^{\sigma^\star}_t} \in [r_i, s_i], \quad i \geq 1.
	\end{cases}
\end{equation}
Once again, by \Cref{rem:weak-control}, $\sigma^\star$ defines a weak control. Note that, in this second case, we do not make any claim about the optimal behaviour at the origin. Since $\sigma^0(0)$ is not defined, we will require some approximation at the origin in this case. We explore this further in \Cref{sec:exploding-cost} where we relax \Cref{ass:rad-symm}.

In either case, we conjecture that, at any time, an optimally controlled process $X^{\sigma^\star}$ should follow either radial motion or tangential motion, depending only on the current radial position of the process. At the switching points $s_i$, there is a reflection phenomenon. We conjecture that the behaviour of the process $X^{\sigma^\star}$ on these boundaries is similar to that of \emph{sticky Brownian motion} --- see Warren \cite{warren1997branching} --- and that $X^{\sigma^\star}$ cannot be defined as a strong solution of an SDE. However, in this paper, we only work with the process $X^{\sigma^\star}$ defined in the weak sense given above and do not investigate these properties further.

 We present a simulated trajectory of such a controlled process for an example with two switching points in \Cref{fig:switching}. In \Cref{prop:radial-symmetric-value}, we will prove that the control $\sigma^\star$ is optimal for the weak value function $v^W$.

\begin{figure}[h]
	\centering
	\includegraphics[width = 0.4\textwidth]{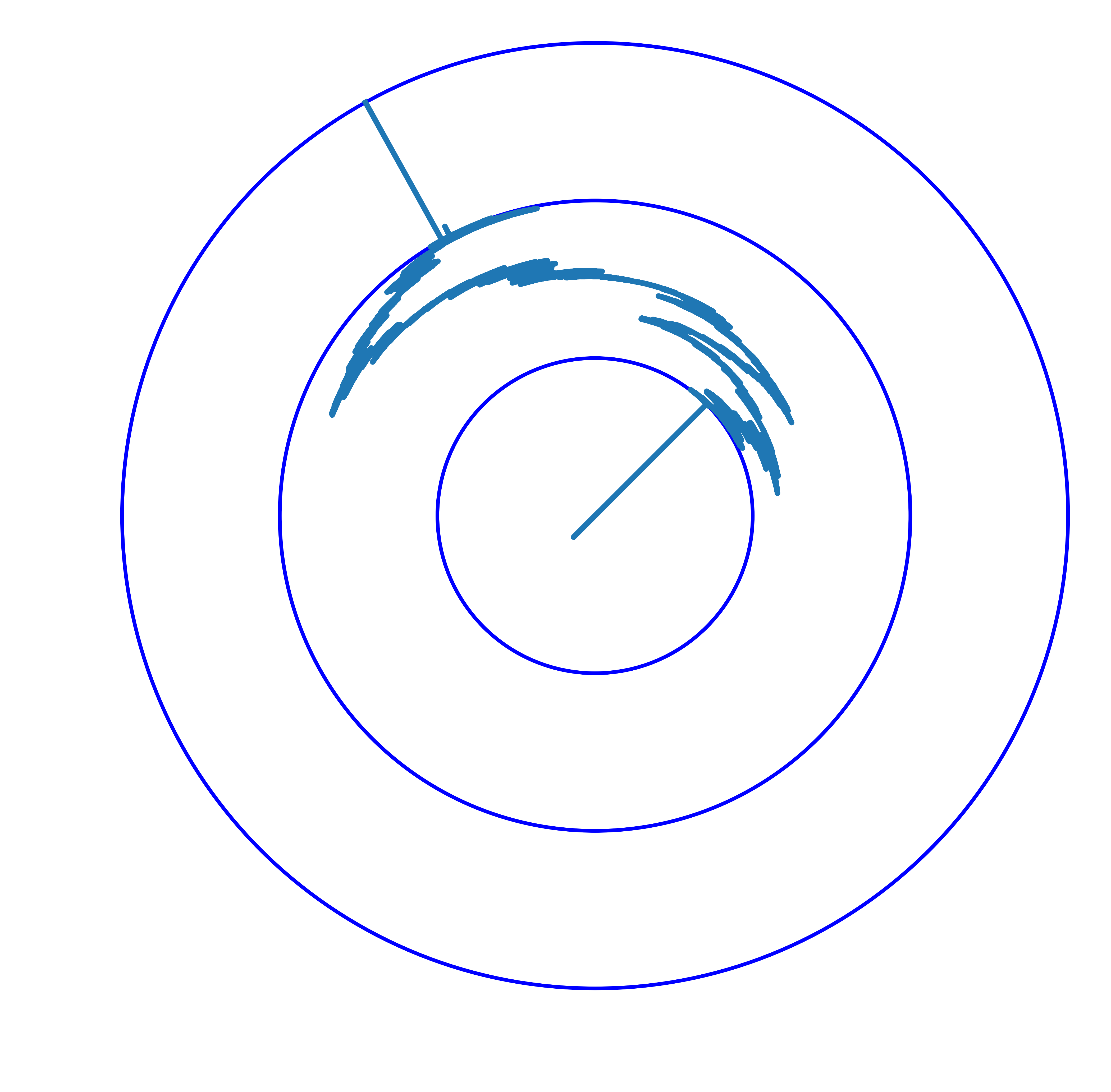}
	\caption[A sample path of an optimal controlled process in a case with two switching points]{A sample path of an optimal controlled process in a case with two switching points}
	\label{fig:switching}
\end{figure}

\subsection{Reduction to a switching problem}\label{sec:switching-problem}

Before beginning to construct a candidate for the value function, we give the following justification for our conjecture that switching between radial motion and tangential motion is optimal. We will work with the radius of the controlled process in this section. We now derive an SDE for the radius process under some simplifying assumptions.

\begin{prop}\label{prop:radius-sde}
	Let $\sigma \in \mathcal{U}$ be of the form  $\sigma_t =
		\begin{bmatrix}
			\overline{\sigma}_t ; & 0 ; & \dotsc ; & 0
		\end{bmatrix}$,
	where $\overline{\sigma}_t \in \RR^d$ with $\abs{\overline{\sigma}_t} = 1$, for $t \geq 0$. Let $x \in D \setminus \{0\}$ and suppose that $X^\sigma$ solves the SDE
	\begin{equation}
		\D X^\sigma_t = \sigma_t \D B_t = \overline{\sigma}_t \D W_t; \quad X^\sigma_0 = x,
	\end{equation}
	where $W$ is the first component of $B$. Set $r_0 := \abs{x}$, let $\varepsilon \in (0, r)$, and define $\tau_\varepsilon := \inf \left\{t > 0 \colon \abs{R^\lambda_t - r_0} = \varepsilon \right\}$. Then there exists a $[0, 1]$-valued process $\lambda$ such that $\abs{X^\sigma_t} = R^\lambda_t$, where $R^\lambda$ solves the SDE
	\begin{equation}
		\D R^\lambda_t = \lambda_t \D W_t + \frac{1 - \lambda_t^2}{2 R^\lambda_t} \D t, \quad t \in [0, \tau_\varepsilon]; \quad R^\lambda_0 = r_0.
	\end{equation}
\end{prop}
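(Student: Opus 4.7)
The plan is to apply It\^o's formula to $R_t := \lvert X^\sigma_t\rvert$ via the squared radius $Z^\sigma_t = \lvert X^\sigma_t\rvert^2$ already studied in \Cref{lem:squared-radius}, restricting to the interval $[0,\tau_\varepsilon]$ on which $R_t \in [r_0-\varepsilon, r_0+\varepsilon]$ stays bounded away from the origin so that $x \mapsto \sqrt{\lvert x\rvert^2}$ is smooth on the relevant range.

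First I would specialise \Cref{lem:squared-radius} to the current form of $\sigma$. Since $\sigma_t \D B_t = \overline{\sigma}_t \D W_t$, the SDE for $Z^\sigma$ reduces to
\begin{equation*}
\D Z^\sigma_t = 2 (X^\sigma_t)^\top \overline{\sigma}_t \D W_t + \D t,
\end{equation*}
with $\D \langle Z^\sigma\rangle_t = 4\bigl((X^\sigma_t)^\top \overline{\sigma}_t\bigr)^2 \D t$. Applying It\^o's formula to $R_t = \sqrt{Z^\sigma_t}$ then gives
\begin{equation*}
\D R_t = \frac{(X^\sigma_t)^\top \overline{\sigma}_t}{R_t} \D W_t + \frac{1}{2 R_t} \D t - \frac{\bigl((X^\sigma_t)^\top \overline{\sigma}_t\bigr)^2}{2 R_t^3} \D t.
\end{equation*}
Setting $\mu_t := (X^\sigma_t)^\top \overline{\sigma}_t / R_t$, the Cauchy--Schwarz inequality together with $\lvert \overline{\sigma}_t\rvert = 1$ gives $\mu_t \in [-1,1]$, and the SDE collapses to
\begin{equation*}
\D R_t = \mu_t \D W_t + \frac{1 - \mu_t^2}{2 R_t} \D t.
\end{equation*}

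To convert this to a $[0,1]$-valued coefficient, I would set $\lambda_t := \lvert \mu_t\rvert$ and absorb the sign into the Brownian motion. The drift depends only on $\mu_t^2 = \lambda_t^2$, so the only issue is the martingale part. Defining $\widetilde{W}_t := \int_0^t \mathrm{sgn}(\mu_s) \D W_s$, L\'evy's characterisation shows that $\widetilde{W}$ is a Brownian motion (using the convention $\mathrm{sgn}(0) := 1$, and noting that the quadratic variation is $\int_0^t \mathrm{sgn}(\mu_s)^2 \D s = t$), and $\int_0^t \mu_s \D W_s = \int_0^t \lambda_s \D \widetilde{W}_s$. Thus $R$ satisfies the desired SDE driven by $\widetilde{W}$ with $\lambda \in [0,1]$.

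The main subtlety is the identification of the driving Brownian motion in the final SDE with the original $W$ from the statement: strictly speaking the $W$ appearing in the conclusion should be read as the Brownian motion $\widetilde W$ constructed above (which coincides with the original $W$ when $\mu_t$ has a fixed sign, e.g.\ under radial motion), reflecting the fact that the radius process is intrinsically one-dimensional and its law is determined by $\lambda_t^2$ alone. Well-posedness of all the stochastic integrals and quotients by $R_t$ on $[0,\tau_\varepsilon]$ follows from the definition of $\tau_\varepsilon$ as the first time $R$ leaves a compact subinterval of $(0,\infty)$, which also ensures that $R^\lambda_0 = r_0$ and that the matching of $R$ with $R^\lambda$ holds up to $\tau_\varepsilon$.
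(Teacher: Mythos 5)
Your proof is correct and arrives at the same SDE via essentially the same mechanism (It\^o's formula applied to the radius). The main stylistic difference is that the paper applies It\^o directly to $\abs{X^\sigma_t}$ and then substitutes a geometric decomposition $\overline{\sigma}_t = \abs{X^\sigma_t}^{-1}\left(\lambda_t X^\sigma_t + \mu_t (X^\sigma_t)^\perp \right)$ into the resulting SDE, which makes the identification of $\lambda_t$ with the ``radial component'' of the control explicit and ties in naturally with the radial/tangential dichotomy used throughout the paper; whereas you route through $Z^\sigma = \abs{X^\sigma}^2$ and \Cref{lem:squared-radius}, read off the coefficient $\mu_t = (X^\sigma_t)^\top\overline\sigma_t/R_t$, and bound it via Cauchy--Schwarz. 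Both are correct; your version reuses machinery already in the paper, while the paper's version is more geometrically transparent. One genuine improvement in your write-up: the paper derives $\lambda_t^2+\mu_t^2=1$ and then asserts $\lambda_t\in[0,1]$, which silently ignores the possibility $\lambda_t<0$. You address this explicitly by setting $\lambda_t := \abs{\mu_t}$ and replacing $W$ with $\widetilde W_t := \int_0^t \mathrm{sgn}(\mu_s)\,\D W_s$, noting correctly that the driving Brownian motion in the conclusion should then be read as $\widetilde W$ rather than the original $W$ (this has no downstream consequences since only $\lambda^2$ enters the generator). So your proof is, if anything, slightly more careful on this point than the paper's.
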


\begin{proof}
	Let $t \in [0, \tau_\varepsilon]$, so that $X^\sigma_t \neq 0$. Then we can apply It\^o's formula to find that the radius of $X^\sigma$ satisfies the SDE
	\begin{equation}\label{eq:ito-radius}
		\D \abs{X^\sigma_t} = \abs{X^\sigma_t}^{-1}(X^\sigma_t)^\top \overline{\sigma}_t\D W_t + \frac{1}{2}\abs{X^\sigma_t}^{-3} \trace\left(\left[\abs{X^\sigma_t}^2 I - X^\sigma_t(X^\sigma_t)^\top\right]\overline{\sigma}_t\overline{\sigma}_t^\top\right)\D t.
	\end{equation}
	
	Now let $(X^\sigma_t)^\perp$ denote the vector with norm $\abs{(X^\sigma_t)^\perp} = \abs{X^\sigma_t}$ that is orthogonal to the vector $X^\sigma_t$ and satisfies
	\begin{equation}\label{eq:orthogonal}
		\overline{\sigma}_t = \abs{X^\sigma_t}^{-1}\left(\lambda_t X^\sigma_t + \mu_t (X^\sigma_t)^\perp \right),
	\end{equation}
	for some $\lambda_t, \mu_t \in \RR$. Using the condition $\abs{\overline{\sigma}_t} = 1$, we see that
	\begin{equation}
		1 = \lambda_t^2 + \mu_t^2,
	\end{equation}
	and so $\lambda_t \in [0, 1]$ and $\mu_t = \sqrt{1 - \lambda_t^2}$.
	
	Substituting the expression \eqref{eq:orthogonal} for $\overline{\sigma}$ back into the SDE \eqref{eq:ito-radius} for $\abs{X^\sigma}$, and repeatedly using the identities $(X^\sigma_t)^\top X^\sigma_t = \abs{X^\sigma_t}^2$ and $(X^\sigma_t)^\top (X^\sigma_t)^\perp = 0$, we have
	\begin{equation}
		\D \abs{X^\sigma_t} = \lambda_t\D W_t + \frac{1}{2}\abs{X^\sigma_t}^{-1} \left(1 - \lambda_t^2 \right)\D t.
	\end{equation}
	Therefore, writing $R^\lambda_t = \abs{X^\sigma_t}$, where $\lambda_t$ is defined via \eqref{eq:orthogonal}, we arrive at the desired form of the SDE.
\end{proof}

Now, suppose further that the process $\lambda$ in the proof of \Cref{prop:radius-sde} takes the form
\begin{equation}
	\lambda_t = \lambda(R^\lambda_t), \quad t \geq 0.
\end{equation}
Then we can write down the infinitesimal generator $\mathcal{L}^\lambda$ for the process $R^\lambda$ as
\begin{equation}
	\mathcal{L}^\lambda u(r) = - \frac{1}{2}\lambda^2(r) u^{\prime\prime}(r) - \frac{1 - \lambda^2(r)}{2 r}u^\prime(r),
\end{equation}
for $r \in (r_0 - \varepsilon, r_0 + \varepsilon)$ and any smooth function $u \in C^2((r_0 - \varepsilon, r_0 + \varepsilon), \RR)$.

Consider the following simplification of the control problem. Restrict the control set to contain only those controls that give rise to a process $\lambda$ of the form specified above. Let $v^R:D \to \RR$ be the value function of this simplified problem. By radial symmetry, we can write
\begin{equation}
	v^R(x) = \tilde{v}^R(\abs{x}),
\end{equation}
for some $\tilde{v}^R: [0, R) \to \RR$. Supposing that $\tilde{v}^R$ is twice continuously differentiable, we expect $\tilde{v}^R$ to be a classical solution of a Hamilton-Jacobi-Bellman equation. By the results of Section 3.3 of \cite{touzi_optimal_2013}, $\tilde{v}^R$ solves
\begin{equation}
	\inf_{\lambda}\mathcal{L}^\lambda \tilde{v}^R = \tilde{f},
\end{equation}
in the interval $(r_0 - \varepsilon, r_0 + \varepsilon)$, where the infimum is taken over functions $\lambda:(r_0 - \varepsilon, r_0 + \varepsilon) \to [0, 1]$.

Note that we can rewrite the generator as
\begin{equation}
	\mathcal{L}^\lambda \tilde{v}^R(r) = - \frac{1}{2r} (\tilde{v}^R)^\prime(r) - \frac{r}{2}\lambda^2(r) \left[\frac{1}{r}(\tilde{v}^R)^\prime(r)\right]^\prime.
\end{equation}
Hence, at points $r$ such that $\left[\frac{1}{r}(\tilde{v}^R)^\prime(r)\right]^\prime > 0$, the infimum is attained for $\lambda(r) = 1$, while at points $r$ such that $\left[\frac{1}{r}(\tilde{v}^R)^\prime(r)\right]^\prime < 0$, the infimum is attained for $\lambda(r) = 0$. At a point $r$ such that $\left[\frac{1}{r}(\tilde{v}^R)^\prime(r)\right]^\prime = 0$, the infimum is attained for any value $\lambda(r) \in [0, 1]$.

Returning to the expression \eqref{eq:orthogonal} for $\overline{\sigma}$ in terms of $\lambda$, we see that setting $\lambda_t = 1$ gives $\overline{\sigma}_t = \frac{X^\sigma_t}{\abs{X^\sigma_t}}$, with generator $\mathcal{L}^1$ given by
\begin{equation}\label{eq:generator-radial}
	\mathcal{L}^1 u(r) = -\frac{1}{2}u^{\prime \prime}(r).
\end{equation}
Note that, away from the origin, a controlled process following this control has the same behaviour as radial motion, as defined in \Cref{def:radial}. On the other hand, $\lambda = 0$ corresponds to tangential motion, as defined in \Cref{def:tangential}, with generator
\begin{equation}\label{eq:generator-tangential}
	\mathcal{L}^0 u(r) = -\frac{1}{2r} u^\prime(r).	
\end{equation}
Therefore the above calculations support our claim that the optimal strategy should be to switch between these two behaviour regimes.

\begin{remark}\label{rem:general-control}
	The above heuristic reasoning also applies to control problems of the form \eqref{eq:control-mot} from \Cref{sec:mot}. Therefore we expect a solution $u(x) = \tilde u(|x|)$ of the HJB equation \eqref{eq:hjb-mot} to satisfy
	\begin{equation}
		\inf_{\lambda \in [0, 1]} \left\{\mathcal L^\lambda \tilde u(r) + \lambda^2 r^2 \tilde g(r)\right\} = \tilde h(r),
	\end{equation}
	where $\mathcal L^\lambda$ is the generator of $R^\sigma$, with the relationship between $\sigma$ and $\lambda$ given by \eqref{eq:orthogonal}. Continuing to follow the above reasoning, we expect that switching between radial and tangential motion is also optimal for \eqref{eq:control-mot}.
\end{remark}

We note that, in the above discussion, we restricted the control set and made the strong assumption that the value function is twice continuously differentiable. In order to prove that the behaviour described above is optimal without these restrictions, we will need to refer to the theory of viscosity solutions for HJB equations that we summarise in \Cref{app:dpp-comparison}.

We now identify the conjectured optimal switching points and construct a candidate for the value function, before proving optimality in \Cref{prop:radial-symmetric-value}.

\subsection{Optimal switching points}\label{sec:switching-points}

With the justification of the previous section, we make the ansatz that there exists an optimal strategy of the form described in either \eqref{eq:conj-optimal-control-incr} or \eqref{eq:conj-optimal-control-decr}. We now seek the optimal switching points $r_i$ and $s_i$.

We will find that we require continuous fit and a condition on the first derivative to fix the points $r_i$, and we will need to impose smooth fit and a condition on the second derivative to fix the points $s_i$. It is interesting to note that smooth fit also holds at the points $r_i$, although we do not enforce it.

Under the conjectured optimal behaviour, the value function is of the form
\begin{equation}
	v(x) = \tilde{v}(\abs{x}), \quad x \in D,
\end{equation}
for some $\tilde{v} : [0, R) \to \RR$. To identify the optimal switching points, we will assume that $\tilde{v}$ is differentiable in the interval $(0, R)$ and satisfies the boundary condition $\tilde{v}(R) = g$. Then, for any $r \in (0, R)$, we have
\begin{equation}\label{eq:value-integral}
	\tilde{v}(r) = - \int_r^R \tilde{v}^\prime(s) \D s.
\end{equation}
When we verify our candidate for the value function in \Cref{prop:radial-symmetric-value}, we will show that $v$ is in fact continuously differentiable in $D$ and attains the boundary condition $v = 0$ on $\partial D$.

By definition of the value function, the expected cost associated to any admissible control at some radius $r \in (0, R)$ is greater than the value $\tilde{v}(r)$. Therefore the derivative of such an expected cost at some $r \in (0, R)$ must be less than the derivative of the value function $\tilde{v}^\prime(r)$. We will use this observation to determine the optimal switching points.

Let $\tilde{V}: [0, R] \to \RR$ and define a candidate value function $V: D \to \RR$ by $V(x) = \tilde{V}(\abs{x})$ for $x \in D$. The first step in constructing this function $V$ is to find the optimal switching points, as follows.

Suppose that there exists some $i \geq 1$ such that $0 < s_{i - 1} < r_i < R$. Then we expect that the optimal control switches from tangential motion to radial motion at the point $s_{i - 1}$. In some interval $(s, s_{i - 1})$, we set $\tilde{V} = w_{i - 1}$, where $w_{i - 1}$ solves the ODE
\begin{equation}
	\mathcal{L}^0w_{i - 1}(r) = - 2 r \tilde{f}(r),
\end{equation}
and $\mathcal{L}^0$ is the generator associated to tangential motion that is defined in \eqref{eq:generator-tangential}. This ODE is equivalent to the first order ODE
\begin{equation}
	w_{i - 1}^\prime(r) = - 2 r\tilde{f}(r).
\end{equation}
In the interval $(s_{i - 1}, r_i)$, we set $\tilde{V} = u_i$, where $u_i$ solves the ODE
\begin{equation}
	\mathcal{L}^1 u_i(r) = \tilde{f}(r),
\end{equation}
and $\mathcal{L}^1$ is the generator associated to radial motion that is defined in \eqref{eq:generator-radial}. We can write this ODE as
\begin{equation}\label{eq:switching-ode-radial}
	u_i^{\prime \prime}(r) = - 2 \tilde{f}(r).
\end{equation}
We fix the boundary conditions
\begin{equation}
	u_i(s_{i - 1}) = w_{i - 1}(s_{i - 1}), \qandq {u_i^\prime}_+(s_{i - 1}) = w_{i - 1}^\prime(s_{i - 1}) = - 2 s_{i - 1}\tilde{f}(s_{i - 1}),
\end{equation}
to define $u_i$ uniquely.

Now, in the interval $(r_i, s_i \wedge R)$, we suppose that tangential motion is optimal and set $\tilde{V} = w_i$, where $w_i$ solves the first order ODE
\begin{equation}
	w_i^\prime(r) = - 2 r\tilde{f}(r).
\end{equation}
We then have the following free boundary problem:
\begin{equation}\label{eq:free-boundary-1}
	\begin{cases}
		\tilde{V}^{\prime \prime}(r) = - 2 \tilde{f}(r), & r \in (s_{i - 1}, r_i),\\
		\tilde{V}^\prime(r) = - 2 r \tilde{f}(r), & r \in (r_i, s_i \wedge R),\\
		\tilde{V}(r_i +) = \tilde{V}(r_i -),
	\end{cases}
\end{equation}
where the point $r_i$ is to be found. Note that we require the continuous fit condition at $r_i$ in order to solve the first order ODE in $(r_i, s_i \wedge R)$.

As noted above, we determine the switching point by comparing the derivatives of $u_i$ and $w_i$. The point $r_i$ should be the first point at which $w_i^\prime(r) = - 2 r \tilde{f}(r)$ is greater than the first derivative of $u_i$. Therefore we define $r_i$ by
\begin{equation}
	r_i := \inf\left\{r > s_{i - 1} \colon s_{i - 1}\tilde{f}(s_{i - 1}) + \int_{s_{i - 1}}^r \tilde{f}(s) \D s > r \tilde{f}(r)\right\}.
\end{equation}
That is the first point after $s_{i - 1}$ at which the running average of the cost function becomes greater than its current value. Note that this point cannot be in a region where $\tilde{f}$ is increasing and so $r_i$ is greater than or equal to the first point of decrease of $\tilde{f}$ after $s_{i - 1}$.

In \Cref{subfig:switching-example-first-derivatives}, we show an example of choosing the switching point $r_1$ by comparing derivatives. We see in \Cref{subfig:switching-example-cost} that, for this example, the switching point $r_1$ is strictly greater than the turning point at which the cost function starts to decrease. Also note that, although we have only imposed continuous fit at the point $r_1$, we can see in \Cref{subfig:switching-example-first-derivatives} that the derivatives are equal at $r_1$. For any continuous cost function, this smooth fit property arises in the same way; we will discuss this in detail in \Cref{sec:smooth-fit}.

\begin{figure}[h]
	\centering
	\begin{subfigure}{0.6\textwidth}
		\centering
		\includegraphics[width = \textwidth]{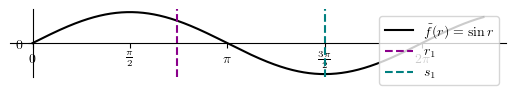}
		\caption{Radial part of the cost function $\tilde{f}(r) = \sin r$}
		\label{subfig:switching-example-cost}
	\end{subfigure}\\
	\begin{subfigure}{0.6\textwidth}
		\centering
		\includegraphics[width = \textwidth]{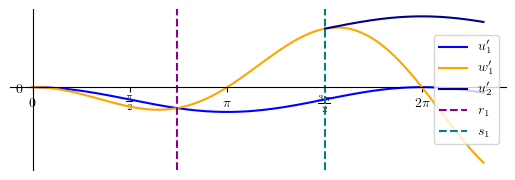}
		\caption{First derivatives of the expected costs $u_1$, $w_1$, and $u_2$}
		\label{subfig:switching-example-first-derivatives}
	\end{subfigure}\\
	\begin{subfigure}{0.6\textwidth}
		\centering
		\includegraphics[width = \textwidth]{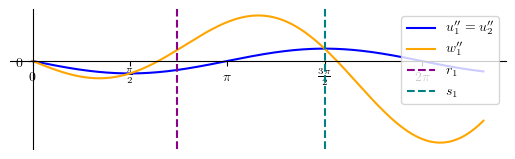}	
		\caption{Second derivatives of the expected costs $u_1$, $w_1$, and $u_2$}
		\label{subfig:switching-example-second-derivatives}
	\end{subfigure}
	\caption[Switching points for the cost function $f(x) = \sin(\abs{x})$]{The first two switching points $r_1$, $s_1$ are shown for the cost function $f(x) = \sin \abs{x}$. The switching point $r_1$ is the first point at which $w_1^\prime(r) = - 2 r \tilde{f}(r)$ exceeds $u_1^\prime$, where $u_1$ solves $u_1^{\prime \prime}(r) = - 2 \tilde{f}(r)$, with ${u_1}^\prime_+(0) = 0$, as shown in (\textsc{b}). The switching point $s_1$ is the first point after $r_1$ at which $u_2^{\prime \prime} = - 2 \tilde{f}$ exceeds $w_1^{\prime \prime}$, as shown in (\textsc{c}). Fixing $u_2^\prime(s_1) = w_1^\prime(s_1)$, we see in (\textsc{b}) that $s_1$ is chosen such that $u_2^\prime$ remains greater than $w_1^\prime$ over an interval of positive length.}
	\label{fig:switching-example}
\end{figure}

Let us now suppose that $s_i < R$. We suppose that, in the interval $(s_i, r_{i + 1} \wedge R)$, radial motion is once again optimal, and we set $\tilde{V} = u_{i + 1}$, where $u_{i + 1}$ solves the second order ODE
\begin{equation}
	u_{i + 1}^{\prime \prime}(r) = - 2 \tilde{f}(r).	
\end{equation}
Then we have a second free boundary problem
\begin{equation}\label{eq:free-boundary-2}
	\begin{cases}
		\tilde{V}^\prime(r) = - 2 r \tilde{f}(r), & r \in (r_i, s_i),\\
		\tilde{V}^{\prime \prime}(r) = - 2 \tilde{f}(r), & r \in (s_i, r_{i + 1} \wedge R),\\
		\tilde{V}(s_i +) = \tilde{V}(s_i -),\\
		\tilde{V}^\prime_+(s_i) = \tilde{V}^\prime_-(s_i),
	\end{cases}
\end{equation} 
where the point $s_i$ is to be found. Here we require both smooth fit and continuous fit at the point $s_i$ in order to solve the second order ODE in the interval $(s_i, r_{i + 1} \wedge R)$.

Having imposed the smooth fit condition $\tilde{V}^\prime_+(s_i) = \tilde{V}^\prime_-(s_i)$, the first derivatives of solutions of $w_{i}^\prime(r) = - 2 r \tilde{f}(r)$ and $u_{i + 1}^{\prime \prime}(r) = - 2 \tilde{f}(r)$ are equal for any choice of $s_i$. In order to fix the point $s_i$, we require a second order condition. Recall from \Cref{ass:rad-symm} that we assume that the right derivative of $\tilde{f}$ exists everywhere. This allows us to define $s_i$ to be the first point at which $u_{i + 1}^{\prime \prime}(r) = - 2 \tilde{f}(r)$ is greater than the one-sided second derivative from the right of the solution of $w_{i}^\prime(r) = - 2 r \tilde{f}(r)$. Thus there is an interval of positive length on which the first derivatives are in this same order.

We can calculate the one-sided second derivative from the right of $w_i$ as
\begin{equation}
	{w_i^{\prime \prime}}_+(r) = - 2 \tilde{f}(r) - 2 r \tilde{f}^{\prime}_+(r).
\end{equation}
This leads us to define $s_i$ by
\begin{equation}
	s_i := \inf \left\{s > r_i \colon \tilde{f}^\prime_+(s) > 0\right\}.
\end{equation}
In this case, the switching point is exactly the turning point at which $\tilde{f}$ starts to increase. For the example in \Cref{fig:switching-example}, we can see that the switching point $s_1$ does indeed coincide with this turning point. \Cref{subfig:switching-example-second-derivatives} shows how this switching point is chosen by comparing second derivatives, and \Cref{subfig:switching-example-first-derivatives} shows that the first derivatives at this point have the desired properties.

Note that the sixth condition of \Cref{ass:rad-symm} implies that there are finitely many switching points $s_i$ and thus finitely many points $r_i$. Taking the above definitions of $r_i$ and $s_j$ for all values of $i, j$ such that $r_i, s_j < R$, we now solve the ODEs in \eqref{eq:free-boundary-1} and \eqref{eq:free-boundary-2} to construct a candidate for the value function.

\subsection{Construction of the value function}\label{sec:construction}

In this section we construct the candidate function $V$, which we will go on to prove is equal to the value function. We break the construction down into two cases depending on the behaviour of the cost function at the origin, and then into two further sub-cases depending on the behaviour of the cost function at the boundary of the domain.

\subsubsection{Case I: Increasing cost at the origin}

Suppose first that $\tilde{f}$ is increasing on the interval $(0, \eta)$.  We summarise the definition of switching points and the construction of the candidate value function in this case in \Cref{alg:solution-incr}.

\begin{algorithm}[h]
\caption[Construction of the value function I]{Construction of the value function in Case I}\label{alg:solution-incr}
\begin{algorithmic}
	\State{Define $s_0 = 0$.}
	\State{Solve $u_1^{\prime\prime}(r) = - 2 \tilde{f}(r)$, with ${u_1}^\prime_+(0) = 0$, $u_1(0) = \alpha$, for some $\alpha \in \RR$.}
	\State{Define $r_1 := \inf\left\{r > 0 \colon \int_0^r \tilde{f}(s) \D s > r \tilde{f}(r)\right\}$.}
	\State{Set $\tilde{V} = u_1$ on $(0, r_1 \wedge R]$.}
	\If{$r_1 < R$}
		\For{$i \geq 1$}
			\State{Solve $w_i^\prime(r) = -2 r\tilde{f}(r)$, with $w_i(r_i) = u_i(r_i).$}
			\State{Define $s_{i}:=\inf\left\{r > r_i \colon \tilde{f}^\prime_+(s) > 0\right\}$.}
			\State{Set $\tilde{V} = w_i$ on $(r_i, s_i \wedge R]$.}
			\If{$s_i \geq R$}
				\Break
			\EndIf
			\State{Solve $u_{i+1}^{\prime\prime}(r) = -2\tilde{f}(r)$, with $u_{i+1}^\prime(s_{i}+) = -2s_{i}\tilde{f}(s_{i})$ and \newline $u_{i+1}(s_{i})= w_i(s_{i}).$}
			\State{Define $r_{i+1} := \inf\left\{r > s_i \colon s_i \tilde{f}(s_i) + \int_{s_i}^r \tilde{f}(s) \D s > r \tilde{f}(r)\right\}$.}
			\State{Set $\tilde{V} = u_{i + 1}$ on $(s_i , r_{i + 1} \wedge R]$.}
			\If{$r_{i+1} \geq R$}
				\Break
			\EndIf
		\EndFor
	\EndIf
	\State{Fix $\alpha$ such that $\tilde{V}(R) = 0$.}
\end{algorithmic}	
\end{algorithm}

Fix $s_0 = 0$. Since we expect the optimal control to enforce radial motion in the ball $B_\eta(0)$, we solve the second order ODE
\begin{equation}
	u_1^{\prime \prime}(r) = - 2 \tilde{f}(r), \quad r \in (0, R).
\end{equation}
We require two boundary conditions in order to uniquely define the solution $u_1$. We impose the boundary condition ${u_1}^\prime_+(0) = 0$ for the following reasons.

First, from the discussion in the previous section, we recall that we will define the first switching point to be
\begin{equation}
	r_1 = \inf\left\{r > 0 \colon u_1^\prime(r) < - 2 r \tilde{f}(r)\right\},
\end{equation}
since we are seeking to maximise the derivative of the candidate value function. Therefore, for $r \in (0, r_1)$, we must have $u_1^\prime(r) \geq - 2 r \tilde{f}(r)$ and, in particular
\begin{equation}
	{u_1}^\prime_+(0) = \lim_{r \downarrow 0} u_1^\prime(r) \geq - 2 \lim_{r \downarrow 0}r \tilde{f}(r) = 0.
\end{equation}
To get the opposite inequality, fix $\delta \in (0, \eta)$ and $r \in (0, \delta)$ and apply It\^o's formula to $u_1(\delta) = u_1\big(\big\lvert X^{\sigma^1}_{\tau_\delta}\big\rvert\big)$ to see that
\begin{equation}
\begin{split}
	u_1(\delta) - u_1(r) = \frac{1}{2}\EE^r\left[\int_0^{\tau_\delta} u_1^{\prime \prime}\big(\big\lvert X^{\sigma^1}_{\tau_\delta}\big\rvert\big) \D s\right] = - \EE^r \left[\int_0^{\tau_\delta} \tilde{f}\big(\big\lvert X^{\sigma^1}_{\tau_\delta}\big\rvert\big) \D s\right].
\end{split}
\end{equation}
Then, applying dominated convergence to take the limit as $r \downarrow 0$, and using the fact that $\tilde{f}$ is increasing, we have that
\begin{equation}
\begin{split}
	\lim_{r \downarrow 0}\frac{1}{\delta}\left(u_1(\delta) - u_1(r)\right) & = - \frac{1}{\delta}\EE^0 \left[\int_0^{\tau_\delta}\tilde{f}\big(\big\lvert X^{\sigma^1}_{\tau_\delta}\big\rvert\big) \D s\right]\\
	& \leq - \frac{1}{\delta} \tilde{f}(0) \EE^0[\tau_\delta] = - \delta \tilde{f}(0).
\end{split}
\end{equation}
Hence
\begin{equation}
	0 \leq {u_1}^\prime_+(0) \leq - \lim_{\delta \downarrow 0}\delta \tilde{f}(0) = 0.
\end{equation}

As well as imposing the above condition on the first derivative, we also fix an arbitrary value $u_1(0) = \alpha \in \RR$. Having constructed the candidate value function, up to this arbitrary constant, on the whole domain, we will use the external boundary condition $\tilde{V}(R) = 0$ to determine the value of $\alpha$. We now have
\begin{equation}
	u_1(r) = \alpha - 2 \int_0^r \int_0^s \tilde{f}(t) \D t \D s.
\end{equation}
Define
\begin{equation}
	r_1 := \inf\left\{r > 0 \colon \int_0^r \tilde{f}(s) \D s > r\tilde{f}(r)\right\},
\end{equation}
and set $\tilde{V}(r) = u_1(r)$ for $r \in (0, r_1 \wedge R]$.

If $r_1 < R$, we then expect the optimal control to switch to enforcing tangential motion. Therefore we solve the first order ODE
\begin{equation}
	w_1^\prime(r) = - 2 r \tilde{f}(r), \quad r \in (r_1, R).
\end{equation}
In order to uniquely define the solution $w_1$, we impose the continuous fit condition $w_1(r_1) = \tilde{V}(r_1)$. Then we have
\begin{equation}
\begin{split}
	w_1(r) & = \tilde{V}(r_1) - 2 \int_{r_1}^r s \tilde{f}(s) \D s\\
	& = \alpha - 2 \int_{r_1}^r s \tilde{f}(s) \D s - 2 \int_0^{r_1}\int_0^s \tilde{f}(t) \D t \D s.
\end{split}
\end{equation}
Now define
\begin{equation}
	s_1 := \inf\left\{r > r_1 \colon \tilde{f}^\prime_+(r) > 0\right\},
\end{equation}
and set $\tilde{V}(r) = w_1(r)$ for $r \in (r_1, s_1 \wedge R]$.

If $s_1 < R$, then we expect the optimal control to switch back to enforcing radial motion, and so we solve the second order ODE
\begin{equation}
	u_2^{\prime \prime}(r) = -2 \tilde{f}(r), \quad r \in (s_1, R).
\end{equation}
At this point, we impose both the continuous fit condition $u_2(s_1) = \tilde{V}(s_1)$ and the smooth fit condition ${u_2}^\prime_+(s_1) = \tilde{V}^\prime(s_1)$ in order to uniquely define $u_2$. We then find that
\begin{equation}
	u_2^\prime(r) = \tilde{V}^\prime(s_1) - 2 \int_{s_1}^r \tilde{f}(s) \D s,
\end{equation}
and so
\begin{equation}
\begin{split}
	u_2(r) & = \tilde{V}(s_1) + (r - s_1) \tilde{V}^\prime(s_1) - 2 \int_{s_1}^r \int_{s_1}^s \tilde{f}(t) \D t \D s\\
	& = \alpha - 2 \int_{s_1}^r \int_{s_1}^s \tilde{f}(t) \D t \D s - 2 \int_0^{r_1}\int_0^s \tilde{f}(t) \D t \D s - 2 \int_{r_1}^{s_1} s \tilde{f}(s) \D s - 2(r_1 - s_1)r_1 \tilde{f}(r_1).
\end{split}
\end{equation}
Defining
\begin{equation}
	r_2 := \inf \left\{r > s_1 \colon s_1 \tilde{f}(s_1) + \int_{s_1}^r \tilde{f}(s) \D s > r \tilde{f}(r)\right\},
\end{equation}
we set $\tilde{V}(r) = u_2(r)$ for $r \in (s_1, r_2 \wedge R]$.

We continue in this way until reaching the boundary of the domain. For each $i \ge 1$, define recursively the switching points
\begin{equation}\label{eq:caseI-switching}
	\begin{split}
		s_{i} & :=\inf\left\{r > r_i \colon \tilde{f}^\prime_+(s) > 0\right\},\\
		r_{i+1} & := \inf\left\{r > s_i \colon s_i \tilde{f}(s_i) + \int_{s_i}^r \tilde{f}(s) \D s > r \tilde{f}(r)\right\},
	\end{split}
\end{equation}
and set
\begin{equation}
	\tilde{V}(r) =
	\begin{cases}
		u_{i}(r), & r \in (s_{i - 1}, r_i \wedge R],\\
		w_i(r), & r \in (r_i, s_i \wedge R].
	\end{cases}
\end{equation}

In order to determine the value of $\tilde{V}(0) = \alpha$, we use the boundary condition on $\partial D$. Let $K \in \NN$ be such that $R \in (s_{K - 1}, s_K]$. Suppose first that $R \in (s_{K - 1}, r_K]$. Then we expect radial motion to be optimal close to the boundary of the domain, and we have $\tilde{V}(r) = u_{K - 1}(r)$ for $r \in (s_{K}, R]$. Imposing the boundary condition $V(x) = 0$ for $x \in \partial D$, we have $u_{K}(R) = 0$. Now suppose that $R \in (r_K, s_K]$, so that we expect tangential motion to be optimal close to the boundary of the domain. Then we have $\tilde{V}(r) = w_K(r)$ for $r \in (r_K, R]$. Now imposing the boundary condition $V(x) = 0$ for $x \in \partial D$ gives us $w_K(R) = 0$. In either case, the value of $\alpha$ is then specified uniquely.

We state the candidate value function explicitly in \Cref{def:candidate-value} below.

\subsubsection{Case II: Decreasing cost at the origin}

We now turn to the second case where $\tilde{f}$ is decreasing on the interval $(0, \eta)$. We summarise the definition of a sequence of switching points and the construction of the candidate value function in this case in \Cref{alg:solution-decr}.

\begin{algorithm}[h]
\caption[Construction of the value function II]{Construction of the value function in Case II}\label{alg:solution-decr}
\begin{algorithmic}
	\State{Define $r_0 = 0$.}
	\State{Solve $w_0^\prime(r) = - 2 r \tilde{f}(r)$, with $w_0(r) = \alpha$, for some $\alpha \in \RR$.}
	\State{Define $s_0 := \inf\left\{r > 0 \colon \tilde{f}^\prime_+(r) > 0\right\}$}.
	\State{Set $\tilde{V} = w_0$ on $(0, s_0 \wedge R]$.}
	\If{$s_0 < R$}
		\For{$i \geq 0$}
			\State{Solve $u_{i+1}^{\prime\prime}(r) = - 2\tilde{f}(r)$, with $u_{i+1}^\prime(s_{i}+) = -2s_{i}\tilde{f}(s_{i})$ and \newline $u_{i+1}(s_{i})= w_i(s_{i}).$}
			\State{Define $r_{i+1} := \inf\left\{r > s_i \colon s_i \tilde{f}(s_i) + \int_{s_i}^r \tilde{f}(s) \D s > r \tilde{f}(r)\right\}$.}
			\State{Set $\tilde{V} = u_{i + 1}$ on $(s_i, r_{i + 1} \wedge R]$.}
			\If{$r_{i+1} \geq R$}
				\Break
			\EndIf
			\State{Solve $w_{i+1}^\prime(r) = -2 r\tilde{f}(r)$, with $w_{i+1}(r_{i+1}) = u_{i+1}(r_{i+1}).$}
			\State{Define $s_{i+1}:=\inf\left\{r > r_{i + 1} \colon \tilde{f}^\prime_+(r) > 0\right\}$.}
			\State{Set $\tilde{V}(R) = g$ on $(r_{i + 1}, s_{i + 1} \wedge R]$.}
			\If{$s_{i+1} \geq R$}
				\Break
			\EndIf
		\EndFor
	\EndIf
	\State{Fix $\alpha$ such that $\tilde{V}(R) = 0$.}
\end{algorithmic}	
\end{algorithm}

We expect the optimal control to enforce tangential motion in $B_\eta(0) \setminus B_\varepsilon(0)$, for any $\varepsilon \in (0, \eta)$. As we will see in \Cref{sec:exploding-cost}, it will be possible to define a control at the origin whose cost approximates the cost associated to tangential motion. Without further justification here, we fix $r_0 = 0$ and seek the solution $w_0$ to the first order ODE
\begin{equation}
	w_0^\prime(r) = - 2 r \tilde{f}(r), \quad r \in (0, R).
\end{equation}

Note that this ODE fixes the first derivative and, in particular, ${w_1}^\prime_+(0) = 0$. In order to uniquely define $w_1$, we need to impose one boundary condition. As in the previous section, we will fix an arbitrary value $w_1(0) = \alpha \in \RR$, and we will determine the value of $\alpha$ from the external boundary condition $\tilde{V}(R) = 0$, once we have constructed the candidate value function on the whole domain.

The construction of the value function proceeds in the same way as in Case I, here defining switching points by $r_0 = 0$ and, for each $i \ge 0$,
\begin{equation}\label{eq:caseII-switching}
	\begin{split}
		r_{i+1} & := \inf\left\{r > s_i \colon s_i \tilde{f}(s_i) + \int_{s_i}^r \tilde{f}(s) \D s > r \tilde{f}(r)\right\},\\
		s_{i+1} & :=\inf\left\{r > r_{i + 1} \colon \tilde{f}^\prime_+(r) > 0\right\}.
	\end{split}
\end{equation}
 We omit the remaining details in this case. We state the candidate value function in both cases in the following \Cref{def:candidate-value}.

\begin{defn}[Candidate value function]\label{def:candidate-value}
	Let the cost functions $f$ and $g$ be as in \Cref{ass:rad-symm}. For $k \in \NN$ and $i = 0, \dotso, k$, define the constant
	\begin{equation}
		\mathfrak{F}^k_i := 2\sum_{j = i + 1}^k \left[(r_j - s_{j - 1}) s_{j - 1} \tilde{f}(s_{j - 1}) + \int_{s_{j - 1}}^{r_j} \int_{s_{j -1}}^s \tilde{f}(t) \D t \D s + \int_{r_j}^{s_j} s \tilde{f}(s) \D s\right].
	\end{equation}
	Then we define the candidate value function $V: D \to \RR$ as follows.
	
	\paragraph{\emph{Case I}} If $\tilde{f}$ is increasing in $(0, \eta)$, then set $s_0 = 0$, define $(s_i, r_i)$ by \eqref{eq:caseI-switching}, and let $K \in \NN$ be such that $R \in (s_{K - 1}, s_K]$. For $x \in D$, define
	\begin{equation}
	\begin{split}
		V(x) & = - 2 \int_{R \vee r_K}^{s_K} s \tilde{f}(s) \D s - 2(r_K - R \wedge r_K) s_{K - 1} \tilde{f}(s_{K - 1}) - 2 \int_{R \wedge r_K}^{r_K} \int_{s_{K - 1}}^s \tilde{f}(t) \D t \D s\\
		& \quad + 2 \sum_{i = 1}^K \ind{(s_{i - 1}, s_i]}(\abs{x}) \left[(r_i - \abs{x} \wedge r_i) s_{i - 1} \tilde{f}(s_{\i -1 }) + \int_{\abs{x} \wedge r_i}^{r_i} \int_{s_{i - 1}}^s \tilde{f}(t) \D t \D s + \int_{\abs{x} \vee r_i}^{s_i} s \tilde{f}(s) \D s + \mathfrak{F}^K_i\right].
	\end{split}
	\end{equation}
	
	\paragraph{\emph{Case II}} If $\tilde{f}$ is decreasing in $(0, \eta)$, then set $r_0 = 0$, define $(r_i, s_i)$ by \eqref{eq:caseII-switching}, and let $L \in \NN$ be such that $R \in (r_L, r_{L + 1}]$. For $x \in D$, define
	\begin{equation}
	\begin{split}
		V(x) & = - 2 \int_{R \wedge s_L}^{s_L} s \tilde{f}(s) \D s + 2(R \vee s_L - s_L) s_L \tilde{f}(s_L) + 2 \int_{s_L}^{R \vee s_L} \int_{s_L}^s \tilde{f}(t) \D t \D s\\
		& \quad + 2 \sum_{i = 0}^L \ind{(r_i, r_{i + 1}]}(\abs{x}) \left[\int_{\abs{x} \wedge s_i}^{s_i} s \tilde{f}(s) \D s  - (\abs{x} \vee s_i - s_i) s_i \tilde{f}(s_i) - \int_{s_i}^{\abs{x} \vee s_i} \int_{s_i}^s \tilde{f}(t) \D t \D s + \mathfrak{F}^L_i \right].
	\end{split}
	\end{equation}
\end{defn}

Before turning to the rigorous proof of optimality in \Cref{sec:proof-optimality}, we make the following remarks on the candidate value function.

\begin{remark}
	We observe that we can recover the the value functions of \Cref{ex:step-decr} and \Cref{ex:step-incr} from the expressions for $V$ given in \Cref{def:candidate-value}. First consider \Cref{ex:step-decr} where $\tilde{f}(r) = -\mathds{1}_{(\rho, R)}(r)$. This example falls into Case II of \Cref{def:candidate-value}. Since $\tilde{f}$ is always decreasing, there are no switching points. Therefore the value function reduces to $V(x) = 2 \int_{|x|}^R s \tilde f(s) \D s$, $x \in D$, as given in \Cref{prop:step-value}.
	Now take $\tilde f(r) = -\mathds{1}_{[0, \rho)}(r)$, as in \Cref{ex:step-incr}. This example falls into Case I, and once again there are no switching points because $\tilde f$ is monotone. Hence the value function reduces to $V(x) = 2 \int_{|x|}^R \int_0^s \tilde{f}(t) \D t \D s$, $x \in D$, as in \Cref{prop:step-incr}.

	Note, however, that \Cref{ex:step-decr} and \Cref{ex:step-incr} are not covered by \Cref{prop:radial-symmetric-value} since the cost functions $f$ are not continuous.	
\end{remark}

\begin{remark}\label{sec:smooth-fit}
  In the preceding construction, the smooth fit condition is required to fix the switching points $s_i$. It is notable, however, that we do not need to impose smooth fit to uniquely identify the points $r_i$, and it is thus surprising to us that the smooth fit condition is nevertheless satisfied at these switching points. 

  A heuristic argument for the smooth fit condition in diffusion problems typically comes from two competing criteria. The heuristic we look to exploit is that we expect $\tilde{v}(\abs{X_t^\nu}) + \int_0^t \tilde{f}(\abs{X_s^\nu}) \D s$ to be a submartingale for all admissible strategies, and a martingale for the optimal strategy. Now suppose that the function $\tilde{v}$ displays a discontinuity in its first derivative at the boundary of two types of behaviour, at $|X| = r$ say. Then, applying the It\^o-Tanaka formula, we expect a local time term of the form $(\tilde{v}'_+(r)-\tilde{v}'_-(r)) \D L_t^r$ to appear in $\D \tilde{v}(|X_t|)$. Since there is no cancelling term in the time integral component, it follows immediately that if $\tilde{v}'_+(r) < \tilde{v}'_-(r)$, then the process will be a supermartingale for any strategy which has positive local time at $r$. Consequently, we expect $\tilde{v}'_+(r) \ge \tilde{v}'_-(r)$ for all $r \in (0,R)$. Moreover, \emph{if} the optimal strategy accrues local time at $r$, then a similar argument forces $\tilde{v}'_+(r) = \tilde{v}'_+(r)$, and we deduce the smooth fit condition.

  However, this justification breaks down at the switching points $r_i$ described above. Under the conjectured optimal strategy, there is no local time accrued at such switching points, and therefore the heuristic justification for smooth fit fails. However, from our construction of the value function, we see that the smooth fit condition still holds!

  In general, we have no heuristic justification for such a condition. We note that, in optimal stopping problems for L\'evy processes, or more generally jump diffusions, it is common to observe continuous fit conditions where there is no diffusive boundary behaviour \cite{kyprianou_principles_2007}. This is comparable to the behaviour that we observe under the optimal strategy at points $r_i$, which arises from the notable fact that our control process can produce both diffusive and non-diffusive behaviour at boundary points. We are unaware of similar behaviour occurring in a diffusive setting, and we leave further study of this behaviour for future work.
\end{remark}

\subsection{Proof of optimality}	\label{sec:proof-optimality}

We now turn to the proof that the candidate function that we have constructed is indeed the value function.

\begin{prop}\label{prop:radial-symmetric-value}
	Under \Cref{ass:rad-symm}, the value function $v$ is continuously differentiable and takes the form $v = V$, where $V$ is defined in \Cref{def:candidate-value}.
	
	Moreover, there exists an optimal weak control $\sigma^\star$ in the following cases. If $\tilde{f}$ is increasing in $(0, \eta)$, then the weak control $\sigma^\star$ defined via \eqref{eq:conj-optimal-control-incr} is optimal. If $\tilde{f}$ is decreasing in $(0, \eta)$ and the initial condition is $x \in D \setminus \{0\}$, then the weak control $\sigma^\star$ defined via \eqref{eq:conj-optimal-control-decr} is optimal.
\end{prop}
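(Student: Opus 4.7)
The plan is to show that the function $V$ constructed in \Cref{def:candidate-value} is the unique viscosity solution of the radially symmetric HJB equation
\begin{equation*}
	-\tfrac{1}{2}\inf_{\sigma \in U}\trace(D^2 V \sigma\sigma^\top) = f \quad \text{in } D, \qquad V = 0 \quad \text{on } \partial D,
\end{equation*}
and to appeal to the DPP and comparison principle summarised in \Cref{app:dpp-comparison}, which force $v = V$. Optimality of the weak control $\sigma^\star$ of \eqref{eq:conj-optimal-control-incr}--\eqref{eq:conj-optimal-control-decr} is then obtained by verifying directly that the proposed strategy realises the cost $V(x)$.

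First I would check that $V$ is continuously differentiable on $D$. On each open piece, $V$ is $C^2$ by construction. At each switching point $s_i$ the smooth fit condition was explicitly imposed in \Cref{sec:switching-points}. At the points $r_i$, only continuous fit was imposed, but a direct calculation using the very equation defining $r_i$ yields
\begin{equation*}
	\tilde V'(r_i-) = -2 s_{i-1}\tilde f(s_{i-1}) - 2\int_{s_{i-1}}^{r_i} \tilde f(s)\D s = -2 r_i \tilde f(r_i) = \tilde V'(r_i+),
\end{equation*}
producing the surprising smooth fit discussed in \Cref{sec:smooth-fit}.

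Next I would verify the viscosity property. By radial symmetry, the HJB reduces to the one-dimensional equation $\inf_{\lambda\in[0,1]}\mathcal L^\lambda \tilde V = \tilde f$ derived in \Cref{sec:switching-problem}, so it suffices to check $\lambda=0$ or $\lambda=1$ attains the infimum on the appropriate pieces. On a radial interval $\tilde V'' = -2\tilde f$, and the defining inequality for $r_i$ gives $\tilde V''(r) \leq \tilde V'(r)/r$, so $\lambda=1$ attains the infimum. On a tangential interval $(r_i,s_i)$, $\tilde V' = -2r\tilde f$ and the fact that $\tilde f$ is non-increasing there (by definition of $s_i$) gives $\tilde V''(r) \geq \tilde V'(r)/r$, so $\lambda = 0$ attains the infimum. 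Hence $V$ is a classical solution on each piece, and since $v$ is also a viscosity solution by the results of \Cref{app:dpp-comparison}, comparison yields $v = V$. At the switching points, where $V$ fails to be $C^2$, the viscosity sub- and supersolution inequalities must additionally be verified by testing against smooth functions touching $V$ from above and below, exploiting the $C^1$ regularity together with the one-sided control of $\tilde V''$; this is the step requiring the most care.

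Finally, for optimality, apply a generalised Itô formula to $V(X^{\sigma^\star}_t) + \int_0^t f(X^{\sigma^\star}_s)\D s$, which is justified since $V$ is $C^1$ and piecewise $C^2$. By construction $\sigma^\star$ selects the infimising value of $\lambda$ on each piece, so the drift vanishes and the resulting process is a local martingale; after localisation and taking expectation at $\tau$ using $V\equiv 0$ on $\partial D$, this yields $\EE^x[\int_0^\tau f(X^{\sigma^\star}_s)\D s] = V(x)$. The principal obstacle is the reflection/sticky behaviour of $X^{\sigma^\star}$ at the points $s_i$: working throughout with the weak formulation via the martingale problem $\mathcal P_x$ sidesteps the explicit construction of the sticky diffusion as a strong solution, and also explains why Case II is restricted to initial conditions $x \in D \setminus \{0\}$, since $\sigma^0$ is not defined at the origin.
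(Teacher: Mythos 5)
Your high-level strategy — show $V$ is a viscosity solution of the HJB with zero boundary data, invoke the comparison principle of \Cref{app:dpp-comparison}, then verify the martingale property of $V(X^{\sigma^\star}_t)+\int_0^t f(X^{\sigma^\star}_s)\,\D s$ under $\sigma^\star$ — matches the paper's proof, and your calculation that smooth fit falls out at the $r_i$ is correct and is exactly the observation the paper highlights. However, there are three substantive gaps between your sketch and a complete argument.

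First, you cannot simply replace the $d$-dimensional HJB by the one-dimensional equation $\inf_{\lambda\in[0,1]}\mathcal L^\lambda\tilde V = \tilde f$ and declare the job done. That reduction (i) restricts the control to rank-one matrices of a specific form, so it must be shown that no other $\sigma\in U$ does better, and (ii) is a statement about a radial ODE, while \Cref{thm:unique-viscosity} is a uniqueness result for the PDE in $\RR^d$, where test functions need not be radially symmetric. The paper resolves this by computing $D^2V(x)$ explicitly in terms of $|x|^2 I - xx^\top$ and showing directly (\Cref{lem:ex-incr-visc}, \Cref{lem:ex-decr-visc}) that $\trace(D^2V\,\sigma\sigma^\top) \geq -2f$ for all $\sigma\in U$, with equality at the candidate optimal $\sigma$. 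The paper itself flags in \Cref{sec:switching-problem} that the $\lambda$-reduction is only heuristic; your proof must pass through the $d$-dimensional Hessian computation.

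Second, your claim that $V$ is ``a classical solution on each piece'' fails on the tangential pieces. There $\tilde V' = -2r\tilde f(r)$, so $\tilde V''$ requires $\tilde f'$, but under \Cref{ass:rad-symm} only the one-sided derivative $\tilde f'_+$ is assumed to exist: $\tilde f$ (and hence $V$) need not be $C^2$ on $(r_i,s_i)$. The paper circumvents this by proving the classical-solution property under an extra $C^1$ assumption on $\tilde f$ (\Cref{lem:ex-decr-smooth}), then approximating $\tilde f$ uniformly by polynomials and invoking the stability of viscosity solutions to pass to the limit (\Cref{lem:ex-decr-visc}). Without that approximation step, the argument does not go through.

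Third, you do not verify the viscosity property at the origin, which is a separate and nontrivial step. The candidate Hessian involves factors like $|x|^{-1}$ and $|x|^{-3}$, and one must show that $D^2V(x) \to -2\tilde f(0)I$ as $x\to 0$ in both Case I and Case II. Case II is precisely where \Cref{ass:rad-symm}(7), that $\tilde f$ is $C^1$ near the origin with $r\tilde f'(r)\to 0$, is needed; this assumption plays no role anywhere else in your sketch, which is a sign the origin step is missing. Your remark that ``Case II is restricted to $x\neq 0$'' only concerns the \emph{existence of an optimal control}, not the viscosity property of $V$: the identification $v=V$ is claimed on all of $D$, including the origin, so the viscosity inequalities must be checked there too.

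Your Itô-based optimality verification is essentially the paper's Step 5, provided you track the squared-radius process carefully and use the Itô--Tanaka formula, noting that the $C^1$ regularity of $V$ kills the local-time contributions; retreating to the weak formulation to avoid constructing $X^{\sigma^\star}$ as a strong solution is also what the paper does.
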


In order to prove this result, we refer to the theory of viscosity solutions for HJB equations that we summarise in \Cref{app:dpp-comparison}. The main result that we require is \Cref{thm:unique-viscosity}, which gives a viscosity solution characterisation of the value function.

In this section, we will prove that the candidate function $V$ is a viscosity solution of the HJB equation
\begin{equation}\label{eq:hjb-2}
	- \frac{1}{2}\inf_{\sigma \in U}\trace\left(D^2V(x)\sigma\sigma^\top\right) = f(x), \quad x \in D,
\end{equation}
with boundary condition $V = 0$ on $\partial D$. We then appeal to Theorem \ref{thm:unique-viscosity} to see that the value function $v$ is a viscosity solution of the same boundary value problem and, moreover, such a solution is unique. From this, we conclude that the function $V$ is equal to the value function $v$.

We first show that $V$ is a classical solution of \eqref{eq:hjb-2} in the regions where we expect radial motion to be optimal.

\begin{lemma}\label{lem:ex-incr-visc}
	For each $i \geq 1$, define $u_i: (s_{i-1}, r_i \wedge R] \to \RR$ by
	\begin{equation}
 		u_i(r) = 2 \int_{r}^{r_i}\int_{s_{i-1}}^s \tilde{f}(t) \D t \D s + 2(r_i - r)s_{i - 1}\tilde{f}(s_{i - 1}) + C^u_i, \quad r \in (s_{i-1}, r_i \wedge R],
 	\end{equation}
 	for an arbitrary constant $C^u_i$, and define the set
 	\begin{equation}
 		D_i := \{x \in D \colon \abs{x} \in (s_{i-1}, r_i \wedge R)\}.
 	\end{equation}
 	Then $U_i: D_i \to \RR$, defined by $U_i(x) = u_i(\abs{x})$, is a classical solution of the PDE \eqref{eq:hjb-2} in the region $D_i$.
\end{lemma}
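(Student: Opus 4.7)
The plan is to exploit the radial symmetry of $U_i$ to reduce the infimum in \eqref{eq:hjb-2} to a one-variable problem, and then to verify that the ODE $u_i''(r) = -2\tilde{f}(r)$ delivers the correct value of the infimum, using the definition of the switching point $r_i$.

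First I would compute the gradient and Hessian of $U_i$ explicitly. Since $U_i(x) = u_i(|x|)$, the chain rule gives
\begin{equation*}
  D U_i(x) = \frac{u_i'(|x|)}{|x|}\, x, \qquad D^2 U_i(x) = \left(u_i''(|x|) - \frac{u_i'(|x|)}{|x|}\right)\frac{xx^\top}{|x|^2} + \frac{u_i'(|x|)}{|x|}\, I,
\end{equation*}
for $x \in D_i$ (so $|x| > 0$). Taking any $\sigma \in U$ and using $\trace(\sigma\sigma^\top) = 1$ together with $\trace(xx^\top \sigma\sigma^\top) = |\sigma^\top x|^2$, I obtain
\begin{equation*}
  \trace\bigl(D^2 U_i(x)\, \sigma\sigma^\top\bigr) = \frac{u_i'(|x|)}{|x|} + \left(u_i''(|x|) - \frac{u_i'(|x|)}{|x|}\right)\frac{|\sigma^\top x|^2}{|x|^2}.
\end{equation*}

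Next I would minimise over $\sigma \in U$. By Cauchy--Schwarz $|\sigma^\top x|^2 \leq |x|^2 \trace(\sigma\sigma^\top) = |x|^2$, so $|\sigma^\top x|^2 / |x|^2 \in [0,1]$, and the value $1$ is attained by $\sigma = [x/|x|;\, 0;\, \cdots;\, 0]$, i.e.\ by radial motion. Hence, provided the coefficient $u_i''(|x|) - u_i'(|x|)/|x|$ is non-positive, the infimum equals $u_i''(|x|)$. The main (and essentially only) step to verify is this sign condition. Differentiating the formula for $u_i$ in the statement yields $u_i'(r) = -2 s_{i-1}\tilde{f}(s_{i-1}) - 2\int_{s_{i-1}}^r \tilde{f}(t)\D t$ and $u_i''(r) = -2 \tilde{f}(r)$, so
\begin{equation*}
  u_i''(r) - \frac{u_i'(r)}{r} = \frac{2}{r}\left[s_{i-1}\tilde{f}(s_{i-1}) + \int_{s_{i-1}}^r \tilde{f}(t)\D t - r\tilde{f}(r)\right].
\end{equation*}
By the definition of $r_i$ in \eqref{eq:caseI-switching} (or by $s_0 = 0$ in Case I when $i = 1$, where the bracketed expression reduces to $\int_0^r \tilde{f}(t)\D t - r\tilde{f}(r) \leq 0$ since $\tilde{f}$ is increasing on $(0,\eta) \supseteq (0, r_1)$), the bracketed quantity is non-positive on $(s_{i-1}, r_i)$, so the coefficient is non-positive on $D_i$, as required.

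Combining these steps, for $x \in D_i$,
\begin{equation*}
  -\tfrac{1}{2}\inf_{\sigma \in U}\trace\bigl(D^2 U_i(x)\sigma\sigma^\top\bigr) = -\tfrac{1}{2} u_i''(|x|) = \tilde{f}(|x|) = f(x),
\end{equation*}
so $U_i$ is a classical solution of \eqref{eq:hjb-2} on $D_i$. The main (modest) obstacle is identifying the sign of $u_i'' - u_i'/r$, which is precisely why $r_i$ was defined the way it was; once this is observed, the calculation is routine.
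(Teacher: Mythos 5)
Your proof is correct and follows essentially the same route as the paper: compute the radial Hessian, reduce the trace to a one-parameter quantity (your $|\sigma^\top x|^2/|x|^2 \in [0,1]$ is the same thing as the paper's observation that $|x|^2 I - xx^\top$ is positive semi-definite with $\trace$ against $\sigma\sigma^\top$ lying in $[0, |x|^2]$), and use the definition of $r_i$ to obtain the sign condition $u_i'(r) \geq -2r\tilde{f}(r)$, concluding that radial motion attains the infimum. The only small blemish is the parenthetical claim $(0,\eta) \supseteq (0, r_1)$, which need not hold; it is harmless because, as you state first, the sign condition follows directly from the definition of $r_1$ (the formula with $s_0 = 0$), without appealing to monotonicity of $\tilde{f}$.
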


\begin{proof}
	Fix $i \geq 1$ and let $x \in D_i$. Observe that, by definition of $r_{i}$, 
	\begin{equation}\label{eq:ri-ineq}
		u_i^\prime (\abs{x}) \geq - 2 \abs{x}\tilde{f}(\abs{x}).
	\end{equation}
	We have that $U_i$ is twice continuously differentiable at $x$ and
	\begin{equation}
		D^2U_i(x) = \abs{x}^{-3}\left[\abs{x} u_i^{\prime\prime}(\abs{x}) - u_i^\prime(\abs{x})\right]xx^\top + \abs{x}^{-1}u_i^\prime(\abs{x})I.
	\end{equation}
	Substituting in $u_i^{\prime\prime}(\abs{x}) = -2\tilde{f}(\abs{x})$ and rearranging gives
	\begin{equation}
	\begin{split}
		D^2U_i(x) & = - \abs{x}^{-3}\left[2 \abs{x}\tilde{f}(\abs{x}) + u_i^\prime(\abs{x})\right]xx^\top + \abs{x}^{-1}u_i^\prime(\abs{x})I\\
		& = - 2\tilde{f}(\abs{x})I + \abs{x}^{-3}\left[2\abs{x}\tilde{f}(\abs{x}) + u_i^\prime(\abs{x})\right]\left[\abs{x}^2I - xx^\top\right].
	\end{split}
	\end{equation}
	Hence, for any $\sigma \in U$,
	\begin{equation}
	\begin{split}
		\trace\left(D^2U_i(x) \sigma\sigma^\top\right) & = - 2 \tilde{f}(\abs{x})\trace(\sigma\sigma^\top) + \abs{x}^{-3}\left[2\abs{x}\tilde{f}(\abs{x}) + u_i^\prime(\abs{x})\right]\trace\left(\left[\abs{x}^2 I - xx^\top\right] \sigma\sigma^\top\right).
	\end{split}
	\end{equation}
	Noting that $\abs{x}^2I - xx^\top$ is positive semi-definite, and using \eqref{eq:ri-ineq}, we have
	\begin{equation}
		\trace\left(D^2U_i(x)\sigma\sigma^\top\right) \geq -2 \tilde{f}(\abs{x})\trace(\sigma\sigma^\top) = - 2 f(x),
	\end{equation}
	for any $\sigma \in U$.
	
	Taking $\sigma = \sigma^1(x)$, where $\sigma^1:D \to \RR$ is the function defined in \eqref{eq:radial-function}, we see that
	\begin{equation}
		\trace\left(\left[\abs{x}^2 I - xx^\top\right] \sigma^1(x){\sigma^1(x)}^\top\right) = 0,
	\end{equation}
	and so
	\begin{equation}
		\trace\left(D^2U_i(x)\sigma^1(x){\sigma^1(x)}^\top\right) = - 2 f(x).
	\end{equation}
	Hence $U_i$ is a classical solution of the PDE \eqref{eq:hjb-2} in the the region $D_i$.
\end{proof}

We next show that $V$ is a viscosity solution of \eqref{eq:hjb-2} in the regions where we expect tangential motion to be optimal.

\begin{lemma}\label{lem:ex-decr-visc}
	For each $i \geq 0$, define $w_i: (r_i, s_i \wedge R] \to \RR$ by
	\begin{equation}
 		w_i(r) = 2 \int_{r}^{s_i} s \tilde{f}(s) \D s + C^w_i, \quad r \in (r_i, s_i \wedge R],
 	\end{equation}
 	for an arbitrary constant $C^w_i$, and define the set
 	\begin{equation}
		\overline{D}_i := \{x \in D \colon \abs{x} \in (r_i, s_i \wedge R)\}.
	\end{equation}
	Then $W_i: \overline{D}_i \to \RR$, defined by $W_i(x) = w_i(\abs{x})$, is a viscosity solution of the PDE \eqref{eq:hjb-2} in the region $\overline{D}_i$.
\end{lemma}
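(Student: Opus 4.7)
Note that $w_i$ satisfies $w_i'(r) = -2r\tilde{f}(r)$, which is continuous under \Cref{ass:rad-symm}, so $W_i \in C^1(\overline{D}_i)$ with $DW_i(x) = -2x\tilde{f}(|x|)$. Unlike the function $U_i$ of \Cref{lem:ex-incr-visc}, however, $W_i$ will not in general be $C^2$, since \Cref{ass:rad-symm} does not guarantee that $\tilde{f}$ is differentiable on $(r_i, s_i)$. Hence a direct classical verification is unavailable, and the plan is to establish the viscosity property through a mollification argument combined with the standard stability theorem for viscosity solutions.

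By construction in \eqref{eq:caseI-switching} and \eqref{eq:caseII-switching}, $\tilde{f}$ is non-increasing on $(r_i, s_i)$. The first step is to choose a family $(\tilde{f}^\epsilon)_{\epsilon > 0} \subset C^1$ of non-increasing functions that converge locally uniformly to $\tilde{f}$ on $(r_i, s_i \wedge R)$; one obtains such a family by extending $\tilde{f}$ monotonically to a slightly larger interval and convolving with a non-negative mollifier. Setting $w_i^\epsilon(r) := 2\int_r^{s_i} s \tilde{f}^\epsilon(s) \D s + C_i^w$ and $W_i^\epsilon(x) := w_i^\epsilon(|x|)$ then produces $W_i^\epsilon \in C^2(\overline{D}_i)$ with $W_i^\epsilon \to W_i$ locally uniformly as $\epsilon \downarrow 0$.

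The next step is to verify that each $W_i^\epsilon$ is a classical solution of the perturbed HJB equation
\[-\frac{1}{2}\inf_{\sigma \in U}\trace\bigl(D^2 u(x)\sigma\sigma^\top\bigr) = \tilde{f}^\epsilon(|x|), \qquad x \in \overline{D}_i.\]
A direct computation of the Hessian yields $D^2 W_i^\epsilon(x) = -2\tilde{f}^\epsilon(|x|) I - 2|x|^{-1}(\tilde{f}^\epsilon)'(|x|) xx^\top$, and hence
\[\trace\bigl(D^2 W_i^\epsilon(x)\sigma\sigma^\top\bigr) = -2\tilde{f}^\epsilon(|x|) - 2|x|(\tilde{f}^\epsilon)'(|x|) \frac{|\sigma^\top x|^2}{|x|^2}\]
for any $\sigma \in U$. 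Since $\tilde{f}^\epsilon$ is non-increasing, $(\tilde{f}^\epsilon)'(|x|) \leq 0$, so the quantity $|\sigma^\top x|^2/|x|^2 \in [0,1]$ should be driven to zero in order to realise the infimum. This is precisely what the tangential control $\sigma^0(x)$ achieves (since $\sigma^0(x)^\top x = 0$), giving $\inf_{\sigma \in U}\trace(D^2 W_i^\epsilon(x)\sigma\sigma^\top) = -2\tilde{f}^\epsilon(|x|)$ and showing that $W_i^\epsilon$ solves the perturbed PDE classically, and thus in the viscosity sense.

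It then remains to pass to the limit. Writing $F^\epsilon(x, A) := -\frac{1}{2}\inf_{\sigma \in U}\trace(A\sigma\sigma^\top) - \tilde{f}^\epsilon(|x|)$, the operators $F^\epsilon$ are degenerate elliptic and converge locally uniformly to their limit $F$ as $\epsilon \downarrow 0$. Combined with $W_i^\epsilon \to W_i$ locally uniformly, the standard stability theorem for viscosity solutions then implies that $W_i$ is a viscosity solution of \eqref{eq:hjb-2} on $\overline{D}_i$, as required. The main subtlety is the choice of approximation: monotonicity of $\tilde{f}^\epsilon$ must be preserved so that the infimum of the trace is always attained in the tangential direction, thereby reducing the calculation at the approximate level to the same classical computation as in the proof of \Cref{lem:ex-incr-visc}.
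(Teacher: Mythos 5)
Your proof follows the same overall strategy as the paper — approximate $\tilde{f}$ on $(r_i, s_i\wedge R)$ by $C^1$ functions, verify that the corresponding smooth candidates are classical (hence viscosity) solutions of the perturbed PDE, and conclude by the stability theorem — so it is not a genuinely different route. Where you differ from the paper, and in a way worth flagging, is the choice of approximating family. The paper invokes Stone--Weierstrass to obtain polynomials $\tilde{f}^k$ converging uniformly to $\tilde{f}$ and then applies \Cref{lem:ex-decr-smooth} to each $W_i^k$. But the classical verification in \Cref{lem:ex-decr-smooth} hinges on the inequality $w_i''\geq -2\tilde{f}$, which holds precisely because $\tilde{f}'_+\leq 0$ on $(r_i, s_i)$; for a generic uniform polynomial approximation this sign condition can fail, in which case the infimum over $\sigma\in U$ is no longer realised by the tangential direction and $W_i^k$ is not a classical solution of $F^k = 0$. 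Your mollification construction, which explicitly preserves the non-increasing property of $\tilde{f}^\epsilon$, closes this gap: you isolate exactly the feature (monotonicity) that makes the trace infimum land on the tangential control, and you re-derive the classical verification inline rather than re-applying \Cref{lem:ex-decr-smooth} to a function for which the switching points $s_i$ would formally change. Aside from this, your Hessian computation and application of the stability result match the paper's, and the argument is correct.
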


Note that $w_i$ is twice continuously differentiable if and only if $\tilde{f}$ is continuously differentiable. We first suppose that this is the case and prove the following lemma.

\begin{lemma}\label{lem:ex-decr-smooth}
	Fix $i \geq 0$ and suppose that $\tilde{f}$ is continuously differentiable in the interval $(r_i, s_i \wedge R)$. Then $W_i$ defined in \Cref{lem:ex-decr-visc} is a classical solution of the PDE \eqref{eq:hjb-2} in the region $\overline{D}_i$.
\end{lemma}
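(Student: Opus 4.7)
The plan is a direct verification: I will compute the Hessian of $W_i$ in closed form, take the trace against $\sigma\sigma^\top$, minimise over $U$, and check that the resulting quantity equals $-2f(x)$. There are no viscosity subtleties here by assumption, so classical calculus suffices.

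First, since $\tilde f$ is $C^1$ on $(r_i,s_i\wedge R)$, the ODE $w_i'(r)=-2r\tilde f(r)$ gives that $w_i$ is $C^2$ with
\begin{equation*}
w_i'(r)=-2r\tilde f(r),\qquad w_i''(r)=-2\tilde f(r)-2r\tilde f'(r).
\end{equation*}
Writing $W_i(x)=w_i(|x|)$ and differentiating (using $\nabla |x|=x/|x|$), the usual chain-rule computation for a radial function yields
\begin{equation*}
D^2W_i(x)=\frac{w_i'(|x|)}{|x|}\,I+\frac{1}{|x|^3}\bigl[|x|w_i''(|x|)-w_i'(|x|)\bigr]xx^\top.
\end{equation*}
Substituting the formulas for $w_i'$ and $w_i''$, the coefficient of $I$ is $-2\tilde f(|x|)$ and the coefficient of $xx^\top$ simplifies (the $\tilde f$ terms cancel) to $-2\tilde f'(|x|)/|x|$, giving
\begin{equation*}
D^2W_i(x)=-2\tilde f(|x|)\,I-\frac{2\tilde f'(|x|)}{|x|}\,xx^\top.
\end{equation*}

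Next, for any $\sigma\in U$, using $\trace(\sigma\sigma^\top)=1$ and $\trace(xx^\top\sigma\sigma^\top)=|x^\top\sigma|^2\ge 0$,
\begin{equation*}
\trace\bigl(D^2W_i(x)\sigma\sigma^\top\bigr)=-2\tilde f(|x|)-\frac{2\tilde f'(|x|)}{|x|}\,|x^\top\sigma|^2.
\end{equation*}
By the definition $s_i=\inf\{r>r_i:\tilde f'_+(r)>0\}$ in \eqref{eq:caseI-switching}/\eqref{eq:caseII-switching}, together with the $C^1$ assumption, we have $\tilde f'(r)\le 0$ for $r\in(r_i,s_i\wedge R)$, so the second term is non-negative. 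Hence the infimum over $U$ is attained precisely when $x^\top\sigma=0$, e.g.\ by $\sigma=\sigma^0(x)$ from \Cref{def:tangential}, and equals $-2\tilde f(|x|)$. Therefore
\begin{equation*}
-\tfrac12\inf_{\sigma\in U}\trace\bigl(D^2W_i(x)\sigma\sigma^\top\bigr)=\tilde f(|x|)=f(x),
\end{equation*}
so $W_i$ is a classical solution of \eqref{eq:hjb-2} on $\overline D_i$.

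There is no real obstacle here: the only point requiring attention is the sign of $\tilde f'$ on $(r_i,s_i\wedge R)$, which follows immediately from the definition of $s_i$ in the construction of the switching points (and the $C^1$ hypothesis lets us pass from the one-sided derivative $\tilde f'_+$ to $\tilde f'$). The rest is bookkeeping with the radial Hessian formula.
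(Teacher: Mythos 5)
Your proof is correct and follows essentially the same route as the paper's: compute the radial Hessian of $W_i$, observe that the coefficient of $xx^\top$ is non-negative on $(r_i, s_i \wedge R)$ because $s_i$ is defined as the first point after $r_i$ where $\tilde f'_+$ becomes positive, and then note the infimum over $U$ is attained at the tangential matrix with $x^\top\sigma = 0$. The only cosmetic difference is that you expand $w_i''$ explicitly in terms of $\tilde f'$, whereas the paper phrases the sign condition as the inequality $w_i''(|x|) \ge -2\tilde f(|x|)$; the two are equivalent under the $C^1$ hypothesis.
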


\begin{proof}
	Let $x \in \overline{D}_i$ and observe that, by definition of $s_i$,
	\begin{equation}\label{eq:si-ineq}
		w_{i+1}^{\prime\prime}(\abs{x}) \geq -2\tilde{f}(\abs{x}).
	\end{equation}
	Since $\tilde{f}$ is assumed to be continuously differentiable, we have that $w_i$ and $W_i$ are both twice continuously differentiable, and
	\begin{equation}
		D^2W_i(x) = \abs{x}^{-3}\left[\abs{x} w_i^{\prime\prime}(\abs{x}) - w_i^\prime(\abs{x})\right]xx^\top + \abs{x}^{-1}w_i^\prime(\abs{x})I.
	\end{equation}
	Substituting in $w_i^\prime(\abs{x}) = -2 \abs{x}\tilde{f}(\abs{x})$, we have
	\begin{equation}
		D^2W_i(x) = \abs{x}^{-2}\left[w_i^{\prime\prime}(\abs{x}) + 2\tilde{f}(\abs{x})\right]xx^\top - 2\tilde{f}(\abs{x})I.
	\end{equation}
	Hence, for any $\sigma \in U$,
	\begin{equation}
	\begin{split}
		\trace\left(D^2W_i(x)\sigma\sigma^\top\right) & = \abs{x}^{-2}\left[w_i^{\prime\prime}(\abs{x}) + 2\tilde{f}(\abs{x})\right]\trace(xx^\top\sigma\sigma^\top) - 2\tilde{f}(\abs{x})\trace(\sigma\sigma^\top)\\
		& \geq - 2 \tilde{f}(\abs{x})\trace(\sigma\sigma^\top) = - 2 f(x),
	\end{split}
	\end{equation}
	using the inequality \eqref{eq:si-ineq}.
	
	Let $\sigma^\star(x) := \frac{1}{|x|}\begin{bmatrix}
		x^\perp; & 0; & \dots; & 0
\end{bmatrix} \in U$, for some $x^\perp \in \RR^d \setminus \{0\}$ satisfying $x^\top x^\perp = 0$ and $|x| = |x^\perp|$. Then we see that
	\begin{equation}
		\trace\left(xx^\top \sigma^\star(x){\sigma^\star(x)}^\top\right) = 0,
	\end{equation}
	and so
	\begin{equation}
		\trace\left(D^2W_i(x)\sigma^\star(x){\sigma^\star(x)}^\top\right) = - 2 f(x).
	\end{equation}
Hence $W_i$ is a classical solution of the PDE \eqref{eq:hjb-2} in the region $\overline{D}_i$.
\end{proof}

We can now prove \Cref{lem:ex-decr-visc}, by using smooth approximations to the continuous function $\tilde{f}$ and applying a standard stability result for viscosity solutions, which can be found, for example, in Lemma 6.2 of \cite[Chapter II]{fleming_controlled_2006}.

\begin{proof}[Proof of \Cref{lem:ex-decr-visc}]
	Fix $i \geq 1$. Since $\tilde{f}$ is continuous on $[r_i, s_i \wedge R]$, we can approximate $\tilde{f}$ uniformly by polynomials $(\tilde{f}^k)_{k \in \NN}$ (see e.g.\ Theorem 7.26 of \cite{rudin_principles_1976}). Let $k \in \NN$ and define $W_i^k: \overline{D}_i \to \RR$ by
	\begin{equation}
		W_i^k(x) := -2\int_{r_i}^{\abs{x}} \tilde{f}^k(s) s \D s + C^w_i.
	\end{equation}
	Define $f^k: \overline{D}_i \to \RR$ by $f^k(x) = \tilde{f}^k(\abs{x})$, and define $F^k: \overline{D}_i \times \RR^{d,d} \to \RR$ by
	\begin{equation}
		F^k(x, X) = - \frac{1}{2}\inf_{\sigma \in U}\trace(X \sigma \sigma^\top) - f^k(x).
	\end{equation}
	Then, since $\tilde{f}^k$ is continuously differentiable, we can apply Lemma \ref{lem:ex-decr-smooth} to see that $W_i^k$ is a classical solution, and therefore a viscosity solution, of
	\begin{equation}
		F^k(x, D^2W_i^k(x)) = 0 \quad \text{for} \quad x \in \overline{D}_i.
	\end{equation}
	We now show that $F^k$ converges uniformly to $F: \overline{D}_i \times \RR^{d,d} \to \RR$, defined by
	\begin{equation}
		F(x, X) = - \frac{1}{2}\inf_{\sigma \in U}\trace(X \sigma \sigma^\top) - f(x),
	\end{equation}
	and that $W_i^k$ converges uniformly to $W_i$.
	
	Let $\varepsilon > 0$. Then, by uniform convergence of $(\tilde{f}^k)_{k \in \NN}$, there exists $N \in \NN$ such that
	\begin{equation}
		\abs{\tilde{f}(r) - \tilde{f}^k(r)} < \varepsilon, \quad \text{for all} \quad r \in [r_0, R] \quad \text{and} \quad k \geq N.
	\end{equation}
	Let $k \geq N$, $x \in \overline{D}_i$ and $X \in \RR^{d,d}$. Then $\abs{x} \in [r_i, s_i \wedge R]$, and so
	\begin{equation}
	\begin{split}
			\abs{F(x, X) - F^k(x, X)} & = \abs{f(x) - f^k(x)} = \abs{\tilde{f}(\abs{x}) - \tilde{f}^k(\abs{x})} < \varepsilon.
	\end{split}
	\end{equation}
	Therefore $F^k \to F$ uniformly on $\overline{D}_i \times \RR^{d,d}$.
	
	Now choose $M \in \NN$ such that
	\begin{equation}
		\abs{\tilde{f}(r) - \tilde{f}^k(r)} < \frac{\varepsilon}{2 s_i(s_i - r_i)}, \quad \text{for all} \quad r \in [r_i, s_i] \quad \text{and} \quad k \geq M.
	\end{equation}
	Let $k \geq M$ and $x \in \overline{D}_i$. Then $\abs{x} \in [r_i, s_i \wedge R]$, and so
	\begin{equation}
	\begin{split}
		\abs{W_i(x) - W_i^k(x)} = 2 \abs{\int_{r_i}^{\abs{x}}\left(\tilde{f}(s) - \tilde{f}^k(s)\right)s \D s} & \leq 2 \int_{r_i}^{s_i} \abs{\tilde{f}(s) - \tilde{f}^k(s)}\abs{s} \D s\\
		& \leq 2 (s_i - r_i) \frac{\varepsilon}{2 s_i(s_i - r_i)} s_i = \varepsilon.
	\end{split}
	\end{equation}
	Hence $W_i^k \to W_i$ uniformly on $\overline{D}_i$.
	
	We can now apply the stability result given in Lemma 6.2 of \cite[Chapter II]{fleming_controlled_2006}, to conclude that $W_i$ is a viscosity solution of
	\begin{equation}
		F(x, D^2W_i(x)) = 0 \quad \text{for} \quad x \in \overline{D}_i;
	\end{equation}
	i.e.\ $W_i$ is a viscosity solution of the PDE \eqref{eq:hjb-2} in the region $\overline{D}_i$.
\end{proof}

We now combine the above lemmas to prove that $V$ is the value function.

\begin{proof}[Proof of Proposition \ref{prop:radial-symmetric-value}]
	We divide the domain $D$ into disjoint regions and prove first that $V$ is a viscosity solution of \eqref{eq:hjb-2} in the interior of each region.\\
	
	\paragraph{\emph{Step 1:}}
	Fix $i \geq 1$ such that $s_{i - 1} \leq R$, if such a point exists. In the region $D_i = \{x \in D \colon \abs{x} \in (s_{i - 1}, r_i \wedge R)\}$, we have $V = U_i$, for a particular choice of constant $C^u_i$. So by Lemma \ref{lem:ex-incr-visc}, $V$ is a viscosity solution of \eqref{eq:hjb-2} in this region.
	
	Now fix $i \geq 0$ such that $r_i \leq R$, if such a point exists. In the region $\overline{D}_i = \{x \in D \colon \abs{x} \in (r_i, s_i \wedge R)\}$, we have $V = W_i$ for a particular choice of constant $C^w_i$, and so $V$ is a viscosity solution of \eqref{eq:hjb-2} in this region, by Lemma \ref{lem:ex-decr-visc}.\\
	
	\paragraph{\emph{Step 2:}} We next prove that $V$ is a viscosity solution of \eqref{eq:hjb-2} on each of the internal boundaries between the regions.
	
	Let $i \geq 0$ be such that $r_i < R$, if such a point exists. Consider $x_i \in D$ such that $\abs{x_i} = r_i$. Note that
	\begin{equation}\label{eq:ri-hessian}
	\begin{split}
		\lim_{\abs{x} \to r_i -}D^2V(x) & = \lim_{\abs{x} \to r_i -}D^2U_i(x)\\
		& = -\lim_{\abs{x} \to r_i -} \left[2\tilde{f}(\abs{x})I + \abs{x}^{-3}\left(2\abs{x}\tilde{f}(\abs{x}) + u_i^\prime(\abs{x})\right)\left[\abs{x}^2 I - x x^\top\right]\right] = - 2\tilde{f}(r_i)I,
	\end{split}
	\end{equation}
	since $2r_i\tilde{f}(r_i) + {u_i}_-^\prime(r_i) = 0$, by definition of $r_i$ and continuity of $\tilde{f}$.	
	
	 To show that $V$ is a viscosity subsolution at $x_i$, let $x_i \in \arg\min(\phi - V)$, for some $\phi \in C^\infty(D)$. Since $V \in C^1(D)$, it must be the case that $D\phi(x_i) = DV(x_i)$, and that the Hessian of $\phi$ satisfies
	\begin{equation}
		D^2\phi(x_i) \geq \lim_{\abs{x} \to r_i -}D^2V(x) = - 2\tilde{f}(r_i)I,
	\end{equation}
	as calculated in \eqref{eq:ri-hessian}. Hence, for any $\sigma \in U$,
	\begin{equation}
		\trace\left(D^2\phi(x_i)\sigma\sigma^\top\right) \geq - 2 \tilde{f}(r_i) \trace(\sigma\sigma^\top) = - 2 f(x_i),
	\end{equation}
	and so
	\begin{equation}
		- \frac{1}{2}\inf_{\sigma \in U}\trace\left(D^2\phi(x_i)\sigma\sigma^\top\right) \leq f(x_i),
	\end{equation}
	as required.
	
	To show the supersolution property, let $x_i \in \arg\max( \psi - V)$, for some $\psi \in C^\infty(D)$. Then by a similar argument to the one above, we have
	\begin{equation}
		D^2\psi(x_i) \leq - 2 \tilde{f}(r_i) I,
	\end{equation}
	and so
	\begin{equation}
		\trace\left(D^2\psi(x_i)\sigma\sigma^\top\right) \leq - 2f(x_i),
	\end{equation}
	for any $\sigma \in U$, which implies that
	\begin{equation}
		-\frac{1}{2}\inf_{\sigma \in U}\trace\left(D^2\psi(x_i)\sigma\sigma^\top\right)\geq f(x_i).
	\end{equation}
	
	Now let $i \geq 0$ be such that $s_i < R$, if such a point exists, and consider $x_i \in D$ such that $\abs{x_i} = s_i$. Here, note that
	\begin{equation}\label{eq:si-hessian}
	\begin{split}
		\lim_{\abs{x} \to s_i+}D^2V(x) & = \lim_{\abs{x} \to s_i+}D^2U_{i +1}(x)\\
		& = -\lim_{\abs{x} \to s_i+} \left[2\tilde{f}(\abs{x})I + \abs{x}^{-3}\left(2\abs{x}\tilde{f}(\abs{x}) + u_{i+1}^\prime(\abs{x})\right)\left[\abs{x}^2 I - x x^\top\right]\right] = - 2\tilde{f}(s_i)I,
	\end{split}
	\end{equation}
	using the fact that $2s_i \tilde{f}(s_i) + {u_{i+1}}_+^\prime(s_i) = 0$, by definition of $s_i$ and the smooth fit property.
	
	To show that $V$ is a viscosity solution at points of radius $s_i$, we follow the same reasoning as we did for points of radius $r_i$. For $x_i \in \arg \min (\phi - V)$ and $\phi \in C^\infty(D)$, we have that
	\begin{equation}
		D^2\phi(x_i) \geq \lim_{\abs{x} \to s_i+} D^2V(x) = - 2\tilde{f}(s_i)I,
	\end{equation}
	using \eqref{eq:si-hessian}. So, for any $\sigma \in U$,
	\begin{equation}
		\trace\left(D^2\phi(x_i)\sigma\sigma^\top\right) \geq - 2f(x_i),
	\end{equation}
	which implies that the subsolution property holds.
	
	Similarly, for $x_i \in \arg \max(\psi - V)$ and $\psi \in C^\infty(D)$, we have
	\begin{equation}
		D^2\psi(x_i) \leq -2\tilde{f}(s_i)I,
	\end{equation}
	and so, for any $\sigma \in U$,
	\begin{equation}
		\trace\left(D^2\psi(x_i)\sigma\sigma^\top\right) \leq -2f(x_i),
	\end{equation}
	which implies the supersolution property.\\
	
	\paragraph{\emph{Step 3:}} We have shown that $V$ is a viscosity solution of \eqref{eq:hjb-2} in $D\setminus \{0\}$. We now consider the behaviour at the origin. Recall from \Cref{ass:rad-symm} that we have assumed that $\tilde{f}$ is monotone on some interval $(0, \eta)$.\\
	
	\subparagraph{\emph{Case I:}} Suppose that $\tilde{f}$ is strictly increasing on $(0, \eta)$. Then $V = U_1$ in some neighbourhood of the origin. We see that $r_1 \geq \eta$, and so $V = U_1$ in $B_\eta(0)(0)$. Let $x \in B_\eta(0)$ and consider
	\begin{equation}
		D^2V(x) = - 2\tilde{f}(\abs{x})I + \abs{x}^{-3}\left(2\abs{x}\tilde{f}(\abs{x}) + u_1^\prime(\abs{x})\right)\left[\abs{x}^2 I - x x^\top\right].
	\end{equation}
	Since $\abs{x} < r_1$, we have
	\begin{equation}\label{eq:D2-bound-1}
		2\abs{x}\tilde{f}(\abs{x}) + u_1^\prime(\abs{x}) > 0.
	\end{equation}
	Substituting in the value of $u_1^\prime$ and considering a first order Taylor expansion around $0$, we find that there exists $C > 0$ such that
	\begin{equation}\label{eq:D2-bound-2}
	\begin{split}
		2 \abs{x} \tilde{f}(\abs{x}) + u_1^\prime(\abs{x}) = -2\int_0^{\abs{x}}\tilde{f}(s)\D s & = 2\abs{x}\left(\tilde{f}(\abs{x}) - \tilde{f}(0)\right) + o(\abs{x})\\
		& \leq 2\abs{x}\left(\tilde{f}(\abs{x}) - \tilde{f}(0)\right) + C\abs{x}^2.
	\end{split}
	\end{equation}
	Hence, for $j, k \in \{1, \dotso, d\}$, 
	\begin{equation}
	\begin{split}
		0 \leq \abs{x}^{-3}\left(2\abs{x}\tilde{f}(\abs{x}) + u_i^\prime(\abs{x})\right)\abs{\left[\abs{x}^2 I - x x^\top\right]_{jk}} & \leq \abs{x}^{-1}\left(2\abs{x}\tilde{f}(\abs{x}) + u_i^\prime(\abs{x})\right)\\
		& \leq 2 \left(\tilde{f}(\abs{x}) - \tilde{f}(0)\right) C\abs{x}.
	\end{split}
	\end{equation}
	Taking the limit as $\abs{x} \to 0+$, by continuity of $\tilde{f}$, we have
	\begin{equation}
		\lim_{x \to 0} D^2V(x) = - 2 \tilde{f}(0)I.
	\end{equation}
	It is then easy to see that $V$ is a viscosity solution of \eqref{eq:hjb-2} at the origin.\\
	
	\subparagraph{\emph{Case II:}} On the other hand, if $\tilde{f}$ is decreasing in $(0, \eta)$, we have that $V = W_1$ in $B_{\eta}(0)$. Recall from \Cref{ass:rad-symm} that $\tilde{f}$ is continuously differentiable on some interval $(0, \delta)$, and consider $x \in D$ such that $\abs{x} < \delta \wedge \eta$. Then
	\begin{equation}
	\begin{split}
		D^2 V(x) & = \abs{x}^{-2}\left[w_1^{\prime\prime}(\abs{x}) + 2\tilde{f}(\abs{x})\right]xx^\top - 2\tilde{f}(\abs{x})I\\
		& = 2\abs{x}^{-2}\left[-\abs{x}\tilde{f}^\prime(\abs{x}) - \tilde{f}(\abs{x}) + \tilde{f}(\abs{x})\right]xx^\top - 2\tilde{f}(\abs{x})I\\
		& = -2 \abs{x}^{-1}\tilde{f}^\prime(\abs{x})xx^\top - 2 \tilde{f}(\abs{x})I.
	\end{split}
	\end{equation}
	
	Since $\tilde{f}^\prime(\abs{x}) \leq 0$, we get the following bound. For $j, k \in\{1, \dotso, d\}$,
	\begin{equation}
		0 \leq - 2 \abs{x}^{-1} \tilde{f}^\prime(\abs{x})\abs{x_jx_k} \leq -2\abs{x}\tilde{f}^\prime(\abs{x}) \to 0, \quad \text{as} \quad \abs{x}\to 0+,
	\end{equation}
	where the limit is given by the fifth statement of \Cref{ass:rad-symm}.
	
	Therefore $\lim_{x\to0}D^2V(x) = -2 \tilde{f}(0)I$, as in Case I, and so $V$ is a viscosity solution of \eqref{eq:hjb-2} at the origin.\\
	
	\paragraph{\emph{Step 4:}} By construction of the function $V$, the boundary condition $V = 0$ on $\partial D$ is satisfied. We conclude, by Theorem \ref{thm:unique-viscosity}, that the function $V$ is equal to the value function $v$. Also, by the construction of $V$, we have that the value function $v$ is continuously differentiable in $D$.\\
	
	\paragraph{\emph{Step 5:}} Finally, we turn to the proof that the control $\sigma^\star$ is optimal for the weak value function. It is sufficient to show that
	\begin{equation}
		t \mapsto V(X^{\sigma^\star}_t) + \int_0^t f(X^{\sigma^\star}_s) \D s
	\end{equation}
	is a martingale. We will work with the squared radius of the process $X^{\sigma^\star}$, writing $Z^{\sigma^\star}_t = \abs{X^{\sigma^\star}_t}^2$, for $t \geq 0$. We also let $\overline{V}: [0, R^2) \to \RR$ be such that $V(x) = \overline{V}(\abs{x}^2)$ for all $x \in D$.
	
	Suppose that $\tilde{f}$ is increasing on the interval $(0, \eta)$. Then $\sigma^\star$ is the weak control defined via \eqref{eq:conj-optimal-control-incr}. Letting $W$ be the first component of the Brownian motion $B$, \Cref{lem:squared-radius} tells us that $Z^{\sigma^\star}$ satisfies the SDE
	\begin{equation}
		\D Z^{\sigma^{\star}}_t = \D t + 2 \left(\sum_i \ind{Z^{\sigma^\star}_t \in (s_i^2, r_{i+1}^2 \wedge R^2)} + \ind{Z^{\sigma^\star}_t \in [0, r_1^2 \wedge R^2)}\right)\sqrt{Z^{\sigma^\star}_t}\D W_t,
	\end{equation}
	where the index $i$ runs from $1$ to $\inf\{k \in \NN : \; r_{k + 1} \ge R\}$.
	
	In each interval $[r_i^2, s_i^2]$, there is a constant $C$ such that
	\begin{equation}
		\overline{V}(z) = 2 \int_{\sqrt{z}}^{s_i} s \tilde{f}(s) \D s + C.
	\end{equation}
	Therefore, since $\D Z^{\sigma^\star}_t = \D t$ when $Z^{\sigma^\star}_t \in [r_i^2, s_i^2]$, we can make a change of variables to find that
	\begin{equation}\label{eq:optimal-control-tangential}
		\ind{Z^{\sigma^{\star}}_t \in [r_i^2, s_i^2]}\D \overline{V}(Z^{\sigma^\star}_t) = - \ind{Z^{\sigma^{\star}}_t \in [r_i^2, s_i^2]} \tilde{f}\big(\sqrt{Z^{\sigma^\star}_t}\big) \D t.
	\end{equation}
	
	Now, in each interval $(s_i^2, r_{i + 1}^2)$, there is a constant $C$ such that
	\begin{equation}
		\overline{V}(z) = 2 \int_{\sqrt{z}}^{r_{i + 1}}\int_{s_i}^s \tilde{f}(t) \D t \D s + 2 (r_{i + 1} - \sqrt{z}) s_i \tilde{f}(s_i) + C.
	\end{equation}
	We see that $V$ is twice continuously differentiable in such an interval, and so we can apply It\^o's formula to $\overline{V}(Z^{\sigma^\star})$. We calculate the derivatives
	\begin{equation}
		\overline{V}^\prime(z) = - z^{- \frac{1}{2}} \int_{s_i}^{\sqrt{z}} \tilde{f}(s) \D s - z^{- \frac{1}{2}} s_i \tilde{f}(s_i),
	\end{equation}
	and
	\begin{equation}
		\overline{V}^{\prime \prime}(z) = \frac{1}{2} z^{- \frac{3}{2}} \int_{s_i}^{\sqrt{z}} \tilde{f}(s) \D s - \frac{1}{2}Z^{-1} \tilde{f}(\sqrt{z}) + \frac{1}{2} z^{-\frac{3}{2}} s_i \tilde{f}(s_i).
	\end{equation}
	Then, by It\^o's formula, we find that
	\begin{equation}
	\begin{split}
		\ind{Z^{\sigma^\star}_t \in (s_i^2, r_{i + 1}^2)} \D \overline{V}(Z^{\sigma^\star}_t) & = - \ind{Z^{\sigma^\star}_t \in (s_i^2, r_{i + 1}^2)} \tilde{f}\big(\sqrt{Z^{\sigma^\star}_t}\big) \D t + 2 \ind{Z^{\sigma^\star}_t \in (s_i^2, r_{i + 1}^2)} \overline{V}^\prime(Z^{\sigma^\star}_t)\sqrt{Z^{\sigma^\star}_t} \D W_t.
	\end{split}
	\end{equation}
	We have a similar expression for the interval $[0, r_1^2)$, and so combining this with \eqref{eq:optimal-control-tangential}, we have
	\begin{equation}
	\begin{split}
		V(X^{\sigma^\star}_t) - V(X^{\sigma^\star}_0) & = - \int_0^t f(X^{\sigma^\star}_s) \D s + 2\int_0^t \left(\sum_i \ind{Z^{\sigma^\star}_s \in (s_i^2, r_{i + 1}^2)} + \ind{Z^{\sigma^\star}_s \in [0, r_1^2)} \right)\sqrt{Z^{\sigma^\star}_s}\D W_s,
	\end{split}
	\end{equation}
	for any $t \geq 0$. This shows that the required martingale property holds, and so $\sigma^\star$ is an optimal control.
		
	Now suppose that $\tilde{f}$ is decreasing on the interval $(0, \eta)$, and let $X^{\sigma^\star}_0 = x$, for some $x \in D \setminus \{0\}$. In this case $\sigma^\star$ is the weak control defined via \eqref{eq:conj-optimal-control-decr}, and $Z^{\sigma^\star}$ satisfies
	\begin{equation}
		\D Z^{\sigma^{\star}}_t = \D t + 2 \sum_i \ind{Z^{\sigma^\star}_t \in (s_i^2, r_{i+1}^2 \wedge R^2)}\sqrt{Z^{\sigma^\star}_t}\D W_t,
	\end{equation}
	where now the index $i$ runs from $0$ to $\inf\{k \in \NN : \; r_{k + 1} \ge R\}$. We see that $Z^{\sigma^\star}$ never hits the origin.
	
	We can make the same calculations as above to find that, for any $t \geq 0$,
	\begin{equation}
		V(X^{\sigma^\star}_t) - V(X^{\sigma^\star}_0) = - \int_0^t f(X^{\sigma^\star}_s) \D s + 2\int_0^t \sum_i \ind{Z^{\sigma^\star}_s \in (s_i^2, r_{i + 1}^2)}\sqrt{Z^{\sigma^\star}_s}\D W_s,
	\end{equation}
	and so the required martingale property holds once again. We conclude that $\sigma^\star$ is optimal for the weak value function.
\end{proof}

We required the smoothness conditions on the running cost $f$ in \Cref{ass:rad-symm} in order to show that the candidate value function is a viscosity solution at the origin. In \Cref{sec:exploding-cost}, we will relax these assumptions and extend the above result to include cost functions that have an infinite discontinuity at the origin. In this case, we cannot define a viscosity solution of the HJB equation \eqref{eq:hjb-2} at the origin, and so \Cref{thm:unique-viscosity} will no longer be applicable.

\section{Infinite cost at the origin}\label{sec:exploding-cost}
	We now extend \Cref{prop:radial-symmetric-value} by considering the case where the cost function is continuous on the whole domain, except at the origin where it may become infinite. We will show that the value function takes the same form as we saw in \Cref{prop:radial-symmetric-value}. We will also find growth conditions on the cost function under which the value function becomes infinite. We note that, in allowing the cost function to become infinite at the origin, we must take care to check that we still have equality between the strong value function $v^S$ and the weak value function $v^W$, as we showed in \Cref{prop:weak-strong} for the case of continuous cost functions. In a particular growth regime, we cannot prove the equality $v^S(0) = v^W(0)$ in dimension $d = 2$ using the tools of this section. We prove this equality in a parallel work \cite{CoRo22}, using the theory of Brownian filtrations.
	
	We relax the regularity conditions on the cost function $f$ from \Cref{ass:rad-symm}, as follows.
	
	\begin{ass}\label{ass:relaxed}
		We assume that
		\begin{enumerate}
		\item The domain is $D = B_R(0) \subset \RR^d$, for some $R > 0$ and $d \geq 2$;
		\item The cost function $f$ is radially symmetric; i.e.\ $f(x) = \tilde{f}(\abs{x})$, for some function $\tilde{f}: [0, R) \to \RR \cup\{\pm\infty\}$;
		\item The boundary cost $g$ is constant --- without loss of generality, we suppose that $g \equiv 0$;
		\item The cost function $f$ is continuous on $D \setminus \{0\}$;
		\item There exists $\eta > 0$ such that the cost function $\tilde{f}$ is monotone on the interval $(0, \eta)$;
		\item The one-sided derivative $\tilde{f}^\prime_+(r)$ exists for all $r > 0$ and changes sign only finitely many times.
	\end{enumerate}

	\end{ass}
	
	Note that we retain the fifth statement in this assumption to ensure that the cost function does not oscillate as it approaches the origin, and we retain the sixth statement so that there are finitely many switching points and these are well-defined.
	
	Having relaxed the conditions on the cost function $f$, we can no longer use the theory of viscosity solutions. To prove the following results, we once again treat the cases of increasing and decreasing costs separately, and we distinguish between regimes of slow and fast growth at the origin. The different growth regimes will be determined by the convergence of the integrals
	\begin{equation}
		\int_0^r \tilde{f}(s) \D s \qandq \int_0^r s \tilde{f}(s) \D s.
	\end{equation}
	
	\subsection{Cost functions increasing at the origin}
	
	We first consider cost functions that are increasing in some neighbourhood around the origin. In this case, we will find that radial motion, as defined in \Cref{def:radial}, is optimal close to the origin.
	
	\begin{prop}\label{prop:incr-slow-growth}
		Suppose that \Cref{ass:relaxed} holds and there exists $\eta > 0$ such that $\tilde{f}$ is negative and increasing on the interval $(0, \eta)$. Then the strong and weak value functions defined in \Cref{sec:problem-formulation} are equal, and we can write $v = v^S = v^W$. Moreover, for the candidate value $V$ defined in \Cref{def:candidate-value},
		\begin{equation}
			v = \begin{cases}
				V \in (-\infty, \infty), & \text{if} \quad \int_0^r \tilde{f}(s) \D s > - \infty, \quad \text{for any} \quad r > 0,\\
				- \infty, & \text{if} \quad \int_0^r \tilde{f}(s) \D s = - \infty, \quad \text{for any} \quad r > 0.
			\end{cases}
		\end{equation}
	\end{prop}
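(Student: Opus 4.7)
The plan is to approximate $\tilde{f}$ from above by the bounded sequence $\tilde{f}_n := \tilde{f} \vee (-n)$ for $n \in \NN$, and transfer the existing result \Cref{prop:radial-symmetric-value} to the limit. Each $\tilde{f}_n$ is continuous on $[0,R)$ (the only possible obstruction to continuity at $0$ is $\tilde{f}(r) \to -\infty$, which is eliminated by the truncation), non-positive, monotone on $(0,\eta)$, and identically equal to $-n$ on the region where $\tilde{f} < -n$, so it satisfies \Cref{ass:rad-symm}. By \Cref{prop:radial-symmetric-value} we have $v^S_n = v^W_n = V_n$, where $V_n$ is the candidate function built from $\tilde{f}_n$ via \Cref{def:candidate-value}.

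For the value functions I would use monotone convergence along any fixed (strong or weak) control. Since $0 \geq \tilde{f}_n \downarrow \tilde{f}$ pointwise on $D \setminus \{0\}$, applying monotone convergence to $-\tilde{f}_n \uparrow -\tilde{f}$ gives
\[
\EE^x \left[\int_0^\tau \tilde{f}_n(X^\sigma_s) \D s\right] \downarrow \EE^x \left[\int_0^\tau \tilde{f}(X^\sigma_s) \D s\right]
\]
for every $\sigma$. Combining the obvious bound $v \leq v_n$ with a choice of near-optimal $\sigma$ yields $v^S_n \downarrow v^S$ and $v^W_n \downarrow v^W$, so the two value functions coincide and equal the common limit, which we call $v$.

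It remains to analyse $\lim_n V_n$ via the explicit formula in \Cref{def:candidate-value}. Write $\alpha_n := \sup\{r : \tilde{f}(r) \leq -n\}$, so that $\tilde{f}_n \equiv -n$ on $(0,\alpha_n)$ and $\tilde{f}_n = \tilde{f}$ on $(\alpha_n, R)$. In Case A, the bound $r\abs{\tilde{f}(r)} \leq \int_0^r \abs{\tilde{f}(s)} \D s$ (valid because $\abs{\tilde{f}}$ is monotone decreasing near $0$) together with integrability of $\tilde{f}$ forces $n\alpha_n = \alpha_n\abs{\tilde{f}(\alpha_n)} \to 0$; this lets the defining condition for $r_1^n$ in \eqref{eq:caseI-switching} converge to that for $r_1$, so $r_1^n \to r_1$, and the later switching points depend only on $\tilde{f}$ away from $0$ and are eventually constant in $n$. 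Dominated convergence in the formula of \Cref{def:candidate-value}, with $\int_0^\cdot \tilde{f}(s) \D s$ providing a dominating function that is finite and continuous on $[0,R]$, then gives $V_n \to V$ pointwise, so $v = V$ and is finite. In Case B, monotonicity of $\tilde{f}$ gives for every $r > \alpha_n$ the inequality $\int_{\alpha_n}^r \tilde{f}(s) \D s - r\tilde{f}(r) \leq -\alpha_n \tilde{f}(r) = \alpha_n \abs{\tilde{f}(r)} < n\alpha_n$, so the defining inequality for $r_1^n$ is never satisfied and $r_1^n = R$ for all $n$ large. Consequently $V_n(x) = 2\int_{\abs{x}}^R \int_0^s \tilde{f}_n(t) \D t \D s$. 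The inner integral equals $-n\alpha_n + \int_{\alpha_n}^s \tilde{f}(t) \D t$, which diverges to $-\infty$ as $n \to \infty$ for every $s > 0$, so $V_n(x) \to -\infty$ for every $x \in D$; combined with $v_n = V_n$ this gives $v = -\infty$.

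The main obstacle will be the careful analysis of the first switching point $r_1^n$ and the passage to the limit in the radial-motion contribution near the origin. In Case A this hinges on the integrability estimate $n\alpha_n \to 0$ extracted from the monotonicity of $\tilde{f}$, and on verifying that $r_1^n$ remains bounded away from $0$ uniformly in $n$. In Case B the key step is the monotonicity-based argument that $r_1^n = R$ eventually, which both identifies $V_n$ explicitly and makes the divergence at $x = 0$ propagate throughout $D$.
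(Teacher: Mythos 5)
Your proof is essentially correct and uses the same high-level strategy as the paper (approximate the cost near the origin by bounded continuous functions, apply \Cref{prop:radial-symmetric-value}, and pass to the limit), but with several differences in the details that are worth noting. Your truncation $\tilde f_n = \tilde f \vee (-n)$ is a legitimate alternative to the paper's $\tilde f_N$, which instead freezes the cost at the value $\tilde f(1/N)$ on the ball $B_{1/N}(0)$; both produce bounded, continuous, near-origin-constant approximants satisfying \Cref{ass:rad-symm}. One small slip: the claim ``$0 \geq \tilde f_n$'' is not justified by the hypotheses, since $\tilde f$ is only assumed negative on $(0,\eta)$ and may be positive elsewhere on $D$. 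This does not break the argument --- one should instead note that $\tilde f_n \downarrow \tilde f$ with $\tilde f_1$ bounded and $\EE^x[\tau] < \infty$, and apply monotone convergence to $\tilde f_1 - \tilde f_n \uparrow \tilde f_1 - \tilde f \geq 0$. Your derivation of the weak--strong equality via $v^S = \lim_n v^S_n = \lim_n v^W_n = v^W$ (using \Cref{prop:weak-strong} at each bounded stage) is in fact slightly cleaner than the paper's, which invokes Theorem 4.5 of El Karoui and Tan a second time for the unbounded cost.

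Where you deviate most is Case B, $\int_0^r \tilde f(s)\,\D s = -\infty$. The paper handles this directly by exhibiting a single strategy (radial motion) and computing its cost with the one-dimensional Green's function, obtaining $-\infty$ immediately; this is shorter and has the virtue of also identifying an optimal control. You instead carry the truncation scheme all the way through, which works but requires the claim $r_1^n = R$ for $n$ large. Your monotonicity estimate $\int_{\alpha_n}^r \tilde f(s)\,\D s - r\tilde f(r) \leq -\alpha_n \tilde f(r) < n\alpha_n$ is valid only for $r \in (\alpha_n,\eta)$, because it uses that $\tilde f$ is increasing on this interval. For $r \in [\eta, R)$ you would need an additional (easy) argument, e.g.\ that $\int_0^\eta \tilde f_n \to -\infty$ while $\sup_{r\in[\eta,R)}\big(r\tilde f(r) - \int_\eta^r \tilde f(s)\,\D s\big)$ is finite, so the defining inequality for $r_1^n$ also fails on $[\eta,R)$ for $n$ large. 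Finally, your analysis of $V_n \to V$ in Case A via the estimate $n\alpha_n \to 0$ and tracking of the switching points is more detailed than the paper's (which simply asserts the convergence), and is a welcome addition, but would still require a proper argument that $r_1^n \to r_1$ and that the later switching points stabilise.
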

	
	\begin{remark}
		Note that, since $\tilde{f}$ is increasing on $(0, \eta)$, the function $V$ is defined in Case I of \Cref{def:candidate-value}, with $s_0 = 0$ and $r_1 = \inf\left\{r > 0 \colon \int_0^r \tilde{f}(s) \D s > r \tilde{f}(r)\right\}$. When $\int_0^r \tilde{f}(s) \D s > - \infty$ for any $r > 0$, the switching point $r_1$ is well-defined.
	\end{remark}
	
	\begin{proof}[Proof of \Cref{prop:incr-slow-growth}]
		First suppose that, for any $r > 0$, 
		\begin{equation}
			\int_0^r \tilde{f}(s) \D s > - \infty.	
		\end{equation}
		For $N \in \NN$, define an approximating sequence of functions $\tilde{f}_N:[0, R) \to \RR$ by
		\begin{equation}
			\tilde{f}_N(r) =
			\begin{cases}
				\tilde{f}(\frac{1}{N}), & r \leq \frac{1}{N},\\
				\tilde{f}(r), & r > \frac{1}{N},
			\end{cases}
		\end{equation}
		and define $f_N: D \to \RR$ by $f_N(x) = \tilde{f}_N(\abs{x})$ for $x \in D$. Then $f_N$ is continuous and bounded. Moreover, fixing $N > \frac{1}{\eta}$, we have the bound $f_N \geq f$. Now define $v^S_N: D \to \RR$ by
		\begin{equation}
			v^S_N(x) := \inf_{\sigma \in \mathcal{U}} \EE^x \left[\int_0^\tau f_N(X^\sigma_s) \D s\right], \quad x \in D,
		\end{equation}
		using the same notation as in the definition of the strong value function $v^S$ in \Cref{sec:problem-formulation}. Note that $v^S_N \geq v^S$.
		
		Let $V_N$ denote the candidate value function defined in Case I of \Cref{def:candidate-value} with the function $\tilde{f}$ replaced by $\tilde{f}_N$. Since \Cref{ass:rad-symm} is satisfied for the value function $v^S_N$, we can apply \Cref{prop:radial-symmetric-value} to see that $v^S_N = V_N$. We can also see that, for any $x \in D$, $\lim_{N \to \infty} V_N(x) = V(x)$ and $V(x)$ is finite, since $\int_0^r \tilde{f}(s) \D s > - \infty$ for any $r > 0$. We will show that $\lim_{N \to \infty}v_N^S(x) = v^S(x)$ and conclude that $v^S(x) = V(x)$.
		
		Fix $\sigma \in \mathcal{U}$ and $x \in D$. We have
		\begin{equation}\label{eq:approx-incr}
		\begin{split}
			\EE^x \left[\int_0^\tau \tilde{f}_N(\abs{X^\sigma_s}) \D s\right] & = \EE^x \left[\int_0^\tau \tilde{f}(\abs{X^\sigma_s})\mathds{1}_{\left\{\abs{X^\sigma_s} \in (\frac{1}{N}, R)\right\}} \D s\right] + \tilde{f}\left(\frac{1}{N}\right) \EE^x \left[\int_0^\tau \mathds{1}_{\left\{\abs{X^\sigma_s} \leq \frac{1}{N}\right\}}\right].
		\end{split}
		\end{equation}
		Define $K: = \sup \{f(x) \colon x \in D\}$ and note that $K < \infty$ by continuity of $f$ in $D \setminus \{0\}$. Then the sequence
		\begin{equation}
			\left(\int_0^\tau \tilde{f}(\abs{X^\sigma_s})\mathds{1}_{\left\{\abs{X^\sigma_s} \in (\frac{1}{N}, R)\right\}} \D s\right)_{N \in \NN}
		\end{equation}
		is decreasing for $N > \frac{1}{\eta}$ and bounded above by $\tau K$. Since $\tau$ has finite expectation by \Cref{rem:exit-time}, we can apply monotone convergence (see e.g.\ Theorem 1 of \cite[Chapter II, \S 6]{shiryaev_probability_1996}) to show that
		\begin{equation}
		\begin{split}
			\lim_{N \to \infty} \EE^x \left[\int_0^\tau \tilde{f}(\abs{X^\sigma_s})\mathds{1}_{\left\{\abs{X^\sigma_s} \in (\frac{1}{N}, R)\right\}} \D s\right] & = \EE^x \left[\lim_{N \to \infty}\int_0^\tau \tilde{f}(\abs{X^\sigma_s})\mathds{1}_{\left\{\abs{X^\sigma_s} \in (\frac{1}{N}, R)\right\}} \D s\right] = \EE^x \left[\int_0^\tau \tilde{f}(\abs{X^\sigma_s})\D s\right].
		\end{split}
		\end{equation}
		
		We will show that the second term of \eqref{eq:approx-incr} vanishes as $N \to \infty$ by referring to \Cref{prop:step-incr} on the control problem for a step cost function. Note that $\tilde{f}(\frac{1}{N}) < 0$. For $x \neq 0$, we can choose $N > \frac{1}{\abs{x}}$, so that, by \Cref{prop:step-incr},
		\begin{equation}
		\begin{split}
			0 > \tilde{f}\left(\frac{1}{N}\right) \EE^x \left[\int_0^\tau \mathds{1}_{\left\{\abs{X^\sigma_s} \leq \frac{1}{N}\right\}}\right] & = - \tilde{f}\left(\frac{1}{N}\right)\EE^x \left[\int_0^\tau -\mathds{1}_{\left\{\abs{X^\sigma_s} \leq \frac{1}{N}\right\}}\right]\\
			& \geq - \frac{2}{N}\tilde{f}\left(\frac{1}{N}\right)\left(R - \abs{x}\right) \xrightarrow{N \to \infty} 0,
		\end{split}
		\end{equation}
		using the condition that $\int_0^r\tilde{f}(s) \D s > - \infty$ to find the limit.
		
		For $x = 0$, \Cref{prop:step-incr} gives us
		\begin{equation}
		\begin{split}
			0 > \tilde{f}\left(\frac{1}{N}\right)\EE^0\left[\int_0^\tau \mathds{1}_{\left\{\abs{X^\sigma} \leq \frac{1}{N}\right\}}\D s\right] & = - \tilde{f}\left(\frac{1}{N}\right)\EE^0\left[-\int_0^\tau \mathds{1}_{\left\{\abs{X^\sigma} \leq \frac{1}{N}\right\}}\D s\right]\\
			& \geq \tilde{f}\left(\frac{1}{N}\right)\left(\frac{2R}{N} - \frac{1}{N^2}\right) \xrightarrow{N \to \infty} 0.
		\end{split}
		\end{equation}
		Hence
		\begin{equation}
			\lim_{N \to \infty}\EE^x \left[\int_0^\tau \tilde{f}_N(\abs{X^\sigma_s}) \D s\right] = \EE^x \left[\int_0^\tau \tilde{f}(\abs{X^\sigma_s}) \D s\right],
		\end{equation}
		for any $\sigma \in \mathcal{U}$, $x \in D$.
		
		Now fix $x \in D$ and $\varepsilon > 0$ and choose $\sigma^\varepsilon$ to be an $\varepsilon$-optimal strategy for the cost function $f$; i.e.
		\begin{equation}
			\EE^x \left[\int_0^\tau f(X^{\sigma^\varepsilon}_s) \D s\right] \leq v^S(x) + \varepsilon.
		\end{equation}
		Then
		\begin{equation}
		\begin{split}
			v^S(x) + \varepsilon & \geq \EE^x \left[\int_0^\tau f(X^{\sigma^\varepsilon}_s) \D s\right] = \lim_{N \to \infty}\EE^x \left[\int_0^\tau f_N(X^{\sigma^\varepsilon}_s) \D s\right] \geq \lim_{N \to \infty}v^S_N(x) \geq v^S(x).
		\end{split}
		\end{equation}
		Taking the limit as $\varepsilon \downarrow 0$, we see that
		\begin{equation}
			v^S(x) = \lim_{N \to \infty}v_N^S(x),
		\end{equation}
		and by uniqueness of the limit, we have that $v^S(x) = V(x)$.
		
		As in \Cref{prop:weak-strong}, we can apply Theorem 4.5 of \cite{el_karoui_capacities_2013-1} to see that $v^S = v^W$. Since $f$ is continuous in $D \setminus \{0\}$, upper semicontinuous at $0$, and bounded above by a constant, we can deduce that the conditions of Theorem 4.5 of \cite{el_karoui_capacities_2013-1} are met in the same way as in the proof of \Cref{prop:weak-strong}. Hence $v^W = v^S = V$.
		
		Now suppose that, for any $r > 0$,
		\begin{equation}
			\int_0^r \tilde{f}(s) \D s = - \infty.
		\end{equation}
		We will show that radial motion is an optimal strategy and that this strategy gives a negative infinite cost. Let the control $\sigma^1$ be as defined in \Cref{def:radial}, and define $X^{\sigma^1}$ by
		\begin{equation}
			X^{\sigma^1}_t = x + \int_0^t \sigma^1_s \D B_s, \quad t \geq 0.
		\end{equation}
		Let $W$ be the first component of the Brownian motion $B$. First suppose that $x \neq 0$. Then, for any $t \geq 0$,
		\begin{equation}
			X^{\sigma^1}_t = x + \frac{x}{\abs{x}}W_t,
		\end{equation}
		and so
		\begin{equation}
		\begin{split}
			\EE^x \left[\int_0^\tau \tilde{f}\big(\big\lvert X^{\sigma^1}_s \big \rvert\big)\D s\right] = \EE^{\abs{x}}\left[\int_0^\tau \tilde{f}(W_s) \ind{W_s \geq 0}\D s \right] + \EE^{\abs{x}}\left[\int_0^\tau \tilde{f}(-W_s) \ind{W_s < 0} \D s\right].
		\end{split}
		\end{equation}
		We can now use the Green's function $G$ for the one-dimensional Brownian motion $W$ on the interval $(-R, R)$, as calculated in \Cref{ex:step-incr}. By Corollary 3.8 of \cite[Chapter VII]{revuz_continuous_1999}, we see that
		\begin{equation}
		\begin{split}
			\EE^x \left[\int_0^\tau \tilde{f}(X^{\sigma^1}_s)\D s\right] & = 2 \int_0^R G(\abs{x}, y) \tilde{f}(y) \D y + 2 \int_{-R}^0 G(\abs{x}, y) \tilde{f}(-y) \D y\\
			& = \frac{\abs{x} + R}{R}\int_{\abs{x}}^R (R - y) \tilde{f}(y) \D y + \frac{R - \abs{x}}{R}\int_0^{\abs{x}}(y + R) \tilde{f}(y) \D y + \frac{R - \abs{x}}{R}\int_{-R}^0 (y + R) \tilde{f}(-y) \D y.
		\end{split}
		\end{equation}
		Making a change of variables $y \mapsto -y$ in the last integral gives
		\begin{equation}
		\begin{split}
			\EE^x \left[\int_0^\tau \tilde{f}(X^{\sigma^1}_s)\D s\right] & = 2\int_{\abs{x}}^R(R - y) \tilde{f}(y) \D y + 2 (R - \abs{x})\int_0^{\abs{x}} \tilde{f}(y) \D y.
		\end{split}
		\end{equation}
		Since $f$ is bounded above and $\int_0^{\abs{x}} \tilde{f}(y) \D y = - \infty$, we have
		\begin{equation}
			\EE^x \left[\int_0^\tau \tilde{f}(X^{\sigma^1}_s)\D s\right] = - \infty.
		\end{equation}
		
		Now let $x = 0$. Then, $X^{\sigma^1}_t = e_1 W_t$, for $t \geq 0$. Using the symmetry of the Green's function $G$ for $W$ about zero, together with the growth condition on $f$, we have
		\begin{equation}\label{eq:green-incr-cost}
		\begin{split}
			\EE^0\left[\int_0^\tau \tilde{f}\big(\big \lvert X^{\sigma^1}_s \big \rvert \big)\D s\right] & = 2\EE^0\left[\int_0^\tau \tilde{f}(W_s)\ind{W_s \geq 0}\D s\right]\\
			& = 4\int_0^R G(0, y) \tilde{f}(y)\D y = 2\int_0^R (R - y) \tilde{f}(y) \D y = - \infty.
		\end{split}
		\end{equation}
		
		We conclude that
		\begin{equation}
		\begin{split}
			v^W(x) \leq v^S(x) \leq \EE^x \left[\int_0^\tau \tilde{f}(X^{\sigma^1}_s)\D s\right] = - \infty. \qedhere
		\end{split}
		\end{equation}
	\end{proof}
	
	We have shown that, for cost functions increasing at the origin, there is a dichotomy depending on the convergence of $\int_0^r \tilde{f}(s) \D s$. When $\int_0^r \tilde{f}(s) \D s > - \infty$ for any $r > 0$, the value function is finite and equal to $V$, and when $\int_0^r \tilde{f}(s) \D s = - \infty$ for any $r > 0$, the value is identically equal to negative infinity.
	
	\subsection{Cost functions decreasing at the origin}\label{sec:decreasing}
	
	We now consider cost functions that are decreasing in some neighbourhood around the origin. Excluding the origin from this neighbourhood, an optimal strategy is tangential motion, as defined in \Cref{def:tangential}. We will first show that, away from the origin, the form of the value function is unchanged from the value function in \Cref{prop:radial-symmetric-value}.
	
	\begin{prop}\label{prop:decr-away-from-origin}
		Suppose that \Cref{ass:relaxed} holds and there exists $\eta > 0$ such that $\tilde{f}$ is positive and decreasing on the interval $(0, \eta)$. Then, for $x \in D \setminus \{0\}$, $v(x) = v^S(x) = v^W(x) = V(x) \in (-\infty, \infty)$, where $V$ is the candidate value function defined in \Cref{def:candidate-value}.
	\end{prop}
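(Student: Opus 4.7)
The plan follows the truncation approach of \Cref{prop:incr-slow-growth}, exploiting the key structural fact that under the candidate optimal weak control \eqref{eq:conj-optimal-control-decr}, a process started from $x \in D \setminus \{0\}$ has radius bounded below by $\delta(x) := \min(|x|, s_0) > 0$ almost surely: tangential motion deterministically increases the radius, while radial motion on each interval $(s_{i-1}, r_i)$ is confined to that interval.

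The first step is to set up the approximation. For $N \in \NN$ with $1/N < \eta$, define $\tilde{f}_N(r) := \tilde{f}(r \vee 1/N)$; this is bounded, continuous on $[0, R)$, constant on $(0, 1/N)$, non-increasing on $(0, \eta)$, and its right derivative changes sign only finitely often. Thus \Cref{ass:rad-symm} holds for $\tilde f_N$, and \Cref{prop:radial-symmetric-value} yields $v_N^S = v_N^W = V_N$, where $V_N$ is built via \Cref{def:candidate-value}. Because $\tilde f_N = \tilde f$ on $[1/N, R)$ and is constant on $(0, 1/N)$, the switching recursion \eqref{eq:caseII-switching} produces the same sequence $(r_i, s_i)$ for $\tilde f_N$ as for $\tilde f$ whenever $1/N < s_0$, and at $|x| > 1/N$ the Case II formula in \Cref{def:candidate-value} involves $\tilde f$ only on subintervals of $[1/N, R)$. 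Hence $V_N(x) = V(x)$ for every $x \in D\setminus\{0\}$ and all $N$ large, and finiteness of $V(x)$ for such $x$ is immediate from the formula.

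For the weak value, the inequality $\tilde f_N \le \tilde f$ gives $V(x) = V_N(x) = v_N^W(x) \le v^W(x)$. Conversely, fix $x \in D\setminus\{0\}$ and $N$ with $1/N < \delta(x)$, and use the optimal weak control $\sigma^\star$ of \eqref{eq:conj-optimal-control-decr} associated to $\tilde f_N$: by the structural observation, the path satisfies $|X^{\sigma^\star}_t| \ge \delta(x) > 1/N$ almost surely, so $f = f_N$ along every realisation, and \Cref{prop:radial-symmetric-value} applied to $\tilde f_N$ gives
\[
v^W(x) \le \EE\left[\int_0^\tau f(X^{\sigma^\star}_s) \D s\right] = \EE\left[\int_0^\tau f_N(X^{\sigma^\star}_s) \D s\right] = V_N(x) = V(x).
\]
This establishes $v^W(x) = V(x)$; combined with the trivial bound $v^S \ge v^W$ this yields $V(x) \le v^S(x)$.

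The main obstacle is the reverse inequality $v^S(x) \le V(x)$, since $\sigma^\star$ involves sticky reflection at the boundaries $s_i$ and therefore cannot be realised as the solution of an SDE in the strong sense. The idea is to approximate $\sigma^\star$ by genuine strong controls: smoothing the sticky reflection in a shell of width $\kappa$ around each $s_i$ by a small radial diffusion produces an SDE with locally Lipschitz coefficients away from the origin, whose unique strong solution $X^{\sigma^{N,\kappa}}$ defines a control $\sigma^{N,\kappa} \in \mathcal U$ with $\tilde f_N$-cost within $O(\kappa)$ of $V_N(x) = V(x)$ and whose radius remains bounded below by $\delta(x) - O(\kappa)$. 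Choosing $\kappa$ small enough that $\delta(x) - O(\kappa) > 1/N$ forces $f = f_N$ along every realisation, so the $f$-cost coincides with the $f_N$-cost; sending $\kappa \downarrow 0$ then yields $v^S(x) \le V(x)$ and closes the argument. The technical heart is to quantify that the smoothing perturbation of $\sigma^\star$ costs only $O(\kappa)$ while preserving the radius lower bound, which I would approach via the one-dimensional diffusion obtained by projecting onto the radial component using \Cref{prop:radius-sde}.
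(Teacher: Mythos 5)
Your treatment of the weak value function matches the paper's: truncate to $\tilde f_N(r) = \tilde f(r\vee 1/N)$, observe that $V_N(x) = V(x)$ for $x\neq 0$ and $N$ large, and exploit that the weak-optimal control $\sigma^\star$ of \Cref{prop:radial-symmetric-value} keeps the radius above $\min(|x|, s_0) > 1/N$, so the $f$-cost and $f_N$-cost coincide along its paths. This is precisely the paper's argument for $v^W(x) = V(x)$.

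For the strong value your route diverges from the paper's, and it contains a genuine gap. The paper does not construct strong near-optimal controls explicitly: since each $f_N$ is bounded, \Cref{prop:weak-strong} already gives $v^S_N(x) = v^W_N(x) = V(x)$, so strong $\varepsilon$-suboptimal controls for the truncated problems exist for free, and the paper then compares the $f$-cost to the $f_N$-cost of such a control by a monotone convergence argument ($\tilde f_N \uparrow \tilde f$). You instead propose to build, by smoothing the sticky boundaries at each $s_i$, a strong control $\sigma^{N,\kappa}$ with $\tilde f_N$-cost within $O(\kappa)$ of $V(x)$ and radius bounded below by $\delta(x) - O(\kappa)$. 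Both claims are left as assertions, and neither is routine: once you smooth, the radial projection is a degenerate one-dimensional diffusion whose volatility vanishes at the bottom of each mollified shell while retaining strictly positive drift, and proving the $O(\kappa)$ cost error requires an occupation-time or scale/speed analysis near each $s_i$ that you do not carry out. It is also not automatic that the radius cannot cross $\delta(x) - O(\kappa)$ under the perturbed dynamics; that needs to be checked against the specific mollification. Until these estimates are supplied, the inequality $v^S(x) \le V(x)$ is not established. The construction is also heavier than necessary: the existence of strong near-optimal controls for each $v^S_N$ is already given by \Cref{prop:weak-strong}, so the real difficulty is only transferring the cost bound from $f_N$ to $f$ for an arbitrary such control --- which is what the paper handles by a convergence argument rather than by a pathwise radius bound.
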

	
	\begin{remark}
		In this case, since $\tilde{f}$ is decreasing on $(0, \eta)$, $V$ is defined in Case II of \Cref{def:candidate-value}.
	\end{remark}
	
	\begin{proof}[Proof of \Cref{prop:decr-away-from-origin}]
		For $N \in \NN$, define $\tilde{f}_N$, $f_N$ and $v^S_N$ as in the proof of \Cref{prop:incr-slow-growth}. Also define $v_N^W: D \to \RR$ by
		\begin{equation}
			v_N^W(x) := \inf_{\PP \in \mathcal{P}_x}\EE^\PP\left[\int_0^\tau f_N(X_s) \D s\right], \quad x \in D,
		\end{equation}
		using the same notation as in the definition of the weak value function $v^W$ in \Cref{sec:problem-formulation}. Now, for $N > \frac{1}{\eta}$, we have $\tilde{f}_N \leq \tilde{f}$, $f_N \leq f$, $v_N^S \leq v^S$ and $v_N^W \leq v^W$. Recall that, by \Cref{prop:radial-symmetric-value}, $v^W_N = v^S_N = V_N$, where $V_N$ is the candidate value function defined in Case II of \Cref{def:candidate-value} with the cost function $\tilde{f}$ replaced by $\tilde{f}_N$.
		
		Fix $x \in D \setminus\{0\}$ and $N > \frac{1}{\abs{x}} \vee \frac{1}{\eta}$. Then we can see that $V_N(x) = V(x) \in (-\infty, \infty)$, so $v^S_N(x) = v^W_N(x) = V(x)$. We will now show that $v^W(x) = v^S(x) = V(x)$.
		
		Let $\sigma^\star$ be the weak control defined via \eqref{eq:conj-optimal-control-decr}. Since $\tilde{f}^N$ is decreasing in the interval $(0, \eta)$, \Cref{prop:radial-symmetric-value} shows that $\sigma^\star$ is optimal for $v^W_N$, and so
		\begin{equation}\label{eq:approx-decr}
		\begin{split}
			v_N^W(x) & = \EE^x \left[\int_0^\tau \tilde{f}_N\big(\big\lvert X^{\sigma^\star}_s\big\rvert\big)\D s\right]\\
			& = \tilde{f}\left(\frac{1}{N}\right)\EE^x \left[\int_0^\tau \mathds{1}_{\left\{\abs{X^{\sigma^\star}_s} \leq \frac{1}{N}\right\}}\D s\right] + \EE^x \left[\int_0^\tau \tilde{f}\big(\big\lvert X^{\sigma^\star}_s\big\rvert\big)\mathds{1}_{\left\{\abs{X^{\sigma^\star}_s} \in (\frac{1}{N}, R)\right\}}\D s\right].
		\end{split}
		\end{equation}
		When $\abs{X^{\sigma^\star}_t} \in (0, \eta)$, the radius process $t \mapsto \abs{X_t^{\sigma^\star}}$ is deterministically increasing, by \Cref{lem:squared-radius}. Therefore, $\mathds{1}_{\left\{\abs{X^{\sigma^\star}_t} \leq \frac{1}{N}\right\}} = 0$, for all $t \geq 0$, since $\abs{x} > \frac{1}{N}$. Hence, by \eqref{eq:approx-decr} and the definition of $v^W$, we have
		\begin{equation}
			v^W(x) \geq v_N^W(x) = \EE^x \left[\int_0^\tau \tilde{f}\big(\big\lvert X^{\sigma^\star}_s\big\rvert\big)\D s\right] \geq v^W(x),
		\end{equation}
		and so $v^W(x) = v^W_N(x) = V_N(x) = V(x)$.
		
		Now fix $\varepsilon > 0$ and let $\sigma^\varepsilon \in \mathcal{U}$ be $\varepsilon$-suboptimal for $v^S_N(x)$. By \Cref{prop:weak-strong}, $v_N^S(x) = v_N^W(x)$. Therefore
		\begin{equation}
			V(x) = v^W_N(x) = v^S_N(x) \leq v^S(x) \leq \EE^x \left[\int_0^\tau \tilde f(|X^{\sigma^\varepsilon}_t|) \D t\right] \leq \liminf_{N \to \infty} \EE^x \left[\int_0^\tau \tilde f_N(|X^{\sigma^\varepsilon}_t|) \D t\right] \leq V(x) + \varepsilon,
		\end{equation}
		and we conclude that $v^W(x)  = v^S(x) = V(x)$.
		\end{proof}
	
	At the origin, we have not shown that there exists an optimal control. The function $\sigma^0$ introduced in \Cref{def:tangential} is not defined at the origin, and so we require an approximation to tangential motion. We consider different growth rates separately, as we did for increasing costs.
		
	\begin{prop}\label{prop:decr-slow-growth}
		Suppose that \Cref{ass:relaxed} holds and there exists $\eta > 0$ such that $\tilde{f}$ is positive and decreasing on the interval $(0, \eta)$. Suppose further that, for any $r > 0$,
		\begin{equation}
			\int_0^r \tilde{f}(s) \D s < \infty.
		\end{equation}
		Then $v(0) = v^S(0) = v^W(0) = V(0) \in (-\infty, \infty)$, where $V$ is the candidate value defined in \Cref{def:candidate-value}.
	\end{prop}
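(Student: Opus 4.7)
The plan is to sandwich $v(0)$ between two copies of $V(0)$. Since every strong control induces a weak control through its law, $v^W(0) \leq v^S(0)$ is automatic; it thus suffices to prove the lower bound $V(0) \leq v^W(0)$ and the upper bound $v^S(0) \leq V(0)$. Note first that $V(0)$ is finite: in Case II of \Cref{def:candidate-value}, $\tilde f$ enters only through integrals of the form $\int_0^a \tilde f$ and $\int_0^a s\tilde f(s) \D s$, and $s \leq a$ combined with $\int_0^r \tilde f(s) \D s < \infty$ forces both to be finite.

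For the lower bound I would reuse the truncation from the proof of \Cref{prop:decr-away-from-origin}: set $\tilde f_N(r) := \tilde f(r \vee \tfrac{1}{N})$. For $N > 1/\eta$, $\tilde f_N$ is bounded, continuous, radially symmetric, constant on $[0,1/N]$, and satisfies $\tilde f_N \leq \tilde f$; it verifies \Cref{ass:rad-symm}, so \Cref{prop:radial-symmetric-value} gives $v^W_N = V_N$. Because $\tilde f_N$ and $\tilde f$ agree outside $[0,1/N]$, the switching points defined by \eqref{eq:caseII-switching} coincide for both problems once $1/N$ is below every switching point, and running \Cref{alg:solution-decr} backwards from $V_N(R) = 0$ yields $V_N \equiv V$ on $[s_0, R]$. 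The ODE $w_0'(r) = -2 r \tilde f(r)$ (respectively with $\tilde f_N$ in place of $\tilde f$) then gives
\begin{equation*}
0 \leq V_N(0) - V(0) = 2 \int_0^{1/N} s \bigl( \tilde f(s) - \tilde f(1/N) \bigr) \D s \leq 2 \int_0^{1/N} s \tilde f(s) \D s \;\xrightarrow{N \to \infty}\; 0,
\end{equation*}
using the finiteness of $\int_0^r s \tilde f(s) \D s$. Combined with $v^W_N \leq v^W$, passing to the limit gives $V(0) \leq v^W(0)$.

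For the upper bound I would build an explicit approximating strong control. Fix $\varepsilon \in (0, \eta \wedge s_0)$ and $\delta > 0$, and let $\nu_t = \sigma^1(0)$ (radial motion along $e_1$) up to $\tau_\varepsilon := \inf\{t \geq 0 : \abs{X^\nu_t} = \varepsilon\}$, so that $X^\nu_t = W_t e_1$ for $t \leq \tau_\varepsilon$, where $W$ is the first component of $B$, and $X^\nu_{\tau_\varepsilon} \in \{\pm \varepsilon e_1\}$. By \Cref{prop:decr-away-from-origin}, $v^S(\pm \varepsilon e_1) = \tilde V(\varepsilon)$, so one selects a $\delta$-optimal strong control from each of these two points, concatenates after $\tau_\varepsilon$ via the strong Markov property, and uses the tower property to obtain
\begin{equation*}
v^S(0) \leq \EE^0 \!\left[ \int_0^{\tau_\varepsilon} \tilde f(\abs{W_s}) \D s \right] + \tilde V(\varepsilon) + \delta.
\end{equation*}
The Green's function for one-dimensional Brownian motion on $[-\varepsilon, \varepsilon]$, as used in \Cref{ex:step-incr}, evaluates the first term to $2 \int_0^\varepsilon (\varepsilon - y) \tilde f(y) \D y \leq 2\varepsilon \int_0^\varepsilon \tilde f(y) \D y$, which vanishes as $\varepsilon \to 0$ thanks to $\int_0^r \tilde f(s) \D s < \infty$. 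The representation $V(x) = V(0) - 2 \int_0^{\abs{x}} s \tilde f(s) \D s$ for $\abs{x} \leq s_0$, which falls out of the construction of $w_0$ in \Cref{alg:solution-decr}, ensures $\tilde V(\varepsilon) \to V(0)$. Sending $\varepsilon, \delta \to 0$ gives $v^S(0) \leq V(0)$, completing the squeeze.

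The main technical point is the measurable concatenation after $\tau_\varepsilon$: one must select $\delta$-optimal strong controls from each of the two possible endpoints $\pm \varepsilon e_1$ (which carry equal value by radial symmetry) and glue them using the strong Markov property. This is routine now that \Cref{prop:decr-away-from-origin} has established $v^S = V$ on $D \setminus \{0\}$.
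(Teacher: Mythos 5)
Your proof is correct and rests on the same truncation $\tilde{f}_N(r) = \tilde{f}(r \vee 1/N)$ that the paper uses, but otherwise takes a noticeably different route. The paper establishes $v^W(0) = V(0)$ by constructing a second weak control $\sigma^\delta$ (radial up to radius $\delta$, tangential thereafter), computing the gap $E_N(\delta)$ between its expected cost under $f$ and $v^W_N(0)$ via the Green's function, and showing this gap vanishes as $N \to \infty$ then $\delta \to 0$; a separate argument with $\varepsilon$-suboptimal controls and monotone convergence then handles $v^S$. Your argument is more economical: the lower bound $V(0) \leq v^W(0)$ comes out directly from $v^W_N(0) = V_N(0) \leq v^W(0)$ and the elementary estimate $\lvert V(0) - V_N(0)\rvert \leq 2\int_0^{1/N}s\tilde{f}(s)\D s \to 0$, with no auxiliary control needed; and the upper bound $v^S(0) \leq V(0)$ is obtained by exhibiting a single explicit strong control (radial motion in $B_\varepsilon(0)$ concatenated at $\tau_\varepsilon$ with $\delta$-optimal controls from $\pm\varepsilon e_1$) whose cost is estimated directly. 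The concatenation is unproblematic here because there are only two exit points, $v^S = V$ is known at both by \Cref{prop:decr-away-from-origin}, and the strong Markov property at the stopping time $\tau_\varepsilon$ makes the glued control $\FF$-progressive. Your sandwich $V(0) \leq v^W(0) \leq v^S(0) \leq V(0)$ then delivers all claimed equalities in one pass. Two minor remarks. First, there is a sign slip in the displayed estimate: since $\tilde{f}_N \leq \tilde{f}$ and $\tilde{f}$ is decreasing near $0$, one has $V(0) - V_N(0) = 2\int_0^{1/N}s\bigl(\tilde{f}(s) - \tilde{f}(1/N)\bigr)\D s \geq 0$, so the left side should read $V(0) - V_N(0)$ rather than $V_N(0) - V(0)$; the bound and the limit are unaffected. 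Second, one should note that the decomposition of the cost at $\tau_\varepsilon$ requires $\EE^0\bigl[\int_0^{\tau_\varepsilon}\tilde{f}(\lvert W_s\rvert)\D s\bigr]$ to be finite, which your Green's function computation indeed shows.
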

	
	\begin{proof}
		For $N \in \NN$, define $\tilde{f}_N$, $f_N$, $v^W_N$ and $v^S_N$ as in the proof of \Cref{prop:incr-slow-growth}. Letting $V_N$ be the candidate value function in Case II of \Cref{def:candidate-value} with $\tilde{f}$ replaced by $\tilde{f}_N$, we have $v^W_N(0) = v_N^S(0) = V_N(0)$, by \Cref{prop:radial-symmetric-value}. We also see that $\lim_{N \to \infty}V_N(0) = V(0)$, and the value $V(0)$ is finite due to the growth condition on $\tilde{f}$. We will show that $v^W(0) = \lim_{N \to \infty} v^W_N(0)$ and conclude that $v^W(0) = V(0)$.
	
		Fix $\delta \in (0, \eta)$ and $N > \frac{1}{\delta}$. Denote by $e_1$ the unit vector in the first coordinate direction. Let $X^{\sigma^N}$ be a weak solution of the SDE $\D X_t = \sigma^N_t(X) \D B_t$, where
		\begin{equation}
			\sigma^{N}_t(X) =
			\begin{cases}
				\begin{bmatrix}
				e_1; & 0; & \dotsc; & 0
			\end{bmatrix},
			& \text{for} \quad \abs{X_t} < \frac{1}{N},\\
			 \sigma^0(X_t), &\text{for} \quad \abs{X_t} \in \big[\frac{1}{N}, \eta\big),
			\end{cases}
		\end{equation}
		and the function $\sigma^0$ is defined as in \Cref{def:tangential}.
		By \Cref{rem:weak-control}, $\sigma^N$ defines a weak control. Since $\tilde{f}_N$ is constant on $(0, \frac{1}{N})$ and decreasing on $(\frac{1}{N}, \eta)$, by \Cref{prop:radial-symmetric-value}, we have
		\begin{equation}
			v^S_N(0) = v^W_N(0) = \EE^0\left[\int_0^\tau \tilde{f}_N\big(\big\lvert X^{\sigma^{N}}_s\big\rvert\big)\D s\right].
		\end{equation}
		Also define $\sigma^\delta$ to coincide with $\sigma^{N}$ except that we set $\sigma^\delta_t(X) =
			\begin{bmatrix}
				e_1; & 0; & \dotsc; & 0
			\end{bmatrix}$, for $\abs{X_t} \in [\frac{1}{N}, \delta)$.
		
		Under either control $\sigma^N$ or $\sigma^\delta$, the process $t \mapsto \abs{X_t}$ is deterministically increasing on the interval $(\delta, \eta)$, by \Cref{lem:deterministically-increasing}. Therefore, writing $\tau^\delta$ for the exit time from the ball $B_\delta(0)$, the error between the value $v_N^W(0)$ and the expected cost of choosing the control $\sigma^\delta$ with the cost $f$ is
		\begin{equation}
		\begin{split}
			0 \leq E_N(\delta) := & \EE^0\left[\int_0^\tau \tilde{f}\big(\big \lvert X^{\sigma^\delta}_s \big \rvert \big) \D s\right] - v^W_N(0)\\
			= & \EE^0\left[\int_0^{\tau_\delta} \tilde{f}\big(\big\lvert X^{\sigma^\delta}_s\big \rvert\big) \D s\right] - \EE^0\left[\int_0^{\tau_\delta} \tilde{f}_N\big(\big\lvert X^{\sigma^{N}}_s\big\rvert\big) \D s\right].
		\end{split}
		\end{equation}
		In the ball $B_\delta(0)$, the process $X^{\sigma^\delta}$ is equal to a one-dimensional Brownian motion in the direction $e_1$ and so, making a calculation with the Green's function similar to \eqref{eq:green-incr-cost} in the proof of \Cref{prop:incr-slow-growth}, we find that
		\begin{equation}
		\begin{split}
			\EE^0\left[\int_0^{\tau_\delta} \tilde{f}\big( \big\lvert X^{\sigma^\delta}_s\big\rvert\big) \D s\right] & = 2\int_0^\delta \left(\delta - y\right) \tilde{f}(y) \D y.
		\end{split}
		\end{equation}
		We now compute the expected cost under the control $\sigma^{N}$. When $\big\lvert X^{\sigma^N}_t \big\rvert \in (\frac{1}{N}, \delta)$, the process $X^{\sigma^N}$ follows tangential motion, and so we can calculate
		\begin{equation}
			\EE^{\frac{1}{N}}\left[\int_0^{\tau_\delta}\tilde{f}_N\big(\big\lvert X^{\sigma^{N, \varepsilon}}_s\big\rvert\big)\D s\right] = \int_{0}^{\delta^2 - N^{-2}} \tilde{f}\big(\sqrt{N^{-2} + s}\big)\D s = 2 \int_{\frac{1}{N}}^\delta s \tilde{f}(s) \D s.
		\end{equation}
		In the ball $B_{\frac{1}{N}}(0)$, the process $X^{\sigma^{N}}$ is a one-dimensional Brownian motion and so, making another calculation with the Green's function, we can write
		\begin{equation}
		\begin{split}
			\EE^0\left[\int_0^{\tau_\delta} \tilde{f}_N\big(\big\lvert X^{\sigma^{N}}_s\big\rvert\big) \D s\right] & = \EE^0\left[\int_0^{\tau_{\frac{1}{N}}} \tilde{f}_N\big(\big\lvert X^{\sigma^{N}}_s\big\rvert\big) \D s\right] + \EE^{\frac{1}{N}}\left[\int_0^{\tau_\delta}\tilde{f}_N\big(\big\lvert X^{\sigma^{N}}_s\big\rvert\big) \D s\right]\\
			& = \frac{1}{N^2}\tilde{f}\left(\frac{1}{N}\right) + 2\int_{\frac{1}{N}}^\delta y \tilde{f}(y) \D y.
		\end{split}
		\end{equation}
		Therefore the error is
		\begin{equation}
			E_N(\delta) = 2 \int_0^\delta (\delta - y) \tilde{f}(y) \D y - 2 \int_{\frac{1}{N}}^\delta y \tilde{f}(y) \D y - \frac{1}{N^2} \tilde{f}\left(\frac{1}{N}\right).
		\end{equation}
		Since $\int_0^r \tilde{f}(s) \D s < \infty$, for any $r > 0$, we can take the limit as $N \to \infty$ to get
		\begin{equation}
			E(\delta) := \lim_{N \to \infty}E_N(\delta) = 2 \int_0^\delta \left(\delta - 2y\right)\tilde{f}(y) \D y,
		\end{equation}
		and then taking the limit as $\delta \to 0$ gives
		\begin{equation}\label{eq:error-limit}
		\begin{split}
			0 \leq E(\delta) & = 2 \int_0^\delta \left(\delta - 2y\right)\tilde{f}(y) \D y \xrightarrow{\delta \to 0} 0.
		\end{split}
		\end{equation}
		
		Returning to the definition of $E_N(\delta)$, for fixed $\delta \in (0, \eta)$ and $N > \frac{1}{\delta}$, we recall that
		\begin{equation}\label{eq:decr-error}
			v^W_N(0) + E_N(\delta) = \EE^0\left[\int_0^\tau \tilde{f}\big(\big\lvert X^{\sigma^{\delta}}_s\big\rvert\big)  \D s\right].
		\end{equation}
		Since $\tilde{f}_N \leq \tilde{f}$, we have
		\begin{equation}
			v^W(0) + E_N(\delta) \geq v_N^W(0) + E_N(\delta),
		\end{equation}
		and, by the definition of $v^W$,
		\begin{equation}
			\EE^0\left[\int_0^\tau \tilde{f}\big(\big\lvert X^{\sigma^{\delta}}_s\big\rvert\big)  \D s\right] \geq v^W(0).
		\end{equation}
		Combining these inequalities with \eqref{eq:decr-error}, we see that
		\begin{equation}
			v^W(0) + E_N(\delta) \geq v^W_N(0) + E_N(\delta) \geq v^W(0).
		\end{equation}
		Since the sequence $\left(v_N^W(0)\right)_{N \in \NN}$ is monotone, we can take the limit as $N \to \infty$ and find that
		\begin{equation}
			v^W(0) + E(\delta) \geq \lim_{N \to \infty}v^W_N(0) + E(\delta) \geq v^W(0)	.
		\end{equation}
		Having calculated that $\lim_{\delta \to 0}E(\delta) = 0$ in \eqref{eq:error-limit}, we have
		\begin{equation}
			v^W(0) = \lim_{N \to \infty}v^W_N(0) = V(0).
		\end{equation}
		
		Now fix $\varepsilon > 0$ and let $\sigma^\varepsilon \in \mathcal{U}$ be $\varepsilon$-suboptimal for $v^S_N(0)$. Then
		\begin{equation}
			v^W_N(0) = v^S_N(0) \leq v^S(0) \leq \EE^0 \left[\int_0^\tau \tilde f(|X^{\sigma^\varepsilon}_t|) \D t\right] \leq \liminf_{N \to \infty} \EE^0 \left[\int_0^\tau \tilde f_N(|X^{\sigma^\varepsilon}_t|) \D t\right] \leq \liminf_{N \to \infty}v^S_N(0) + \varepsilon = V(0) + \varepsilon.
		\end{equation}
		Since $\lim_{N \to \infty} v^W_N(0) = V(0)$, we conclude that $v^S(0) = v^W(0) = V(0)$.
	\end{proof}
	
	\begin{remark}\label{rem:origin-approx}
		Note that, if the growth rate of $\tilde{f}$ is such that, for any $r > 0$, $\int_0^r \tilde{f}(s) \D s = \infty$, then the error $E(\delta)$ in the proof of \Cref{prop:decr-slow-growth} is infinite for all $\delta$. Therefore the above argument does not generalise to costs with faster growth at the origin.
	\end{remark}
	
	We now consider decreasing costs with faster growth at the origin.
	
	\begin{prop}\label{prop:decr-fast-growth}
		Suppose that \Cref{ass:relaxed} holds and that there exists $\eta > 0$ such that $\tilde{f}$ is positive and decreasing on the interval $(0, \eta)$. If, for any $r > 0$,
		\begin{equation}
			\int_0^r s \tilde{f}(s) \D s = \infty,
		\end{equation}
		then $v^S(0) = v^W(0) = + \infty$.
	\end{prop}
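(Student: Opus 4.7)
The plan is to combine the explicit description of $V$ near the origin, supplied by \Cref{prop:decr-away-from-origin}, with a stopping argument that forces the cost to blow up for every admissible control.

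First, unwinding Case II of \Cref{def:candidate-value} with $r_0 = 0$, one finds that for $\abs{x} \in (0, s_0)$ only the $i = 0$ summand contributes and $\abs{x} \wedge s_0 = \abs{x}$, $\abs{x} \vee s_0 = s_0$, giving
\begin{equation}
	V(x) = 2\int_{\abs{x}}^{s_0} s \tilde{f}(s) \D s + C,
\end{equation}
where $C$ depends only on the behaviour of $\tilde{f}$ on $[s_0, R]$. Since $\tilde{f} > 0$ on $(0, s_0) \subseteq (0, \eta)$ and the hypothesis yields $\int_0^r s \tilde{f}(s) \D s = +\infty$ for every $r > 0$, monotone convergence forces $V(x) \to +\infty$ as $\abs{x} \to 0$; combined with \Cref{prop:decr-away-from-origin}, this shows $v^W(x) = v^S(x) \to +\infty$ as $x \to 0$ through $x \neq 0$.

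Next I upgrade this to $v^W(0) = +\infty$, which suffices because every strong control induces a weak one, so $v^W \leq v^S$. Fix $\PP \in \mathcal{P}_0$ and $\varepsilon \in (0, \eta \wedge s_0 \wedge R)$, and let $\tau_\varepsilon := \inf\{t \geq 0 : \abs{X_t} \geq \varepsilon\}$. By \Cref{rem:exit-time}, $\EE^\PP[\tau_\varepsilon] = \varepsilon^2 < R^2$, so $\tau_\varepsilon < \tau$ $\PP$-a.s.\ and $\abs{X_{\tau_\varepsilon}} = \varepsilon$ by continuity of paths. Splitting the cost at $\tau_\varepsilon$, the integral on $[0, \tau_\varepsilon)$ is non-negative since $\tilde{f} \geq 0$ on $(0, \eta)$ and $\abs{X_s} < \varepsilon$ there, while for the remainder a (weak) dynamic programming principle applied at $\tau_\varepsilon$ together with \Cref{prop:decr-away-from-origin} gives
\begin{equation}
	\EE^\PP\left[\int_{\tau_\varepsilon}^\tau f(X_s) \D s \,\Big|\, \overline{\mathcal{F}}_{\tau_\varepsilon}\right] \geq v^W(X_{\tau_\varepsilon}) = V(\varepsilon).
\end{equation}
Taking expectations and letting $\varepsilon \to 0$, we conclude $\EE^\PP\left[\int_0^\tau f(X_s) \D s\right] = +\infty$ for every $\PP \in \mathcal{P}_0$, whence $v^W(0) = +\infty$ and hence $v^S(0) = +\infty$ as well.

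The main obstacle is the DPP inequality above, which is not explicitly established earlier in the paper; it requires disintegrating $\PP$ along $\overline{\mathcal{F}}_{\tau_\varepsilon}$ so that the conditional laws lie in $\mathcal{P}_{X_{\tau_\varepsilon}}$, a standard consequence of the martingale-problem characterisation but one we would need to import or spell out. A clean workaround, in the spirit of \Cref{prop:decr-slow-growth}, is to approximate from below: set $\tilde{f}_N := \tilde{f} \wedge \tilde{f}(1/N)$, which is a continuous non-increasing minorant of $\tilde{f}$ satisfying \Cref{ass:rad-symm}, so \Cref{prop:radial-symmetric-value} yields $v^W_N(0) = V_N(0)$, and $v^W \geq v^W_N$ trivially. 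For any $\varepsilon \in (0, s_0)$ and $N > 1/\varepsilon$ one has $V_N(\varepsilon) = V(\varepsilon)$ and the identity $V_N(0) = V_N(\varepsilon) + \int_0^\varepsilon 2s \tilde{f}_N(s) \D s$; monotone convergence then drives the integral to $+\infty$, forcing $V_N(0) \to +\infty$ and therefore $v^W(0) = +\infty$, as claimed.
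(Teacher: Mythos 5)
Your ``workaround'' is correct and is essentially the paper's own proof, while your first (DPP) sketch has the gap you already identify, so I focus on the workaround. The paper replaces $\tilde f$ by the truncation $\tilde f_N(r) = \tilde f(1/N)$ for $r \le 1/N$ and $\tilde f_N = \tilde f$ otherwise, notes $\tilde f_N \le \tilde f$ so $v_N^W \le v^W$, invokes \Cref{prop:radial-symmetric-value} for $\tilde f_N$ to identify $v^W_N(0)$ with the expected cost under the explicit optimal control $\sigma^N$, and computes directly (via the Green's-function calculations carried over from \Cref{prop:decr-slow-growth}) that this cost contains the term $2\int_{1/N}^\eta y\tilde f(y)\,\D y \to \infty$. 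You instead identify $v_N^W(0) = V_N(0)$ via \Cref{def:candidate-value} and read off the divergent integral $\int_0^\varepsilon 2 s \tilde f_N(s)\,\D s$ from the closed-form expression. The two computations give the same quantity, so the argument is the same modulo phrasing: both are approximations from below by a sequence of regular costs for which \Cref{prop:radial-symmetric-value} applies.

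Two small points worth tightening. First, your $\tilde f_N := \tilde f \wedge \tilde f(1/N)$ is not the paper's truncation; it can also modify $\tilde f$ on $[\varepsilon, R)$ whenever $\tilde f$ exceeds $\tilde f(1/N)$ there, which would break the claimed identity $V_N(\varepsilon) = V(\varepsilon)$ unless $N$ is taken large enough that $\tilde f(1/N) > \sup_{[\varepsilon,R)}\tilde f$. Using the paper's piecewise definition, which leaves $\tilde f$ untouched on $(1/N, R)$, avoids this issue entirely and also trivially preserves the finitely-many-sign-changes condition of \Cref{ass:relaxed}. Second, the identity $V_N(0) = V_N(\varepsilon) + \int_0^\varepsilon 2 s\tilde f_N(s)\,\D s$ deserves a word: on $(0,1/N)$ the candidate is built from the \emph{radial} ODE $u'' = -2\tilde f_N$ with $u'_+(0)=0$, not the tangential ODE $w' = -2r\tilde f_N$, but since $\tilde f_N$ is constant there these two ODEs give the same increment $\tfrac{1}{N^2}\tilde f(1/N)$, which reconciles with $\int_0^{1/N} 2s\tilde f_N(s)\,\D s$.

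On your first sketch: the obstruction you flag is real. A weak DPP at the origin along the stopping time $\tau_\varepsilon$ is not available in the paper; \Cref{prop:dpp} rests on \Cref{ass:main-strengthened}, which requires $f$ bounded above, and that is precisely what fails here. A conditional-law disintegration argument along $\overline{\mathcal F}_{\tau_\varepsilon}$ could be made to work, but it is not a one-liner, and the approximation route you settle on (and the paper uses) is cleaner.
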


	\begin{proof}
		Once again define $\tilde{f}_N$, $f_N$ and $v^W_N$ as in the proof of \Cref{prop:incr-slow-growth}, for $N \in \NN$. Let $N > \frac{1}{\eta}$ and define the weak control $\sigma^{N}$ as in the proof of \Cref{prop:decr-slow-growth}, so that $\sigma^N$ is optimal for $v^W_N$. Using the calculations of the expected cost under the control $\sigma^{N}$ from the proof of \Cref{prop:decr-slow-growth}, we find that
		\begin{equation}
		\begin{split}
			v_N^W(0) = \EE^0\left[\int_0^\tau \tilde{f}_N\big(\big\lvert X^{\sigma^{N}}_s \big\rvert\big)\D s\right] & = \EE^0\left[\int_0^{\tau_\eta} \tilde{f}_N\big(\big\lvert X^{\sigma^{N}}_s \big\rvert\big)\D s\right] + \EE^0\left[\int_0^\tau \tilde{f}\big(\big\lvert X^{\sigma^{N}}_s \big\rvert\big)\ind{\abs{X^{\sigma^{N}}}\in(\eta, R)}\D s\right]\\
			& \geq \frac{1}{N^2}\tilde{f}\left(\frac{1}{N}\right) + 2\int_{\frac{1}{N}}^\eta y \tilde{f}(y) \D y + (R^2 - \eta^2) \min\left\{\tilde{f}(r) \colon r \in (\eta, R)\right\}.
		\end{split}
		\end{equation}
		By the growth condition on $\tilde{f}$, we have
		\begin{equation}
			\lim_{N \to \infty}\int_{\frac{1}{N}}^\eta y \tilde{f}(y) \D y = + \infty.
		\end{equation}
		Also, since $\tilde{f}$ is continuous on $(0, R)$, we have $\min\left\{\tilde{f}(r) \colon r \in (\eta, R)\right\} > - \infty$,
		and so
		\begin{equation}
			\lim_{N \to \infty} v_N^W(0) = + \infty.
		\end{equation}
		We conclude that
		\begin{equation}
		\begin{split}
			v^S(0) \geq v^W(0) \geq \lim_{N \to \infty}v_N^W(0) = +\infty. \qedhere
		\end{split}
		\end{equation}
	\end{proof}

	We have now fully characterised the value function for any radially symmetric cost, except for the value at the origin when the cost function is decreasing at the origin and grows at such a rate that, for any $r > 0$,
	\begin{equation}
		\int_0^r \tilde{f}(s) \D s = \infty \qandq \int_0^r s \tilde{f}(s) \D s < \infty.
	\end{equation}
	This remaining growth regime has many interesting features, which we study in detail in \cite{CoRo22}. Here we present the following partial result.
	
	\begin{prop}\label{prop:inter-growth}
		Suppose that \Cref{ass:relaxed} holds and that there exists $\eta > 0$ such that $\tilde{f}$ is positive and decreasing on the interval $(0, \eta)$. If, for any $r > 0$,
		\begin{equation}
			\int_0^r \tilde{f}(s) \D s = \infty \qandq \int_0^r s \tilde{f}(s) \D s < \infty,
		\end{equation}
		then, for $V$ equal to the candidate value function defined in \Cref{def:candidate-value},
		\begin{equation}
			v^W(0) = V(0) \in (- \infty, \infty).
		\end{equation}
		Moreover, if $d \geq 3$, then $v(0) = v^S(0) = v^W(0) = V(0)$.
	\end{prop}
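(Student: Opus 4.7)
The plan is to establish $v^W(0) = V(0)$ in all dimensions $d \geq 2$, and then, for $d \geq 3$, to add a strong approximation argument yielding $v^S(0) = V(0)$. The main obstacle is the weak upper bound: as noted in \Cref{rem:origin-approx}, the approximation scheme of \Cref{prop:decr-slow-growth} diverges in this regime, so I must construct an explicit weak control from the origin that realises $V(0)$.

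\textbf{Step 1: finiteness and lower bound on $v^W(0)$.} Under the hypothesis $\int_0^r s \tilde{f}(s) \D s < \infty$, all integrals appearing in the Case~II formula of \Cref{def:candidate-value} remain finite as $|x| \downarrow 0$, so $V$ extends continuously to the origin with $V(0) \in (-\infty, \infty)$. For the lower bound I would reuse the truncations $\tilde{f}_N(r) := \tilde{f}(r \vee \tfrac{1}{N})$ from the proof of \Cref{prop:decr-slow-growth}: since $\tilde{f}$ is decreasing near $0$, $\tilde{f}_N \leq \tilde{f}$ for $N$ large, so $v^W_N \leq v^W$, while \Cref{prop:decr-slow-growth} applied to $\tilde{f}_N$ gives $v^W_N(0) = V_N(0)$. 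Monotone convergence yields $V_N(0) \to V(0)$, hence $v^W(0) \geq V(0)$.

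\textbf{Step 2: upper bound $v^W(0) \leq V(0)$.} In dimension $d = 2$, I would invoke the weak solution $Y$ of the tangential SDE \eqref{eq:intro-tangential-sde} started from $Y_0 = 0$ constructed in \cite{fernholz_volatility_2018}. In dimension $d \geq 3$, I embed this planar solution into $\RR^d$ via $X_t := (Y_t, 0, \ldots, 0)$; a direct verification shows $\D \langle X \rangle_t = \D t$, that the induced coefficient process lies in $U$, and that $|X_t| = \sqrt{t}$ is deterministic. Concatenating this initial tangential phase (run up to the deterministic time $s_0^2$) with the optimal switching weak control from \eqref{eq:conj-optimal-control-decr} for later times produces $\overline{\sigma}^\star \in \mathcal{P}_0$. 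On $[0, s_0^2]$, the identity $V(x) = 2\int_{|x|}^{s_0} s \tilde{f}(s) \D s + c$ combined with $|X_t|^2 = t$ gives, by direct computation, $V(X_t) + \int_0^t f(X_s) \D s \equiv V(0)$; on subsequent intervals, Step~5 of the proof of \Cref{prop:radial-symmetric-value} applies verbatim to give the martingale property. Optional sampling at $\tau$ then delivers $\EE^0\big[\int_0^\tau f(X^{\overline{\sigma}^\star}_s) \D s\big] = V(0)$, hence $v^W(0) \leq V(0)$.

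\textbf{Step 3: strong formulation in $d \geq 3$.} The inequality $v^S(0) \geq v^W(0) = V(0)$ is automatic, so it remains to show $v^S(0) \leq V(0)$. For each $N \in \NN$ and $\varepsilon > 0$, consider the strong control that runs as the admissible scaled Brownian motion $X_t = B_t/\sqrt{d}$ (for which $\tfrac{1}{\sqrt{d}} I \in U$) until the exit time $\tau_N$ of $B_{1/N}(0)$, and then follows an $\varepsilon$-optimal strong control from $X_{\tau_N}$ provided by \Cref{prop:decr-away-from-origin} (glued measurably using radial symmetry and the strong Markov property). A Green's function computation for Brownian motion in a ball of $\RR^d$, after a change of variables, yields
\begin{equation}
\EE^0\left[\int_0^{\tau_N} \tilde{f}(|X_s|) \D s\right] = \frac{2d}{d-2}\int_0^{1/N}\big(u - u^{d-1} N^{d-2}\big) \tilde{f}(u) \D u \leq \frac{2d}{d-2} \int_0^{1/N} u \tilde{f}(u) \D u,
\end{equation}
which vanishes as $N \to \infty$ by the hypothesis $\int_0^r u \tilde{f}(u) \D u < \infty$. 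The restriction $d \geq 3$ is indispensable here: the factor $r^{d-1}$ in the volume form of the Green's function supplies exactly the extra factor of $u$ needed to convert the divergent $\int_0^r \tilde{f}(s) \D s$ into the convergent $\int_0^r u \tilde{f}(u) \D u$. Combined with continuity of $V$ at $0$, this gives $v^S(0) \leq V(0) + \varepsilon + o(1)$, and letting $\varepsilon \downarrow 0$ completes the argument.
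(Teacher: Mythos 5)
Your proposal is correct and follows essentially the same lines as the paper, but with a slightly different organization that is worth noting. The paper splits the result into two lemmas: for $d \ge 3$ it proves $v^S(0)=v^W(0)=V(0)$ in one stroke by the strong approximation with the scaled $d$-dimensional Brownian motion and the ball Green's function (exactly your Step~3, and the Green's function estimate you give matches the $\int_0^{1/N} u\tilde f(u)\,\mathrm du \to 0$ mechanism in the paper); for $d=2$ it proves only $v^W(0)=V(0)$ by invoking the Fernholz--Karatzas--Ruf / Larsson--Ruf weak solution of the tangential SDE started at the origin. You instead first establish $v^W(0)=V(0)$ \emph{uniformly in $d \ge 2$} by embedding the planar weak tangential solution as $(Y_t,0,\dots,0)$ in $\RR^d$ — a valid weak control since the lifted coefficient still has unit trace and the radius process $\sqrt t$ is unchanged — and only then add the strong argument for $d\ge 3$. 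This buys you two things. First, a single, dimension-independent construction of the optimal weak control from the origin, rather than deriving $v^W(0)=V(0)$ for $d\ge3$ only as a corollary of the strong result. Second, your upper bound for $v^W(0)$ is obtained by verifying the martingale property of $V(X_t)+\int_0^t f(X_s)\,\mathrm ds$ directly (deterministic on the initial tangential phase, Step~5 of \Cref{prop:radial-symmetric-value} thereafter), whereas the paper appeals to the dynamic programming principle \Cref{prop:dpp}. Since \Cref{prop:dpp} requires \Cref{ass:main-strengthened}, which in turn requires $f$ to be bounded above — a condition that fails here because $\tilde f(r)\to\infty$ as $r\to 0$ — your direct verification is arguably the cleaner route and avoids having to justify that the DPP still applies. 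Two small points that deserve a sentence in a polished write-up: the concatenation at time $s_0^2$ implicitly uses a measurable gluing of the weak continuation from a random point on the sphere $\{|x|=s_0\}$, which is licensed by radial symmetry; and the optional sampling argument on $[\tau_{s_0},\tau]$ relies on the observation that under the switching control \eqref{eq:conj-optimal-control-decr} the radius never re-enters $(0,s_0)$, so $f$ and $V$ are bounded on the relevant region.
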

	
	Note that we do not make any claim about the strong value function in dimension $d = 2$.
	We split the proof of \Cref{prop:inter-growth} into two lemmas, first proving the result for dimensions $d \geq 3$.
	
	\begin{lemma}\label{lem:inter-growth-high-dim}
		Under the conditions of \Cref{prop:inter-growth} with $d \geq 3$, we have
		\begin{equation}
			v(0) = v^S(0) = v^W(0) = V(0) \in (- \infty, \infty).
		\end{equation}
	\end{lemma}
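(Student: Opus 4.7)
The plan is to sandwich $V(0)$ between $v^W(0)$ from below (by truncating the cost) and $v^S(0)$ from above (by using a scaled $d$-dimensional Brownian warm-up). The hypothesis $d \ge 3$ will be invoked only in the warm-up estimate.

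For the lower bound, for each integer $N > 1/\eta$ I would reuse the truncated cost $\tilde f_N$ from the proof of \Cref{prop:incr-slow-growth}: $\tilde f_N \equiv \tilde f(1/N)$ on $(0, 1/N]$ and $\tilde f_N = \tilde f$ elsewhere. Since $\tilde f$ is positive and decreasing near the origin, $\tilde f_N \leq \tilde f$ pointwise, so $v^W(0) \geq v^W_N(0)$. Moreover $\tilde f_N$ satisfies \Cref{ass:rad-symm} (it is continuous at $0$ and constant on $(0, 1/N)$, hence smooth there with $r\tilde f_N'(r) \to 0$), so \Cref{prop:radial-symmetric-value} gives $v^W_N(0) = V_N(0)$, where $V_N$ is the candidate value built from $\tilde f_N$. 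The explicit form of \Cref{def:candidate-value} reduces $V(0) - V_N(0)$ to integrals of the form $\int_0^{1/N} s\bigl(\tilde f(s) - \tilde f_N(s)\bigr) \D s$, which vanish by monotone convergence using $\int_0^R s\tilde f(s) \D s < \infty$. Hence $V_N(0) \uparrow V(0) \in (-\infty, \infty)$ and $v^W(0) \geq V(0)$.

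For the upper bound, fix $\varepsilon \in (0, \eta)$ and define $\nu^\varepsilon \in \mathcal U$ by $\nu^\varepsilon_t := I/\sqrt d$ for $t < \tau_\varepsilon$ (the first exit from $B_\varepsilon(0)$), and, for $t \geq \tau_\varepsilon$, by an $\varepsilon$-optimal strong control for $v^S(X^{\nu^\varepsilon}_{\tau_\varepsilon})$; such strong controls exist by \Cref{prop:decr-away-from-origin}, and the radial symmetry of the problem lets us paste them together measurably along $\partial B_\varepsilon(0)$. On $[0, \tau_\varepsilon]$ the process $X^{\nu^\varepsilon} = B/\sqrt d$ is a scaled $d$-dimensional Brownian motion started at $0$, and a time-change to the standard Brownian motion $Y_s := \sqrt d \, X^{\nu^\varepsilon}_{s/d}$ together with the explicit Green's function of $d$-dimensional Brownian motion on the ball of radius $\sqrt d \varepsilon$ yields
\begin{equation*}
    \EE^0\left[\int_0^{\tau_\varepsilon} \tilde f(|X^{\nu^\varepsilon}_s|) \D s\right] = \frac{2d}{d-2}\int_0^\varepsilon r \tilde f(r)\bigl(1 - (r/\varepsilon)^{d-2}\bigr) \D r \leq \frac{2d}{d-2}\int_0^\varepsilon r \tilde f(r) \D r,
\end{equation*}
which vanishes as $\varepsilon \to 0$ because $\int_0^R r\tilde f(r) \D r < \infty$. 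The strong Markov property and radial symmetry of $V$ then give
\begin{equation*}
    v^S(0) \leq \EE^0\left[\int_0^{\tau_\varepsilon} \tilde f(|X^{\nu^\varepsilon}_s|) \D s\right] + \tilde V(\varepsilon) + \varepsilon,
\end{equation*}
and sending $\varepsilon \to 0$ using continuity of $V$ at $0$ (immediate from \Cref{def:candidate-value} and $\int_0^R s\tilde f(s)\D s < \infty$) delivers $v^S(0) \leq V(0)$. Combined with $V(0) \leq v^W(0) \leq v^S(0)$, this closes the sandwich.

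The hypothesis $d \geq 3$ enters only in the warm-up estimate: only in these dimensions is the Green's function of Brownian motion on $B_\varepsilon(0)$ integrable against $r \tilde f(r) \D r$ under our growth assumption, which is precisely what makes the initial Brownian warm-up cheap. In $d = 2$ the Green's function is logarithmic and this construction breaks down, which is the main obstacle preventing the present argument from treating $d = 2$ and why that case is postponed to \cite{CoRo22}.
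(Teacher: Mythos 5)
Your proposal is correct and relies on the same two core ingredients as the paper's proof: (1) truncation of the cost at $1/N$ to reduce to \Cref{prop:radial-symmetric-value} and pass to the limit, and (2) a $d$-dimensional Brownian warm-up near the origin whose cost vanishes by the Green's function identity $\EE^0[\int_0^{\tau_\varepsilon} f(B_s)\D s] = C\int_0^\varepsilon r\tilde f(r)(1-(r/\varepsilon)^{d-2})\D r \to 0$, valid only for $d \geq 3$. The packaging differs slightly: the paper runs the same truncation-plus-warm-up error computation $E_N(\delta)$ as in the proof of \Cref{prop:decr-slow-growth} (with the warm-up control replaced by a scaled $d$-dimensional Brownian motion) to get $v^W(0)=V(0)$ directly, and then extends to $v^S$, whereas you do an explicit two-sided sandwich with the strong-value upper bound obtained by concatenating the warm-up with an $\varepsilon$-optimal strong control across $\partial B_\varepsilon(0)$. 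Your route asks you to justify a measurable pasting of $\varepsilon$-optimal strong controls over $\partial B_\varepsilon(0)$ (radial symmetry makes this plausible but it is not free), a step the paper sidesteps by working with globally defined truncated controls; modulo that routine point the argument is sound, and your identification of where $d\ge 3$ is used — finiteness of $\int_0^\varepsilon r\tilde f(r)\D r$ against the $|y|^{2-d}$ Green's singularity, versus the logarithmic singularity in $d=2$ — matches the paper exactly.
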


	\begin{proof}
		In this case, we can follow the same argument as in the proof of \Cref{prop:decr-slow-growth} except that we replace the constant control 
		$\begin{bmatrix}
				e_1; & 0; & \dotsc; & 0
		\end{bmatrix}$ with $\frac{1}{d}I$, where $I$ is the $d$-dimensional identity matrix. Instead of following a one-dimensional Brownian motion at the origin, the controlled processes under $\sigma^N$ and $\sigma^\delta$ follow a scaled $d$-dimensional Brownian motion. We now verify that the approximation arguments in \Cref{prop:decr-slow-growth} hold with this change.
			
		We will use the Green's function for the $d$-dimensional Brownian motion $B$, as defined in Section 3.3 of the book \cite{morters_brownian_2010} of M\"orters and Peres. By Theorem 3.32 and 3.33 of \cite{morters_brownian_2010} and the radial symmetry of $f$, there are constants $C, C^\prime > 0$ such that, for any $\delta \in (0, \eta)$,
		\begin{equation}
		\begin{split}
			\EE^0\left[\int_0^{\tau_\delta} f(B_s) \D s\right] & = C \int_{B_\delta} \abs{y}^{2 - d} f(y) \D y = C^\prime \int_0^\delta r \tilde{f}(r) \D r \xrightarrow{\delta 
			\to 0} 0,
		\end{split}
		\end{equation}
		where the limit follows from the assumption that $\int_0^r s \tilde{f}(s) \D s < \infty$ for any $r > 0$. Hence, following the same arguments as in the proof of \Cref{prop:decr-slow-growth}, we deduce the desired result.
	\end{proof}
	
	Now suppose that $d = 2$. Note that, from the form of the Green's function for $2$-dimensional Brownian motion given in Theorem 3.34 of \cite{morters_brownian_2010}, we can see that the argument used for $d \geq 3$ is no longer valid. In the following lemma, we treat the weak control problem in dimension $d = 2$.
	
	\begin{lemma}\label{lem:inter-growth-weak}
		Under the conditions of \Cref{prop:inter-growth} with $d = 2$, the weak value function is given by
		\begin{equation}
			v^W(0) = V(0) \in (- \infty, \infty).
		\end{equation}
	\end{lemma}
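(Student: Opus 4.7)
The plan is to prove the two inequalities $v^W(0) \leq V(0)$ and $v^W(0) \geq V(0)$ by separate arguments. Note first that the growth assumption $\int_0^r s\tilde f(s)\D s < \infty$ together with the finite number of switching points from \Cref{ass:relaxed} ensures $V(0) \in \RR$: tracing through \Cref{alg:solution-decr} with $r_0 = 0$, the only integrand over a neighbourhood of $0$ appearing in $V(0)$ is $s\tilde f(s)$, which is integrable on $[0,s_0]$ by hypothesis.

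For the lower bound $v^W(0) \geq V(0)$, I would reuse the truncation scheme of \Cref{prop:decr-slow-growth}. Setting $\tilde f_N(r) := \tilde f(1/N)$ for $r \leq 1/N$ and $\tilde f_N(r) := \tilde f(r)$ otherwise, for $N$ large $\tilde f_N$ satisfies the hypotheses of \Cref{prop:decr-slow-growth} (in particular $\int_0^r \tilde f_N(s)\D s < \infty$), hence $v^W_N(0) = V_N(0)$. Since $\tilde f_N \leq \tilde f$ pointwise for $N > 1/\eta$, we have $v^W(0) \geq v^W_N(0)$. Monotonicity of $\tilde f$ near the origin together with $\int_0^r s\tilde f(s)\D s < \infty$ gives the bound $r^2\tilde f(r)/2 \leq \int_0^r s\tilde f(s)\D s \to 0$, so $N^{-2}\tilde f(1/N) \to 0$, and monotone convergence applied to $\int_{1/N}^{s_0} s\tilde f(s)\D s$ yields $\lim_N V_N(0) = V(0)$. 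Consequently $v^W(0) \geq V(0)$.

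For the upper bound $v^W(0) \leq V(0)$, the crucial input is the existence of a weak solution of the tangential SDE \eqref{eq:intro-tangential-sde} started from the origin in dimension $d = 2$, proved by Fernholz, Karatzas and Ruf in \cite{fernholz_volatility_2018}. Such a process $X$ is a continuous martingale with unit quadratic variation and deterministic radius $|X_t| = \sqrt{t}$ for $t \geq 0$, and it induces a measure $\PP_{\mathrm{tan}} \in \mathcal{P}_0$ corresponding to the control $\sigma^0$ of \Cref{def:tangential}. Concatenating this solution on $[0, s_0^2]$ with the weak control of \eqref{eq:conj-optimal-control-decr} restarted afresh at the deterministic time $s_0^2$ (at which $X_{s_0^2}$ lies on the sphere of radius $s_0$) produces a measure $\PP^\star \in \mathcal{P}_0$. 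The expected cost accrued on $[0,s_0^2]$ is $\int_0^{s_0^2}\tilde f(\sqrt{t})\D t = 2\int_0^{s_0}s\tilde f(s)\D s < \infty$, and the cost accrued after $s_0^2$ equals $V(X_{s_0^2})$, which by radial symmetry depends only on the radius $s_0$. Invoking the strong Markov property at $s_0^2$ together with \Cref{prop:decr-away-from-origin}, this contribution equals $V(s_0\hat e)$ for any unit vector $\hat e$. Summing the two contributions and matching with the telescoping structure of \Cref{alg:solution-decr} gives $\EE^{\PP^\star}\bigl[\int_0^\tau f(X_s)\D s\bigr] = V(0)$, hence $v^W(0) \leq V(0)$.

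The main obstacle specific to $d = 2$ is that the high-dimensional argument of \Cref{lem:inter-growth-high-dim}, which approximates tangential motion at the origin by scaled Brownian motion and bounds the resulting error by $C\int_0^\delta r\tilde f(r)\D r \to 0$ via the Green's function $|y|^{2-d}$, breaks down here because the Green's function of planar Brownian motion is logarithmic, and the analogous error becomes proportional to $\int_0^\delta \tilde f(s)\D s$, which diverges in the present regime (cf.\ \Cref{rem:origin-approx}). This is precisely why the argument must rely directly on the weak tangential solution from $0$ supplied by \cite{fernholz_volatility_2018}, and why the corresponding strong-value equality is not accessible by the present methods.
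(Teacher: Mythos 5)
Your two-inequality decomposition is the same as the paper's, and the lower-bound argument (truncating $\tilde f$ at $1/N$, using $v^W_N(0)=V_N(0)$ for the truncated problem and monotonicity $\tilde f_N\le\tilde f$, together with $N^{-2}\tilde f(1/N)\to 0$ and monotone convergence of $\int_{1/N}^{s_0}s\tilde f$) matches the paper's reuse of the truncation scheme. The upper bound also hinges, as in the paper, on the existence of a weak solution of the tangential SDE started from the origin in $d=2$. Where your route differs is in how the upper bound is closed out: the paper applies the dynamic programming principle (\Cref{prop:dpp}) to $v^S(0)$ and then substitutes $v^S=V$ away from the origin (\Cref{prop:decr-away-from-origin}), whereas you explicitly build a measure $\PP^\star\in\mathcal P_0$ by concatenating the weak tangential law on $[0,s_0^2]$ with the switching control $\sigma^\star$ of \eqref{eq:conj-optimal-control-decr} at the deterministic time $s_0^2$, compute its expected cost, and match with the telescoping identity $V(0)=2\int_0^{s_0}s\tilde f(s)\,\D s+V(s_0\hat e)$. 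Your concatenation argument is arguably the cleaner of the two here: the paper's appeal to \Cref{ass:main-strengthened} is delicate precisely in this growth regime, since $\tilde f$ is unbounded at the origin and so the upper-boundedness hypothesis in \Cref{ass:main} does not hold verbatim, and moreover the tangential control from the origin is only a weak control, not an element of $\mathcal U$; by working directly with a member of $\mathcal P_0$ you avoid both of these wrinkles. Two small points worth tidying: (i) the truncated $\tilde f_N$ is bounded and continuous, so you should quote \Cref{prop:radial-symmetric-value} rather than \Cref{prop:decr-slow-growth} for $v^W_N(0)=V_N(0)$; and (ii) the sentence ``the cost accrued after $s_0^2$ equals $V(X_{s_0^2})$'' should read ``the conditional expected cost accrued after $s_0^2$ equals $V(X_{s_0^2})$,'' which is what the strong Markov property and \Cref{prop:decr-away-from-origin} actually deliver. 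Finally, the paper cites \cite{larsson_relative_2020} in this proof for the weak tangential solution from the origin, while you cite \cite{fernholz_volatility_2018}; both are used elsewhere in the paper for this fact, so this is immaterial.
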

	
	\begin{proof}
		Retaining the notation of the proof of \Cref{prop:decr-fast-growth}, we have that, for any $y \in D$ with $\abs{y} = \eta$,
		\begin{equation}\label{eq:ineq-inter-growth}
		\begin{split}
			v^W(0) & \geq \lim_{N \to \infty}v^W_N(0) = V(0) = 2\int_0^\eta \xi \tilde{f}(y) \D \xi + V(y),
		\end{split}
		\end{equation}
		by \Cref{prop:radial-symmetric-value} and the definition of $V$ in \Cref{def:candidate-value}.
		
		In Theorem 4.3 of \cite{larsson_relative_2020}, Larsson and Ruf prove that, for $d = 2$, there exists a weak solution $X^{\sigma^0}$ of the SDE
		\begin{equation}\label{eq:sde-tan-rem}
			\D X_t = \sigma^0(X_t) \D B_t; \quad X_0 = 0.
		\end{equation}
		The process $X^{\sigma^0}$ follows tangential motion starting from the origin, as defined in \Cref{def:tangential}. By \Cref{lem:deterministically-increasing}, we have $\big\lvert X^{\sigma^0}_t\big\rvert = \sqrt{t}$, for any $t \geq 0$, and so
		\begin{equation}
			\EE^0 \left[\int_0^{\tau_\eta} f(X^{\sigma^0}_s)\D s\right] = \int_0^{\eta^2} \tilde{f}(\sqrt{s}) \D s = 2 \int_0^\eta \xi \tilde{f}(\xi) \D \xi.
		\end{equation}
		
		Note that \Cref{ass:main-strengthened} holds, and so we can apply the dynamic programming principle from \Cref{prop:dpp} to see that, for any $y \in D$ with $\abs{y} = \eta$,
		\begin{equation}
		\begin{split}
			v^W(0) \leq v^S(0) & \leq \EE^0\left[\int_0^{\tau_\eta} f(X^{\sigma^0}_s)\D s + v^S(X^{\sigma^0}_{\tau_\eta})\right] = 2 \int_0^\eta \xi \tilde{f}(\xi) \D \xi + V(y),
		\end{split}
		\end{equation}
		using the result of \Cref{prop:decr-away-from-origin} that $v^S = V$ away from the origin.
		
		Combining the above inequality with \eqref{eq:ineq-inter-growth}, we have
                $v^W(0) = V(0)$, as required.
	\end{proof}
	
	\begin{figure}[h]
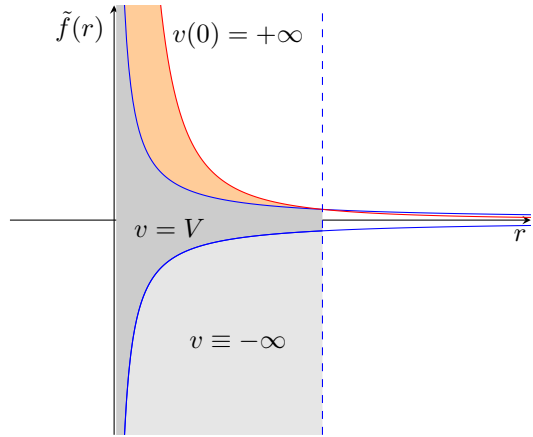

		\centering
		\vspace{1ex}
		\growth
		\vspace{1ex}
		\caption[Growth regimes for the cost function treated in \Cref{thm:value-relaxed}]{Figure showing the distinct growth regimes for the cost function in \Cref{thm:value-relaxed}, highlighting the case where, for any $r > 0$, $\int_0^r \tilde{f}(s) \D s = \infty$ and $\int_0^r s \tilde{f}(s)\D s < \infty$, as in \Cref{prop:inter-growth}.}
	\end{figure}
	
	We summarise the preceding results in the following extension of \Cref{prop:radial-symmetric-value}.
	
	\begin{thm}\label{thm:value-relaxed}
		Suppose that \Cref{ass:relaxed} is satisfied, and let $V: D \to \RR \cup \{\pm\infty\}$ be the candidate value function defined in \Cref{def:candidate-value}. Let $x \in D$ and suppose, moreover, that one of the following conditions holds:
		\begin{enumerate}[label = (\roman*)]
			\item $\tilde{f}$ is increasing on the interval $(0, \eta)$;
			\item $\tilde{f}$ is decreasing on the interval $(0, \eta)$ and $x \in D \setminus \{0\}$;
			\item $\tilde{f}$ is decreasing on the interval $(0, \eta)$, $x = 0$ and, for any $r > 0$, $\int_0^r \tilde{f}(s) < \infty$;
			\item $\tilde{f}$ is decreasing on the interval $(0, \eta)$, $x = 0$ and, for any $r > 0$, $\int_0^r s\tilde{f}(s) = \infty$;
			\item $\tilde{f}$ is decreasing on the interval $(0, \eta)$ and $d \geq 3$.
		\end{enumerate}
		Then the value function is given by
		\begin{equation}
			v(x) = v^S(x) = v^W(x) = V(x).
		\end{equation}
		Furthermore, we can determine when the value function is finite. If $\tilde{f}$ is increasing on the interval $(0, \eta)$, then
		\begin{equation}
			\begin{cases}
				v > - \infty, & \text{if} \quad \int_0^r \tilde{f}(s) \D s > - \infty \quad \text{for any} \quad r > 0,\\
				v \equiv - \infty, & \text{if} \quad \int_0^r \tilde{f}(s) \D s = - \infty \quad \text{for any} \quad r > 0.
			\end{cases}
		\end{equation}
		If $\tilde{f}$ is decreasing on the interval $(0, \eta)$, then $v(x) < \infty$ for $x \in D \setminus \{0\}$, and
		\begin{equation}
			\begin{cases}
				v(0) = \infty, & \text{if} \quad \int_0^r s\tilde{f}(s) \D s = \infty \quad \text{for any} \quad r > 0,\\
				v(0) < \infty, & \text{if} \quad d \geq 3 \quad \text{and} \quad \int_0^r s\tilde{f}(s) \D s < \infty \quad \text{for any} \quad r > 0,\\
				v(0) < \infty, & \text{if} \quad d = 2 \quad \text{and} \quad \int_0^r \tilde{f}(s) \D s < \infty \quad \text{for any} \quad r > 0.
			\end{cases}
		\end{equation}
	\end{thm}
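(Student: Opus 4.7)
The plan is to assemble the theorem directly from the propositions proved earlier in this section, since each of the five cases (i)--(v) corresponds exactly to one of \Cref{prop:incr-slow-growth,prop:decr-away-from-origin,prop:decr-slow-growth,prop:decr-fast-growth,prop:inter-growth}. There is no new mathematical content to establish; the proof is organisational, and the main (mild) obstacle is simply bookkeeping --- making sure every listed case, and every listed finiteness dichotomy, is covered without overlap or omission.

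First I would treat case (i). Here \Cref{prop:incr-slow-growth} directly gives both the equality $v = v^S = v^W = V$ (in the finite regime) and the alternative $v \equiv -\infty$ when $\int_0^r \tilde f(s)\,\D s = -\infty$, so the first clause of the finiteness statement follows. Next, for case (ii) I would invoke \Cref{prop:decr-away-from-origin}, which handles every $x \in D\setminus\{0\}$ when $\tilde f$ is decreasing near $0$ and shows that $V(x)\in(-\infty,\infty)$ there. For case (iii) I would apply \Cref{prop:decr-slow-growth}, which gives both the equality $v^S(0)=v^W(0)=V(0)$ and finiteness of $V(0)$. For case (iv), \Cref{prop:decr-fast-growth} yields $v^S(0)=v^W(0)=+\infty$, and since $V(0)$ in Case II of \Cref{def:candidate-value} also evaluates to $+\infty$ under this growth condition (the integral $\int_{r_1}^{s_1} s\tilde f(s)\,\D s$ diverges), this matches $V(0)$ and simultaneously establishes the corresponding infinite-value clause.

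Finally, for case (v), that is, the intermediate growth regime $\int_0^r \tilde f(s)\,\D s = \infty$ but $\int_0^r s\tilde f(s)\,\D s < \infty$ in dimension $d \geq 3$, I would cite \Cref{lem:inter-growth-high-dim}; this both identifies $v(0)=V(0)$ and shows that the common value is finite. To wrap up the finiteness statement in dimension $d=2$ under the condition $\int_0^r \tilde f(s)\,\D s < \infty$, note that this condition is precisely the hypothesis of \Cref{prop:decr-slow-growth}, whose conclusion gives $v(0)=V(0)<\infty$; in the remaining $d=2$ subcase where $\int_0^r \tilde f(s)\,\D s = \infty$ but $\int_0^r s\tilde f(s)\,\D s < \infty$, the weak value $v^W(0)=V(0)$ is finite by \Cref{prop:inter-growth}, but the theorem as stated does not claim equality with $v^S(0)$ here, which is consistent with the gap deliberately left open for this exceptional regime.

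Putting these pieces together, every case in the hypothesis list reduces to a direct citation, and the finiteness dichotomies follow by reading off the convergence conditions imposed in each proposition. I would therefore write the proof as a short case analysis that simply invokes \Cref{prop:incr-slow-growth,prop:decr-away-from-origin,prop:decr-slow-growth,prop:decr-fast-growth,prop:inter-growth} one by one, with a brief remark at the end noting that $V(0)=+\infty$ under the fast-growth condition follows directly from the form of $V$ in Case II of \Cref{def:candidate-value}, so that all the claimed equalities and finiteness statements hold as stated.
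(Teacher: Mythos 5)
Your proposal is correct and matches the paper's approach exactly: the paper introduces \Cref{thm:value-relaxed} with the sentence ``We summarise the preceding results in the following extension of \Cref{prop:radial-symmetric-value}'' and supplies no separate proof, so the theorem is indeed meant to be read off from \Cref{prop:incr-slow-growth,prop:decr-away-from-origin,prop:decr-slow-growth,prop:decr-fast-growth,prop:inter-growth} by the same case analysis you give. One tiny slip in your bookkeeping: for case (iv) the divergent integral in Case II of \Cref{def:candidate-value} at $x=0$ is $\int_{0}^{s_0} s\tilde f(s)\,\D s$ (since $r_0=0$), not $\int_{r_1}^{s_1} s\tilde f(s)\,\D s$, but this does not affect the conclusion that $V(0)=+\infty$.
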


	We now discuss what remains to find the strong value function under the assumptions of \Cref{prop:inter-growth} in the case $d = 2$.
	
	\begin{remark}
		Recall that, in \Cref{prop:weak-strong}, we appealed to Theorem 4.5 of El Karoui and Tan's paper \cite{el_karoui_capacities_2013-1} to show equality between weak and strong value functions, under the assumption that the cost function $f$ is upper semicontinuous and bounded above by a constant.
		
		Under the assumptions of \Cref{prop:inter-growth}, we cannot apply Theorem 4.5 of \cite{el_karoui_capacities_2013-1}, since one of the conditions of that theorem is no longer satisfied. Namely, in our setup, Theorem 4.5 of \cite{el_karoui_capacities_2013-1} is only applicable if the random variable $F_\tau := \int_0^\tau f(X_s) \D s$ is bounded above by some random variable that is uniformly integrable under the family of probability measures $\mathcal{P}_0$ defined in \Cref{sec:problem-formulation}. We show that this condition is not satisfied as follows.
		
		Let $e_1$ be the unit vector in the first coordinate direction and define $X^1$ by $X^1_t = e_1 B_t$, for $t \geq 0$. Then let $\PP^{X^1}$ be the law of the process $X^1$ and define the product measure $\PP := \PP^{X^1} \times \delta_{e_1} \in \mathcal{P}_0$. Following the same Green's function calculation as in \eqref{eq:green-incr-cost} in the proof of \Cref{prop:incr-slow-growth}, we compute that
		\begin{equation}
			\EE^{\PP}\left[\int_0^\tau f(X_s) \D s\right] = \int_0^R (R - r) \tilde{f}(r) \D r = +\infty,
		\end{equation}
		due to the growth condition on $\tilde{f}$ at the origin. Hence there does not exist any uniformly integrable upper bound on $F_\tau$ and Theorem 4.5 of \cite{el_karoui_capacities_2013-1} does not apply.
	\end{remark}

	In \Cref{lem:inter-growth-weak}, we found the weak value function at the origin by using the fact that there exists a weak solution of the SDE \eqref{eq:sde-tan-rem} describing tangential motion started from the origin. However, the SDE \eqref{eq:sde-tan-rem} has no strong solution, as we prove in \cite[Theorem 1.1]{CoRo22}. Since there exists no strong solution, we cannot follow the same argument as in the proof of \Cref{lem:inter-growth-weak} to find the strong value function. Nevertheless, we show in \cite[Theorem 4.1]{CoRo22} that the strong and weak value functions are in fact equal, using the theory of Brownian filtrations.
	
	\appendix
	\section{Dynamic programming and comparison principles}\label{app:dpp-comparison}
	
	In this appendix, we state the main results from the theory of dynamic programming and viscosity solutions that we use in the paper. The proofs of these results are fairly standard, but we have been unable to find versions of these results in the literature that completely cover the conditions required here. Full details can be found in the doctoral thesis \cite{robinson_stochastic_2020}. We require the following strengthening of \Cref{ass:main}.

	\begin{ass}\label{ass:main-strengthened}
		Suppose that \Cref{ass:main} holds and, moreover, the domain $D$ is strictly convex and the value function $v$ satisfies $v(x) > - \infty$, for any $x \in D$.
	\end{ass}
	
	\begin{prop}\label{prop:dpp}
		Suppose that \Cref{ass:main-strengthened} is satisfied. Then $v$ is continuous and the following dynamic programming principle holds. For any $x \in D$ and any stopping time $\theta$ with $\theta \in [0, \tau]$ almost surely, $v$ satisfies
		\begin{equation}\label{eq:dpp}
			v(x) = \inf_{\nu \in \mathcal{U}} \EE^x \left[\int_0^{\theta}f(X_s^{\nu}) \D s + v(X_{\theta}^{\nu})\right].
		\end{equation}
	\end{prop}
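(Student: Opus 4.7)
The plan is to prove \Cref{prop:dpp} following the general scheme for DPP in stochastic control (cf.\ Chapter 3 of \cite{touzi_optimal_2013} and \cite{el_karoui_capacities_2013-1}), adapted to our strong-formulation setting with upper semicontinuous, bounded-above costs and a strictly convex domain. I would prove the DPP and continuity of $v$ essentially in parallel, since the reverse inequality in the DPP needs continuity, while the argument for continuity is cleanest once the easy half of the DPP is in hand.

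First, the easy inequality
\begin{equation*}
	v(x) \ge \inf_{\nu \in \mathcal U}\EE^x\left[\int_0^\theta f(X^\nu_s)\D s + v(X^\nu_\theta)\right]
\end{equation*}
follows from the tower property. Fix $\nu \in \mathcal U$; conditionally on $\mathcal F_\theta$, the shifted process $(X^\nu_{\theta+t})_{t \ge 0}$ is a controlled martingale starting from $X^\nu_\theta$, driven by the Brownian motion $(B_{\theta+t}-B_\theta)_{t \ge 0}$, which is independent of $\mathcal F_\theta$. Hence the conditional expected remaining cost is at least $v(X^\nu_\theta)$, and taking $\inf_\nu$ gives the bound. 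A minor technicality is measurability of $v$, handled by showing that $v$ is universally measurable as the value of a standard controlled diffusion problem.

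Next, I would prove continuity of $v$ on $\overline{D}$. Upper semicontinuity follows by transporting a near-optimal control from $x$ to nearby points $x_n \to x$ via translation invariance of the SDE \eqref{eq:sde}, combined with upper semicontinuity of $f, g$ and the exit-time bound from \Cref{rem:exit-time} together with a Fatou argument. Lower semicontinuity is more delicate; here strict convexity of $D$ is essential, as it rules out controlled martingales lingering tangentially near $\partial D$ and forces $\tau^{x,\nu}$ to be stable in $L^1$ uniformly in $\nu$. Boundary continuity is also a consequence of strict convexity: from $x$ near $\partial D$, any controlled process exits $D$ quickly enough that the running cost is negligible and the boundary contribution is close to $g$ at the nearby exit point, using $g$ upper semicontinuous and bounded above together with the assumption $v > -\infty$ to rule out blow-up from below.

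The reverse inequality
\begin{equation*}
	v(x) \le \inf_{\nu \in \mathcal U}\EE^x\left[\int_0^\theta f(X^\nu_s)\D s + v(X^\nu_\theta)\right]
\end{equation*}
is the main obstacle. Given $\varepsilon > 0$, I would partition $\overline{D}$ into small Borel cells $(D_i)$, select representatives $y_i \in D_i$ and $\varepsilon$-optimal controls $\nu^i \in \mathcal U$ for $v(y_i)$, and paste: given $\nu \in \mathcal U$, define $\tilde \nu$ to agree with $\nu$ on $[0,\theta]$ and, on $\{X^\nu_\theta \in D_i\} \cap (\theta,\tau]$, to equal the time-shifted version of $\nu^i$ driven by $(B_{\theta+\cdot}-B_\theta)$. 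The most delicate point is verifying that $\tilde \nu$ is $\mathbb F$-progressive and that its cost factorises across $\mathcal F_\theta$; this is where the strong formulation, with a single fixed filtration, is genuinely harder than the weak formulation in which one simply glues measures. The construction leans on independence of the shifted Brownian motion from $\mathcal F_\theta$ and a standard measurable selection argument to pick the $\nu^i$ in a jointly measurable way. Continuity of $v$ then lets one replace $v(y_i)$ by $v(X^\nu_\theta)$ up to an error controlled by the mesh of the partition, and sending $\varepsilon \to 0$ and refining the partition yields the inequality.
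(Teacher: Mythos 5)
The paper does not actually contain a proof of \Cref{prop:dpp}: the appendix explicitly states that these results are ``fairly standard'' and that ``full details can be found in the doctoral thesis \cite{robinson_stochastic_2020},'' pointing to \cite{touzi_optimal_2013} and \cite{el_karoui_capacities_2013-1} as the relevant references. Your proposal follows exactly the scheme those references suggest --- conditioning via the tower property for the inequality $v(x) \ge \inf_\nu \EE^x[\int_0^\theta f + v(X^\nu_\theta)]$, a countable Borel partition with $\varepsilon$-optimal controls pasted after $\theta$ for the reverse inequality, and continuity of $v$ established in tandem using the exit-time control from \Cref{rem:exit-time} together with strict convexity of $D$. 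This is consistent with the route the paper intends, and the outline is correct.

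Two points that deserve more than a sentence in a full write-up, since they are exactly where the strong formulation bites. First, in the conditioning step you should say explicitly that, conditionally on $\mathcal F_\theta$, the shifted control $t \mapsto \nu_{\theta + t}$ is progressively measurable with respect to the natural filtration of the shifted Brownian motion $(B_{\theta+t} - B_\theta)_{t\ge 0}$ (the realised path on $[0,\theta]$ enters only as a frozen parameter, by the strong Markov property); otherwise the claim that the conditional remaining cost is $\ge v(X^\nu_\theta)$ is not immediate, because $v$ is an infimum over controls adapted to the Brownian filtration only. Second, boundary regularity of $v$ --- i.e.\ $\EE^x[\tau] \to 0$ as $x \to \partial D$ uniformly over $\nu$ --- is needed to justify $v(X^\nu_\tau) = g(X^\nu_\tau)$ and for continuity up to $\overline D$; the natural way to get this is to enclose $D$ in a ball touching $\partial D$ at the relevant boundary point and invoke the identity \eqref{eq:exit-time}, which is where strict convexity is actually used. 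These are exactly the kinds of details the paper defers to the thesis, so your proposal is at the appropriate level of resolution for this review.
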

	
	\begin{remark}\label{rem:dpp}
		If there exists an optimal control $\sigma^\star \in \mathcal{U}$, then \eqref{eq:dpp} is equivalent to stating that
		\begin{equation}
			v(X^{\sigma}_t) + \int_0^t f(X^{\sigma}_s) \D s \quad \text{is} \quad
			\begin{cases}
				\text{a submartingale,} & \text{for all} \quad \sigma \in \mathcal{U},\\
				\text{a martingale,} & \text{for} \quad \sigma = \sigma^\star.
			\end{cases}
		\end{equation}	
	\end{remark}

	\begin{prop}\label{prop:hjb-comparison}
		Suppose that \Cref{ass:main} holds and that $f: D \to \RR$ is continuous. Then we have the following comparison principle for the HJB equation
		\begin{equation}\label{eq:hjb}
		- \frac{1}{2} \inf_{\sigma \in U} \trace	 \left(D^2 v \sigma \sigma^{\top}\right) - f = 0.
		\end{equation}
		Suppose that $u \in \usc(\overline{D})$ is a viscosity subsolution of \eqref{eq:hjb}, $v \in \lsc(\overline{D})$ is a viscosity supersolution of \eqref{eq:hjb}, and $u \leq v$ on $\partial D$. Then $u \leq v$ on $\overline{D}$.
	\end{prop}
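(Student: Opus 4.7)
The plan is to argue by contradiction, combining the classical doubling-of-variables argument of Crandall and Ishii with a perturbation of $u$ that produces a \emph{strict} subsolution. This perturbation step is essential, because \eqref{eq:hjb} has no zeroth-order term in $u$, so the standard comparison theorems based on monotonicity in $u$ do not apply directly. Throughout, abbreviate the Hamiltonian by $F(x, X) := - \tfrac{1}{2}\inf_{\sigma \in U}\trace(X\sigma\sigma^\top) - f(x)$ for $x \in D$ and $X$ a symmetric $d \times d$ matrix, so that \eqref{eq:hjb} reads $F(\cdot, D^2 u) = 0$; note that $F$ is degenerate elliptic in the sense that $F(x, \cdot)$ is non-increasing in the matrix argument.

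Suppose, for contradiction, that $M := \sup_{\overline D}(u - v) > 0$. Since $u - v \in \usc(\overline D)$ and $u \leq v$ on $\partial D$, $M$ is attained at some interior point $\hat x \in D$. Set $\phi(x) := \tfrac{1}{2}(|x|^2 - C)$ with $C := \sup_{\overline D} |x|^2$, so that $\phi \leq 0$ on $\overline D$, $D^2 \phi \equiv I$, and a short argument using the openness of $D$ at $\hat x$ gives $\phi(\hat x) < 0$. For $\alpha > 0$ define $u_\alpha := u + \alpha \phi$; then $u_\alpha \in \usc(\overline D)$, $u_\alpha \leq v$ on $\partial D$, and for $\alpha$ sufficiently small $\sup_{\overline D}(u_\alpha - v) \geq M + \alpha \phi(\hat x) > 0$. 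Moreover, $u_\alpha$ is a \emph{strict} subsolution: if $\psi \in C^2$ and $u_\alpha - \psi$ has a local maximum at some $x_0 \in D$, then $u - (\psi - \alpha \phi)$ has a local maximum at $x_0$, so the subsolution property applied to $u$ gives $F(x_0, D^2 \psi - \alpha I) \leq 0$. Using $\trace(\sigma\sigma^\top) = 1$ for every $\sigma \in U$,
\begin{equation*}
	\inf_{\sigma \in U}\trace\big((D^2\psi - \alpha I)\sigma\sigma^\top\big) = \inf_{\sigma \in U}\trace\big(D^2\psi\,\sigma\sigma^\top\big) - \alpha,
\end{equation*}
and rearranging yields $F(x_0, D^2 \psi) \leq -\tfrac{\alpha}{2}$.

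Now perform the doubling of variables: for $\varepsilon > 0$, set
\begin{equation*}
	\Phi_\varepsilon(x, y) := u_\alpha(x) - v(y) - \frac{1}{2\varepsilon}|x - y|^2
\end{equation*}
on $\overline D \times \overline D$, and let $(x_\varepsilon, y_\varepsilon)$ be a maximiser. Standard arguments give $x_\varepsilon, y_\varepsilon \to \hat x$ and $|x_\varepsilon - y_\varepsilon|^2 / \varepsilon \to 0$ as $\varepsilon \to 0$, so $(x_\varepsilon, y_\varepsilon)$ lies in $D \times D$ for all $\varepsilon$ sufficiently small. The Crandall--Ishii maximum principle then produces symmetric matrices $X_\varepsilon \leq Y_\varepsilon$ with $\big((x_\varepsilon - y_\varepsilon)/\varepsilon,\, X_\varepsilon\big)$ in the closed second-order superjet of $u_\alpha$ at $x_\varepsilon$ and $\big((x_\varepsilon - y_\varepsilon)/\varepsilon,\, Y_\varepsilon\big)$ in the closed second-order subjet of $v$ at $y_\varepsilon$. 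The strict subsolution and supersolution inequalities give $F(x_\varepsilon, X_\varepsilon) \leq -\alpha/2$ and $F(y_\varepsilon, Y_\varepsilon) \geq 0$ respectively, and degenerate ellipticity together with $X_\varepsilon \leq Y_\varepsilon$ gives $F(x_\varepsilon, Y_\varepsilon) \leq F(x_\varepsilon, X_\varepsilon) \leq -\alpha/2$. Subtracting,
\begin{equation*}
	f(x_\varepsilon) - f(y_\varepsilon) = F(y_\varepsilon, Y_\varepsilon) - F(x_\varepsilon, Y_\varepsilon) \geq \tfrac{\alpha}{2},
\end{equation*}
which contradicts continuity of $f$ at $\hat x$ upon letting $\varepsilon \to 0$.

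The main obstacle is precisely the absence of a zeroth-order term in \eqref{eq:hjb}: without it, the pure doubling argument would only produce the inequality $f(x_\varepsilon) - f(y_\varepsilon) \geq 0$, which is no contradiction. The strict gap $\alpha/2$ produced by the quadratic perturbation is what drives the argument, and the calculation hinges on the normalisation $\trace(\sigma \sigma^\top) = 1$ for $\sigma \in U$, so that adding $\alpha I$ to the Hessian shifts the Hamiltonian by a constant $\alpha$ that is independent of the minimising $\sigma$.
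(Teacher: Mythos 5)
The paper itself does not prove \Cref{prop:hjb-comparison}; it defers the proof to the thesis \cite{robinson_stochastic_2020}, so a direct comparison with the paper's own argument is not possible. Your proof is, however, correct, and it follows what I would regard as the natural (and likely the intended) route. The two ingredients that are genuinely problem-specific are both handled properly: since \eqref{eq:hjb} is proper but not strictly monotone in $u$ (there is no zeroth-order term), a pure doubling argument only yields $f(x_\varepsilon) - f(y_\varepsilon) \geq 0$ and no contradiction, so a strictness perturbation is essential; and the perturbation $u \mapsto u + \alpha\phi$ with $D^2\phi = I$ shifts the Hamiltonian by exactly the constant $\alpha/2$ \emph{because} every $\sigma \in U$ satisfies $\trace(\sigma\sigma^\top) = 1$, a structural fact you explicitly exploit. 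The remaining steps — subtracting $C = \sup_{\overline D}|x|^2$ so that $\phi \le 0$ on $\overline D$ and the maximiser of $u_\alpha - v$ stays interior, passing the strict subsolution inequality to elements of the closed superjet (which is valid since $U$ is compact and $f$ is continuous, hence $F$ is continuous), invoking the Crandall--Ishii maximum principle to obtain $X_\varepsilon \le Y_\varepsilon$, and using degenerate ellipticity of $F$ in the matrix argument — are all standard and correctly executed. One small imprecision worth tidying: you write that $x_\varepsilon, y_\varepsilon \to \hat x$, but in general they converge only along a subsequence to \emph{some} maximiser of $u_\alpha - v$, not necessarily to $\hat x$ itself; this is harmless since every maximiser is interior and $f$ is continuous there, but the phrasing should reflect it.
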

	
	\begin{thm}\label{thm:unique-viscosity}
		Suppose that \Cref{ass:main-strengthened} holds, and suppose further that the domain $D$ is uniformly convex, the running cost $f$ is continuous in $D$, and the boundary cost $g$ is uniformly continuous on $\partial D$.
		
		Then the value function $v: D \to \RR$ defined in \Cref{sec:problem-formulation} extends continuously to $\overline{D}$ and is the unique viscosity solution of the HJB equation \eqref{eq:hjb} in $D$, with boundary condition $v = g$ on $\partial D$.
	\end{thm}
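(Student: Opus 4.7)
The plan is to combine \Cref{prop:dpp} and \Cref{prop:hjb-comparison}: I will first show that $v$ is a viscosity solution of \eqref{eq:hjb} in $D$, then that $v$ extends continuously to $\overline D$ with boundary value $g$, and finally invoke the comparison principle twice to conclude uniqueness.

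For the viscosity solution property I would argue in the standard way from the DPP. Given a smooth test function $\phi$ touching $v$ from above at an interior point $x_0 \in D$ (so that $v - \phi$ has a local maximum at $x_0$), I take the constant control $\nu_t \equiv \sigma$ for each $\sigma \in U$, together with the stopping time $\theta = h \wedge \tau_r$, where $\tau_r$ is the exit time from a small ball $B_r(x_0) \subset D$, and plug both into \eqref{eq:dpp}. Applying It\^o's formula to $\phi(X^\nu_\theta)$, taking expectations, dividing by $h$ and letting $h \to 0$ gives
\begin{equation*}
	f(x_0) + \tfrac{1}{2}\mathrm{trace}\bigl(D^2\phi(x_0)\sigma\sigma^\top\bigr) \geq 0
\end{equation*}
for every $\sigma \in U$, which is precisely the viscosity subsolution inequality. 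The supersolution property follows by the mirror-image argument, replacing the constant control with an $\varepsilon$-optimal choice in the DPP and sending $\varepsilon \to 0$.

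For the boundary behaviour I would exploit uniform convexity to construct a barrier. For each $x_0 \in \partial D$ with outer unit normal $n$, uniform convexity provides $c > 0$ (independent of $x_0$) such that $(x - x_0)\cdot n \leq -c|x-x_0|^2$ for every $x \in \overline D$. Define
\begin{equation*}
	\psi(x) := -\tfrac{1}{c}(x-x_0)\cdot n - |x-x_0|^2.
\end{equation*}
Then $\psi \geq 0$ on $\overline D$, $\psi(x_0) = 0$, and $D^2\psi = -2I$, so that $\tfrac{1}{2}\mathrm{trace}(D^2\psi\,\sigma\sigma^\top) = -1$ for every $\sigma \in U$. It\^o's formula applied to $\psi(X^\nu_\tau)$, combined with $\psi \geq 0$, then yields the \emph{uniform} exit-time estimate
\begin{equation*}
	\sup_{\nu \in \mathcal{U}} \EE^x[\tau] \leq \psi(x),
\end{equation*}
whose right-hand side tends to $0$ as $x \to x_0$. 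Using the martingale identity $\EE^x[|X^\nu_\tau - x|^2] = \EE^x[\tau]$, it follows that $\sup_\nu \EE^x[|X^\nu_\tau - x_0|] \to 0$ as $x \to x_0$; boundedness of $f$ and uniform continuity of $g$ then deliver $v(x) \to g(x_0)$, so $v$ extends continuously to $\overline D$ with $v = g$ on $\partial D$.

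With these two ingredients, uniqueness follows at once: any other continuous viscosity solution $u$ of \eqref{eq:hjb} in $D$ with $u = g$ on $\partial D$ must satisfy $u \leq v$ and $v \leq u$ by two applications of \Cref{prop:hjb-comparison}, hence $u \equiv v$. The main subtlety is the uniformity (over all admissible controls) of the exit-time estimate: a strictly convex but not uniformly convex domain would not admit the quadratic barrier $\psi$ above, so a more delicate construction would be required. This is precisely where the uniform convexity hypothesis enters the argument.
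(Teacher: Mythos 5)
The paper itself does not prove \Cref{thm:unique-viscosity}: it is stated in \Cref{app:dpp-comparison} with the explicit caveat that ``the proofs of these results are fairly standard'' and that ``full details can be found in the doctoral thesis [\cite{robinson_stochastic_2020}].'' There is therefore no in-paper argument to compare against; your task was effectively to reconstruct the intended standard proof, which you have done. The three-stage structure (DPP $\Rightarrow$ viscosity sub- and supersolution properties; uniform convexity $\Rightarrow$ quadratic barrier $\Rightarrow$ uniform exit-time estimate and boundary continuity; two applications of \Cref{prop:hjb-comparison} $\Rightarrow$ uniqueness) is exactly what one expects, and your remark that strict convexity alone would not support the barrier $\psi$ correctly explains why the theorem carries the stronger uniform-convexity hypothesis even though \Cref{prop:hjb-comparison} does not.

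Two steps are stated more briskly than they can be carried out. For the supersolution property, the $\varepsilon$-optimal control supplied by the DPP depends on both $\varepsilon$ and the time horizon $h$, so one cannot simply ``send $\varepsilon \to 0$''; after applying It\^o's formula to the test function $\phi$ one must first bound $\trace\left(D^2\phi(X_s)\nu_s\nu_s^\top\right)$ from below by the control-free quantity $\inf_{\sigma\in U}\trace\left(D^2\phi(X_s)\sigma\sigma^\top\right)$, which is continuous in $X_s$, and only then divide by $h$ and take the limit. For the boundary step, \Cref{ass:main} gives $f$ bounded \emph{above} only, so the bound $\int_0^\tau f\,\D s \leq M\tau$ controls just one side; to conclude $v(x)\to g(x_0)$ from below as well you need either $f$ bounded on $\overline{D}$ (which holds in the radially symmetric applications of the paper) or to combine the assumed lower bound $v>-\infty$ with the uniform exit-time estimate. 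Neither point changes the architecture of your argument, but both require a line or two beyond what you have written.
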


	\bibliographystyle{abbrv}
	\bibliography{bibliography}
\end{document}